\definecolor{citegreen}{rgb}{0,0.6,0}
\definecolor{refred}{rgb}{0.8,0,0}
\newtheorem{theorem}{Theorem}[section]
\newtheorem{lemma}[theorem]{Lemma}
\newtheorem{proposition}[theorem]{Proposition}
\newtheorem{example}[theorem]{Example}
\theoremstyle{definition}
\newtheorem{definition}[theorem]{Definition}
\newtheorem{assumption}[theorem]{Assumption}
\theoremstyle{remark}
\newtheorem{remark}[theorem]{Remark}
\numberwithin{equation}{section}
\def\eps{\varepsilon}
\def\R{\mathbb R}
\def\R{{{\mathbb R}}}
\newcommand{\dd}{ \mathrm{d}}
\newcommand{\intbar}{\etaathop{\int\etaakebox(-13.5,0){\rule[4pt]{.7em}{0.3pt}}
\kern-6pt}\nolimits}
\newcommand{\be}{\begin{equation}}
\newcommand{\ee}{\end{equation}}
\newcommand{\bea}{\begin{equation*}}
\newcommand{\eea}{\end{equation*}}
\newcommand{\beq}{\begin{equation}}
\newcommand{\eeq}{\end{equation}}
\newcommand{\cA}{\mathcal{A}}
\newcommand{\cC}{\mathcal{C}}
\newcommand{\cD}{\mathcal{D}}
\newcommand{\cG}{\mathcal{G}}
\newcommand{\cH}{\mathcal{H}}
\newcommand{\cI}{\mathcal{I}}
\newcommand{\cK}{\mathcal{K}}
\newcommand{\cL}{\mathcal{L}}
\newcommand{\cP}{\mathcal{P}}
\newcommand{\cX}{\mathcal{X}}
\newcommand{\cY}{\mathcal{Y}}
\newcommand{\bR}{\mathbb{R}}
\newcommand{\bZ}{\mathbb{Z}}
\newcommand{\bfH}{\mathbf{H}}
\newcommand{\ip}[2]{\left \langle #1,#2 \right \rangle}
\title{Well-posedness of a Hamilton-Jacobi-Bellman equation in the strong coupling regime}
\author{Serena Della Corte\thanks{Delft Institute of Applied Mathematics, Delft University of Technology, Mekelweg 4, 2628 CD Delft, The Netherlands. \emph{E-mail address}: s.dellacorte@tudelft.nl} \qquad Richard C. Kraaij\thanks{Delft Institute of Applied Mathematics, Delft University of Technology, Mekelweg 4, 2628 CD Delft, The Netherlands. \emph{E-mail address}: r.c.kraaij@tudelft.nl}}
\date{\today}
\begin{document}

\maketitle
\begin{abstract}
We prove comparison principle for viscosity solutions of a Hamilton--Jacobi--Bellman equation in a strong coupling regime considering a stationary and a time-dependent version of the equation. We consider a Hamiltonian that has a representation as the supremum of a difference of two functions: an internal Hamiltonian depending on a control variable and a function interpreted as a cost of applying the controls.
Our major innovation lies in the use of a cost function that can be discontinuous, unbounded and depending on momenta, enabling us to address previously unexplored scenarios such as cases arising from the theory of large deviations and homogenisation.
For completeness, we also state the existence of viscosity solutions and we verify the assumptions for an example arising from biochemistry.

\smallskip
\noindent\textbf{Keywords}\emph{:  Hamilton--Jacobi--Bellman equations, comparison principle, viscosity solutions, optimal control theory, Large deviations;} 

%
%
\end{abstract}



\section{Introduction}
In the present work we study well--posedness of the Hamilton--Jacobi--Bellman equation on a subset $E\subseteq\R^d$, 
\begin{align}\label{eq:Hamilton-Jacobi-Bellman}
    u(x)-\lambda \cH(x,\nabla u(x))=h(x),
\end{align}
where $\lambda$ is a positive constant and $h$ is a continuous bounded function,
and for the time--dependent version
\begin{gather}\label{eq:time-dep-HJB}
    \begin{cases}
        \partial_t u(x,t) - \cH(x,\nabla_x u(t,x)) = 0, & \text{if $t>0$,}\\
        u(0,x)=u_0(x) & \text{if $t=0$.}
    \end{cases}
\end{gather}
In the entire work we consider a Hamiltonian of the type
\begin{equation}
    \label{def:hamiltonian}
    \cH(x,p)=\sup_{\theta\in\Theta}\left[\Lambda(x,p,\theta) - \cI(x,p,\theta)\right].
\end{equation}
The main goal of this work is to prove the \textit{comparison principle} for viscosity solutions of the above equations \eqref{eq:Hamilton-Jacobi-Bellman} and \eqref{eq:time-dep-HJB}, implying also the uniqueness of solutions.
Comparison principle for viscosity solutions has been largely studied in the past years with an increasingly complex Hamiltonian. Above all, we mention \cite{BaCD97} and \cite{DLLe11} for a proof of comparison principle for equations arising from \textit{optimal control}  problems and \cite{BuDuGa18}, \cite{KuPo17} for Hamiltonians coming from the theory of large deviations for Markov processes.
In these settings, the standard assumptions used to obtain the comparison principle are usually either the \textit{modulus continuity} of $\cH$ i.e.
\begin{equation}
    \vert \cH(x,p)- \cH(y,p)\vert \leq \omega(|x-y|(1+|p|)),
\end{equation}
or uniformly coercivity of $\cH$, that is
\begin{equation}
    \sup_{x\in K} \cH(x,p) \to \infty \qquad \text{if $|p|\to \infty$.}
\end{equation}
In the case in which $\cH$ is in a variational representation as in \eqref{def:hamiltonian}, the above assumptions can be derived from conditions on $\Lambda$ and $\cI$ such that coercivity or pseudo-coercivity of $\Lambda$ and regularity and boundedness of the cost function $\cI$.
However, there is a wide class of examples violating the above assumptions. In particular, this is the case of Hamiltonians arising in the study of systems with multiple time--scales (see for example \cite{BuDuGa18}).

More recently, in \cite{KrSc21} the authors prove well--posedness for viscosity solutions of a general Hamilton--Jacobi--Bellman equation that can be applied in many of the above contexts. It is proved comparison principle under more generic and weaker assumptions then the common ones explained above. To be more precised, the authors in \cite{KrSc21} prove for the first time comparison principle for an Hamilton--Jacobi--Bellman equation with Hamiltonian of the type \eqref{def:hamiltonian} with $\Lambda$ that can be non coercive, non pseudo--coercive and non Lipschitz and $\cI$ that can be unbounded and discontinuous, but not depending on momenta $p$. Our work can then be seen as an extension of the above mentioned work as we introduce a cost function $\cI$ depending on momenta $p$. The introduction of the momenta in the function $\cI$ makes on one hand the setting even more general including examples arising from problems in homogenisation theory that could not be treated before, and on the other hand the Hamiltonian more difficult to treat as it takes into account contributions from both parts $\Lambda$ and $\cI$. For this reason, it is necessary a change of the starting assumptions based on the difference $\Lambda - \cI$ and not on the two separate functions. 

In section \ref{section:clarify-assumptions} we explain in more details how our work includes all the examples previously covered in \cite{BaCD97} and \cite{KrSc21}; in Section \ref{section:verification-for-examples-of-Hamiltonians} we give an extra example extending one of the examples presented in \cite{Po18}.
\smallskip

In the following we present a concise overview of our strategy, without delving deep into specific details.

Proving comparison principle one usually wants to bound the difference between subsolution and supersolution $\sup_E u_1- u_2$ by using a doubling variables procedure and typically ends up with an estimate of the following type 
\begin{align}
		\sup_E(u_1-u_2) \leq &\lambda \liminf_{\varepsilon\to 0}\liminf_{\alpha\to\infty} \left[\mathcal{H}\left(x_{\alpha,\eps},\dd_x \frac{\alpha}{2}d^2(x_{\alpha,\eps},y_{\alpha,\eps})\right) - \mathcal{H}\left(y_{\alpha,\eps}, - \dd_y \frac{\alpha}{2}d^2(x_{\alpha,\eps},y_{\alpha,\eps})\right)\right] \\ &+ \sup_{E}(h_1 - h_2).
  \label{eq:intro:estimate}
\end{align}
Therefore, the aim is usually to bound the difference of Hamiltonians in two sequences of points, $x_{\alpha,\eps}$ and $y_{\alpha,\eps}$, obtained as optimizers in the doubling variables procedure, and corresponding momenta $p^1_{\alpha,\eps}=\dd_x \frac{\alpha}{2}d^2(x_{\alpha,\eps},y_{\alpha,\eps})= - \dd_y \frac{\alpha}{2}d^2(x_{\alpha,\eps},y_{\alpha,\eps}) = p^2_{\alpha,\eps}$. 

Unlike the approach taken in \cite{KrSc21} and in \cite{BaCD97}, where $\Lambda$ and $\cI$ were worked on independently, in the strong coupling regime, where $\cI$ depends on $p$, we need to consider their difference as a single function. Indeed, we give assumptions on $\Lambda-\cI$. Our main assumptions are as follows:
\begin{itemize}
    \item Firstly, we rely on the \textit{continuity estimate} of $\Lambda - \cI$ that is morally the comparison principle for $\Lambda - \cI$ for fixed $\theta$. Indeed, it enables us to control the difference of $\cH$ in \eqref{eq:intro:estimate} by managing the difference of $\left(\Lambda(x_{\alpha,\eps},p^1_{\alpha,\eps},\theta_{\alpha,\eps})-\cI(x_{\alpha,\eps},p^1_{\alpha,\eps},\theta_{\alpha,\eps})\right) - \left(\Lambda(y_{\alpha,\eps},p_{\alpha,\eps}^2,\theta_{\alpha,\eps})-\cI(y_{\alpha,\eps},p^2_{\alpha,\eps},\theta_{\alpha,\eps})\right)$ for $\theta_{\alpha,\eps}$ optimizing $\cH$ and well chosen as explained in the next point.
    See Assumption \ref{assumption:regularity:Lambda-I} \ref{item:assumption:regularity:continuity_estimate} for the rigorous notions.
    \item  In order to get comparison for the Hamilton--Jacobi-Bellman equation in terms of $\cH$ by the continuity estimate for $\Lambda-\cI$, we also need to control the $\theta_{\alpha,\eps}$. For this reason, we assume the compactness of the level sets of $\cI - \Lambda$. Using this assumption we are indeed able to prove that the above sequence $\theta_{\alpha,\eps}$ is relatively compact, i.e. \ref{item:def:continuity_estimate:3}. 
    This assumption is made rigorous in Assumption \ref{assumption:regularity:Lambda-I} \ref{item:assumption:compact-sublevelsets}.
    \item We assume  $\Gamma$--convergence for $\cI-\Lambda$ to prove regularity of $\cH$. This assumption is typically true for the most treated examples, e.g. when $\Lambda$ and $\cI$ are continuous or when $\cI$ arises as a Donsker--Varadhan functional (see \cite{DoVa75a}). See Assumption \ref{assumption:regularity:Lambda-I} \ref{item:assumption:Gamma-convergence}.
\end{itemize}

To complete the well-posedness study of the Hamilton--Jacobi--Bellman equations \eqref{eq:Hamilton-Jacobi-Bellman} and \eqref{eq:time-dep-HJB}, we also state the existence of viscosity solutions for equation \eqref{eq:Hamilton-Jacobi-Bellman} in Theorem \ref{theorem:existence_of_viscosity_solution} and we mention of our ongoing work towards a new improved proof of the existence of solutions for \eqref{eq:time-dep-HJB}. To prove Theorem \ref{theorem:existence_of_viscosity_solution}, the main ingredient is to establish the existence of the differential inclusion in terms of $\partial_p \cH$. Moreover, we need to make sure that the solutions to the differential inclusions remains inside our set $E$. To this aim, we add Assumption \ref{assumption:Hamiltonian_vector_field}.

Our work is then structured as follows:
In Section \ref{section:results} we firstly give some preliminaries and an overview of the general setting and secondly we state our main results, Theorems \ref{theorem:comparison_principle_variational} and \ref{theorem:existence_of_viscosity_solution}, and the assumptions needed to prove them. In Section \ref{section:clarify-assumptions} we give a list of examples showing that our method is able to include examples treated in previous works as well as new examples that were previously beyond the scope of their applicability. Then, we prove continuity of the Hamiltonian in Section \ref{section:regularity-of-H}. In Section \ref{section:comparison_principle} we give the proof of comparison principle and we state the existence of solutions. Finally, in Section \ref{section:verification-for-examples-of-Hamiltonians} we treat an example to show that our assumptions are well-posed. 

\medskip

\textbf{Acknowledgement.}
The authors are supported by The Netherlands Organisation for Scientific Research (NWO), grant number 613.009.148 . 

\section{General setting and main results}
\label{section:results}
In this section, we firstly give some notions and definitions used throughout all the paper. Then, we proceed with the assumptions needed for the statement of our main results given in Section \ref{section:results:HJ-of-Perron-Frobenius-type}.
\subsection{Preliminaries} \label{section:preliminaries}
For a Polish space $\cX$ we denote by $C(\cX)$ and $C_b(\cX)$ the spaces of continuous and bounded continuous functions respectively. We denote $C_l(\cX)$ and $C_u(\cX)$ the spaces of lower bounded continuous and upper bounded continuous functions respectively. If $\cX \subseteq \bR^d$ then we denote by $C_c^\infty(\cX)$ the space of smooth functions that vanish outside a compact set. We denote by $C_{cc}^\infty(\cX)$ the set of smooth functions that are constant outside of a compact set in $\cX$, and by $\cP(\cX)$ the space of probability measures on $\cX$. We equip $\cP(\cX)$ with the weak topology induced by convergence of integrals against bounded continuous functions.

\smallskip

Throughout the paper, $E$ will be the set on which we base our Hamilton-Jacobi equations. We assume that $E$ is a subset of $\bR^d$ that is a Polish space which is contained in the $\bR^d$ closure of its $\bR^d$ interior. This ensures that gradients of functions are determined by their values on $E$. Note that we do not necessarily assume that $E$ is open. We assume that the space of controls $\Theta$ is Polish.

\smallskip

We next introduce \textit{viscosity solutions} for an Hamilton-Jacobi-Bellman equation $f-\lambda Af=h$ and for the time-dependent version $f_t -  A f = 0$. 
\begin{definition}[Viscosity solutions for the stationary equation] \label{definition:viscosity_solutions}
	Let $A_\dagger : \cD(A_\dagger) \subseteq C_l(E) \to C_b(E)$ be an operator with domain $\mathcal{D}(A_\dagger)$, $\lambda > 0$ and $h_\dagger \in C_b(E)$. Consider the Hamilton-Jacobi equation
	\begin{equation}
		f - \lambda A_\dagger f = h_\dagger. \label{eqn:differential_equation} 
	\end{equation}
	We say that $u$ is a \textit{(viscosity) subsolution} of equation \eqref{eqn:differential_equation} if $u$ is bounded from above, upper semi-continuous and if, for every $f \in \cD(A)$ there exists a sequence $x_n \in E$ such that
	\begin{gather*}
		\lim_{n \uparrow \infty} u(x_n) - f(x_n)  = \sup_x u(x) - f(x), \\
		\limsup_{n \uparrow \infty} u(x_n) - \lambda A_\dagger f(x_n) - h_\dagger(x_n) \leq 0.
	\end{gather*}
 Let $A_\ddagger: \cD(A_\ddagger) \subseteq C_u (E) \to C_b (E)$ be an operator with domain $\mathcal{D}(A_\ddagger)$, $\lambda > 0$ and $h_\ddagger \in C_b(E)$. Consider the Hamilton-Jacobi equation
	\begin{equation}
		f - \lambda A_\ddagger f = h_\ddagger. \label{eqn:differential_equation-ddagger} 
	\end{equation}
	We say that $v$ is a \textit{(viscosity) supersolution} of equation \eqref{eqn:differential_equation-ddagger} if $v$ is bounded from below, lower semi-continuous and if, for every $f \in \cD(A)$ there exists a sequence $x_n \in E$ such that
	\begin{gather*}
		\lim_{n \uparrow \infty} v(x_n) - f(x_n)  = \inf_x v(x) - f(x), \\
		\liminf_{n \uparrow \infty} v(x_n) - \lambda A_\ddagger f(x_n) - h_\ddagger(x_n) \geq 0.
	\end{gather*}
	We say that $u$ is a \textit{(viscosity) solution} of the set of equations \eqref{eqn:differential_equation} and \eqref{eqn:differential_equation-ddagger}, if it is both a subsolution of \eqref{eqn:differential_equation} and a supersolution of \eqref{eqn:differential_equation-ddagger}.
 \end{definition}

 \begin{definition}[Viscosity solutions for the time-dependent equation]
    Let $A_\dagger : \cD(A_\dagger) \subseteq C_l(E) \to C_b(E)$ be an operator with domain $\mathcal{D}(A_\dagger)$. Consider the Hamilton-Jacobi equation with the initial value,
    \begin{gather}
        \begin{cases}
            \partial_t u(t,x) - A_\dagger u(t,\cdot)(x)  = 0, & \text{if } t > 0, \\
            u(0,x) = u_0(x) & \text{if } t = 0,
        \end{cases} \label{eqn:HJ_def_subsolution} \\
    \end{gather}
    Let $T>0$, $f\in D(A_\dagger)$ and $g\in C^1([0,T])$ and let $F_\dagger(x,t): E\times [0,T] \to \R$ be the function 
    \begin{equation}
    F_\dagger (x,t) = 
    \begin{cases}
        \partial_t g(t) - A_\dagger f(x) & \text{if $t>0$} \\
        \left[\partial_t g(t) - A_\dagger f(x) \right] \wedge \left[u(t,x)-u_0(x) \right] & \text{if $t=0$.}
    \end{cases}
    \end{equation}
       We say that $u$ is a \textit{(viscosity) subsolution} for \eqref{eqn:HJ_def_subsolution} if for any $T > 0$ any $ f \in D(A_\dagger)$ and any $g\in C^1([0,T])$ there exists a sequence $(t_n,x_n) \in [0,T] \times E$ such that
        \begin{align*}
        & \lim_{n \uparrow \infty} u(t_n,x_n) - f(x_n) -g(t_n) = \sup_{t\in[0,T],x} u(t,x) - f(x) - g(t), \\
        & \limsup_{n \uparrow \infty} F_\dagger (x_n,t_n) \leq 0.
        \end{align*}

    Let $A_\ddagger : \cD(A_\ddagger) \subseteq C_u(E) \to C_b(E)$ be an operator with domain $\mathcal{D}(A_\ddagger)$. Consider the Hamilton-Jacobi equation with the initial value,
    \begin{gather}
        \begin{cases}
            \partial_t u(t,x) - A_\ddagger u(t,\cdot)(x)  = 0, & \text{if } t > 0, \\
            u(0,x) = u_0(x) & \text{if } t = 0,
        \end{cases} \label{eqn:HJ_def_supersolution} \\
    \end{gather}
    Let $T>0$, $f\in D(A_\ddagger)$ and $g\in C^1([0,T])$ and let $F_\ddagger(x,t): E\times [0,T] \to \R$ be the function 
    \begin{equation}
    F_\ddagger (x,t) = 
    \begin{cases}
        \partial_t g(t) - A_\dagger f(x) & \text{if $t>0$} \\
        \left[\partial_t g(t) - A_\dagger f(x) \right] \vee \left[u(t,x)-u_0(x) \right] & \text{if $t=0$.}
    \end{cases}
    \end{equation}
        
         We say that $v$ is a viscosity supersolution for \eqref{eqn:HJ_def_supersolution} if for any $T > 0$ any $f \in D(A_\ddagger)$ and $g\in C^1([0,T])$ there exists a sequence $(t_n,x_n) \in [0,T] \times E$ such that
        \begin{align*}
        & \lim_{n \uparrow \infty} u(t_n,x_n) - f(x_n) -g(t_n) = \inf_{t\in[0,T],x} u(t,x) - f(x) - g(t), \\
        & \liminf_{n\uparrow} F_\ddagger (x_n,t_n) \geq 0
        \end{align*}
        We say that $u$ is a \textit{(viscosity) solution} of the set of equations \eqref{eqn:HJ_def_subsolution} and \eqref{eqn:HJ_def_supersolution},  if it is both a subsolution of \eqref{eqn:HJ_def_subsolution} and a supersolution of \eqref{eqn:HJ_def_supersolution}.
\end{definition}
\begin{remark} \label{remark:existence of optimizers}
	Consider the definition of subsolutions for $f-\lambda A f= h$. Suppose that the testfunction $f \in \cD(A)$ has compact sublevel sets, then instead of working with a sequence $x_n$, there exists $x_0  \in E$ such that
	\begin{gather*}
		u(x_0) - f(x_0)  = \sup_x u(x) - f(x), \\
		u(x_0) - \lambda A f(x_0) - h(x_0) \leq 0.
	\end{gather*}
	A similar simplification holds in the case of supersolutions and in the case of the time-dependent equation $\partial_t f - A f = 0$.
\end{remark}
\begin{remark}
	For an explanatory text on the notion of viscosity solutions and fields of applications, we refer to~\cite{CIL92}.
\end{remark}

\begin{remark}
	At present, we refrain from working with unbounded viscosity solutions as we use the upper bound on subsolutions and the lower bound on supersolutions in the proof of Theorem \ref{theorem:comparison_principle_variational}. We, however, believe that the methods presented in this paper can be generalized if $u$ and $v$ grow slower than the containment function $\Upsilon$ that will be defined below in Definition \ref{def:containment-new}.
\end{remark}
 
We give now the definition of \textit{comparison principle} leading to a  uniqueness notion as stated in Remark \ref{remark:uniqueness}.
 \begin{definition}[Comparison Principle]
For two operators $A_\dagger, A_\ddagger \subseteq C(E) \times C(E)$, we say that the comparison principle holds if for any viscosity subsolution $u$ of $f - \lambda A_\dagger f = h_1$ (resp. $\partial_t f - A_\dagger f = 0$) and viscosity supersolution $v$ of $f - \lambda A_\ddagger f = h_2$ (resp. $\partial_t f -  A_\ddagger f = 0$), $\sup_x \left\{u(x) - v(x)\right\} \leq \sup_x \{h_1 (x) - h_2 (x)\}$ (resp. $\sup_{t\in [0,T],x} \left\{u(t,x) - v(t,x)\right\} \leq \sup_x \{ u(0,x)- v(0,x)\}$ for all $T>0$ ) holds on $E$.
\end{definition}
\begin{remark}[Uniqueness]\label{remark:uniqueness}
	If $u$ and $v$ are two viscosity solutions of~\eqref{eqn:differential_equation} or \eqref{eqn:HJ_def_subsolution}, then we have $u\leq v$ and $v\leq u$ by the comparison principle, and, hence, uniqueness of solutions.
\end{remark}

\subsection{Main results: comparison and existence}
\label{section:results:HJ-of-Perron-Frobenius-type}
In this section, we state our main results: the comparison principle, that is Theorem \ref{theorem:comparison_principle_variational}, and existence of solutions in Theorem \ref{theorem:existence_of_viscosity_solution}.
\smallskip

Consider the variational Hamiltonian $\cH : E \times \bR^d \rightarrow  \bR$ given by
\begin{equation}\label{eq:results:variational_hamiltonian}
	\mathcal{H}(x,p) = \sup_{\theta \in \Theta}\left[\Lambda(x,p,\theta) - \mathcal{I}(x,p,\theta)\right].
\end{equation}
The precise assumptions on the maps $\Lambda$ and $\mathcal{I}$ are formulated in Section~\ref{section:assumptions}. 
\begin{theorem}[Comparison principle]
	\label{theorem:comparison_principle_variational}
	Consider the map $\cH : E \times \bR^d \rightarrow \bR$ as in \eqref{eq:results:variational_hamiltonian}. Suppose that Assumption~\ref{assumption:regularity:Lambda-I} is satisfied.
	Define the operator $\bfH f(x) := \cH(x,\nabla f(x))$ with domain $\cD(\bfH) = C_{cc}^\infty(E)$. Then:
	\begin{enumerate}[(a)]
		\item \label{item:theorem-comparison} For any $h \in C_b(E)$ and $\lambda > 0$, the comparison principle holds for
		\begin{equation}\label{eq:results:HJ-eq}
			f - \lambda \, \bfH f = h.
		\end{equation}
  \item For any $f_0 \in C_b(E)$, the comparison principle holds for 
  \begin{gather}
      \begin{cases}
          \partial_t f(t,x) - \bfH f(t,\cdot)(x) = 0, & \text{if $t>0$}\\
          f(0,x)=f_0(x) &\text{if $t=0$.}
      \end{cases}
  \end{gather}
\end{enumerate}
\end{theorem}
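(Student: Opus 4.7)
The plan for part (a) is the classical doubling-of-variables procedure, regularised by the containment function $\Upsilon$ of Definition \ref{def:containment-new}. Fix a subsolution $u$ of $f - \lambda \bfH f = h_1$ and a supersolution $v$ of $f - \lambda \bfH f = h_2$. For $\alpha > 0$ and $\eps > 0$ we look at the maximisers $(x_{\alpha,\eps}, y_{\alpha,\eps}) \in E \times E$ of
\[
\Phi_{\alpha,\eps}(x,y) = u(x) - v(y) - \tfrac{\alpha}{2} d^2(x,y) - \eps\bigl[\Upsilon(x) + \Upsilon(y)\bigr],
\]
whose existence is forced by the compact sublevel sets supplied by $\Upsilon$. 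Feeding the test functions $y \mapsto \tfrac{\alpha}{2}d^2(x_{\alpha,\eps},y) + \eps\Upsilon(y)$ and $x \mapsto -\tfrac{\alpha}{2}d^2(x,y_{\alpha,\eps}) - \eps\Upsilon(x)$ into the sub- and supersolution inequalities (via Remark \ref{remark:existence of optimizers} and a standard cut-off so as to remain inside $C^\infty_{cc}(E)$) yields the estimate \eqref{eq:intro:estimate}, with $p^i_{\alpha,\eps} = \alpha(x_{\alpha,\eps}-y_{\alpha,\eps}) \pm \eps\nabla\Upsilon(\cdot)$. The usual argument shows $\alpha\, d^2(x_{\alpha,\eps},y_{\alpha,\eps}) \to 0$ as $\alpha \to \infty$ at each fixed $\eps$, and both optimiser sequences stay inside a compact subset of $E$.

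The core of the proof is bounding the Hamiltonian difference in \eqref{eq:intro:estimate}. For each $(\alpha,\eps)$ and $\delta > 0$, we pick $\theta_{\alpha,\eps} \in \Theta$ that is $\delta$-nearly optimal for $\cH(x_{\alpha,\eps}, p^1_{\alpha,\eps})$ in the variational formula \eqref{eq:results:variational_hamiltonian}. Since $u - v$ is bounded from above and $h_1 - h_2$ is bounded, $\cH(x_{\alpha,\eps}, p^1_{\alpha,\eps})$ is bounded above along the sequence, and hence $\cI(x_{\alpha,\eps}, p^1_{\alpha,\eps}, \theta_{\alpha,\eps}) - \Lambda(x_{\alpha,\eps}, p^1_{\alpha,\eps}, \theta_{\alpha,\eps})$ is bounded from above; the compactness hypothesis \ref{assumption:regularity:Lambda-I}\,\ref{item:assumption:compact-sublevelsets} then forces $\{\theta_{\alpha,\eps}\}$ to be relatively compact. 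Using the trivial lower bound $\cH(y_{\alpha,\eps},p^2_{\alpha,\eps}) \geq \Lambda(y_{\alpha,\eps},p^2_{\alpha,\eps},\theta_{\alpha,\eps}) - \cI(y_{\alpha,\eps},p^2_{\alpha,\eps},\theta_{\alpha,\eps})$, the Hamiltonian difference reduces to
\[
\bigl[\Lambda(x_{\alpha,\eps},p^1_{\alpha,\eps},\theta_{\alpha,\eps}) - \cI(x_{\alpha,\eps},p^1_{\alpha,\eps},\theta_{\alpha,\eps})\bigr] - \bigl[\Lambda(y_{\alpha,\eps},p^2_{\alpha,\eps},\theta_{\alpha,\eps}) - \cI(y_{\alpha,\eps},p^2_{\alpha,\eps},\theta_{\alpha,\eps})\bigr] + \delta,
\]
which is precisely the quantity controlled by the $\theta$-sectional continuity estimate \ref{item:assumption:regularity:continuity_estimate} along the compact $\theta$-family. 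Sending $\alpha \to \infty$, then $\delta \to 0$, and finally $\eps \to 0$ — the last step relying on the continuity of $\cH$ obtained from the $\Gamma$-convergence hypothesis \ref{item:assumption:Gamma-convergence} and proved in Section \ref{section:regularity-of-H} — gives $\sup_E (u - v) \leq \sup_E (h_1 - h_2)$.

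For part (b) we lift the argument by a standard time-doubling, replacing $\Phi_{\alpha,\eps}$ by
\[
(t,s,x,y) \mapsto u(t,x) - v(s,y) - \tfrac{\alpha}{2} d^2(x,y) - \tfrac{(t-s)^2}{2\gamma} - \eps\bigl[\Upsilon(x) + \Upsilon(y)\bigr] - \tfrac{\beta}{T-t}.
\]
The $(T-t)^{-1}$-term pushes the $t$-optimiser away from the terminal time, while the $t = 0$ case is handled by the functions $F_\dagger$, $F_\ddagger$ in the definition of time-dependent viscosity sub/supersolution. The spatial Hamiltonian estimate is now literally the one from paragraph two, and sending $\alpha \to \infty$, $\gamma \to 0$, $\eps \to 0$ and finally $\beta \to 0$ (in that order) produces the time-dependent comparison.

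The main obstacle — and the whole reason the assumptions are phrased on the single object $\Lambda - \cI$ rather than on $\Lambda$ and $\cI$ separately — is that once $\theta_{\alpha,\eps}$ is selected from the $x$-side of \eqref{eq:results:variational_hamiltonian}, we must still produce a useful lower bound for $\Lambda - \cI$ at the $y$-point with the \emph{same} $\theta_{\alpha,\eps}$, and do so uniformly along the compact family supplied by \ref{item:assumption:compact-sublevelsets}. In \cite{BaCD97} and \cite{KrSc21} the $p$-independence of $\cI$ decoupled these two demands; here, because $\cI$ depends on $p$ as well, the continuity estimate, the compact-sublevel-set property and the $\Gamma$-convergence must interlock, and verifying this compatibility is the main technical content of the proof.
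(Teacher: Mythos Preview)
Your overall architecture is right, and you correctly identify that the heart of the matter is passing from the Hamiltonian difference to the $(\Lambda-\cI)$-difference at a common $\theta_{\alpha,\eps}$, with compactness of the $\theta$'s coming from \ref{item:assumption:compact-sublevelsets}. But there is a genuine gap in how you handle the containment function.

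In your penalised functional $\Phi_{\alpha,\eps}$ you add $\eps\Upsilon$ additively, so the momenta fed into $\cH$ are $p^1_{\alpha,\eps}=\alpha(x_{\alpha,\eps}-y_{\alpha,\eps})+\eps\nabla\Upsilon(x_{\alpha,\eps})$ and $p^2_{\alpha,\eps}=\alpha(x_{\alpha,\eps}-y_{\alpha,\eps})-\eps\nabla\Upsilon(y_{\alpha,\eps})$. However, both Assumption~\ref{assumption:regularity:Lambda-I}\ref{item:assumption:compact-sublevelsets} (through the definition of $\Theta_{M_1,M_2,K}$) and the continuity estimate~\ref{item:assumption:regularity:continuity_estimate} (through Definition~\ref{def:results:continuity_estimate}) are stated \emph{only} for the pure distance-gradient momenta $\alpha\,\dd_x\tfrac12 d^2(\cdot,y)$ and $-\alpha\,\dd_y\tfrac12 d^2(x,\cdot)$. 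With your perturbed momenta you cannot invoke either assumption as written, and for fixed $\eps>0$ the perturbation does not vanish as $\alpha\to\infty$. The paper circumvents this by introducing auxiliary operators $H_\dagger,H_\ddagger$ whose test functions are convex combinations $(1-\eps)f+\eps\Upsilon$; convexity of $p\mapsto\cH(x,p)$ (Assumption~\ref{assumption:regularity:Lambda-I}\ref{item:assumption:convexity}) then peels off the $\Upsilon$-part as the constant $\eps C_\Upsilon$, leaving the \emph{unperturbed} momenta $\alpha\,\dd_x\tfrac12 d^2$ inside $\cH$. Comparison for $\bfH$ is first reduced to comparison for $H_\dagger,H_\ddagger$ (Lemma~\ref{lemma:viscosity_solutions_compactify2}, which absorbs your ``standard cut-off''), and the latter is proved with the doubling function $\tfrac{u}{1-\eps}-\tfrac{v}{1+\eps}-\tfrac{\alpha}{2}d^2-\tfrac{\eps}{1-\eps}\Upsilon(x)-\tfrac{\eps}{1+\eps}\Upsilon(y)$. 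Two smaller points: the subsolution inequality yields a \emph{lower} bound on $\cH(x_{\alpha,\eps},p^1_{\alpha,\eps})$ (not upper), which is what gives the first constraint in $\Theta_{M_1,M_2,K}$; the second constraint comes from the supersolution side, and both are needed for \ref{item:assumption:compact-sublevelsets}. In part~(b) the paper uses a linear time penalty $-\tfrac{\beta}{2}(t+s)+\beta T$ rather than $\beta/(T-t)$, but the spatial part is handled exactly as above and inherits the same issue in your version.
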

\begin{remark}[Domain]
	The comparison principle holds with any domain that satisfies $C_{cc}^\infty(E)\subseteq \mathcal{D}(\mathbf{H})\subseteq C^1_b(E)$. We state it with $C^\infty_{cc}(E)$ to connect it with the existence result of Theorem~\ref{theorem:existence_of_viscosity_solution}, where we need to work with test functions whose gradients have compact support.
\end{remark}
Consider the Legendre dual $\cL : E \times \bR^d \rightarrow [0,\infty]$ of the Hamiltonian,
\begin{equation*}
	\cL(x,v) := \sup_{p\in\mathbb{R}^d} \left[\ip{p}{v} - \cH(x,p)\right],
\end{equation*}
and denote the collection of absolutely continuous paths in $E$ by $\cA\cC$.
\begin{theorem}[Existence of viscosity solution] \label{theorem:existence_of_viscosity_solution}
	Consider $\cH : E \times \bR^d \rightarrow \bR$ as in \eqref{eq:results:variational_hamiltonian}. Suppose that Assumption~\ref{assumption:regularity:Lambda-I} is satisfied for $\Lambda$ and~$\mathcal{I}$, and that $\mathcal{H}$ satisfies Assumption~\ref{assumption:Hamiltonian_vector_field}. For each $\lambda > 0$, let $R(\lambda)$ be the operator
	\begin{equation}\label{resolvent}
		R(\lambda) h(x) = \sup_{\substack{\gamma \in \mathcal{A}\mathcal{C}\\ \gamma(0) = x}} \int_0^\infty \lambda^{-1} e^{-\lambda^{-1}t} \left[h(\gamma(t)) - \int_0^t \mathcal{L}(\gamma(s),\dot{\gamma}(s))\right] \, \dd t.
	\end{equation}
	Then $R(\lambda)h$ is the unique viscosity solution to $f - \lambda \bfH f = h$.
\end{theorem}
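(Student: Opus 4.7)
Uniqueness of a viscosity solution follows immediately from Theorem~\ref{theorem:comparison_principle_variational}(\ref{item:theorem-comparison}): any two solutions $u_1,u_2$ are simultaneously subsolutions and supersolutions with common right-hand side $h$, so $u_1\le u_2$ and $u_2\le u_1$. The content of the proof is therefore to verify that the control-theoretic value function $R(\lambda)h$ is a viscosity solution of $f - \lambda \bfH f = h$. My approach is the classical Bellman-style verification: prove a Dynamic Programming Principle and read off the sub- and supersolution inequalities from it via test functions, using the Hamiltonian vector field structure to keep trajectories inside $E$.

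\textbf{Preliminaries and DPP.} I would first establish that $R(\lambda)h$ is bounded: the constant path $\gamma\equiv x$ gives a lower bound, while $\cL \ge 0$ together with $h\in C_b(E)$ gives an upper bound. Next I would prove the Dynamic Programming Principle
\[
R(\lambda)h(x) = \sup_{\gamma(0)=x}\left\{\int_0^T \lambda^{-1} e^{-\lambda^{-1}t}\left[h(\gamma(t)) - \int_0^t \cL(\gamma(s),\dot\gamma(s))\,ds\right] dt + e^{-\lambda^{-1}T} R(\lambda)h(\gamma(T))\right\}
\]
for every $T>0$ by the standard concatenation-of-paths argument. Upper and lower semicontinuity of $R(\lambda)h$ follow from the corresponding semicontinuity of $\cL$, which is obtained via Legendre duality from the continuity of $\cH$ proved in Section~\ref{section:regularity-of-H} under Assumption~\ref{assumption:regularity:Lambda-I}\ref{item:assumption:Gamma-convergence}.

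\textbf{Sub- and supersolution properties.} For the subsolution property, given $f \in C_{cc}^\infty(E)$ and a maximiser $x_0$ of $R(\lambda)h - f$ (existing by Remark~\ref{remark:existence of optimizers} since $f$ has compact sublevel sets), the DPP combined with a first-order Taylor expansion of $f$ along an arbitrary path reduces the required inequality to producing \emph{one} admissible short trajectory $\gamma$ starting at $x_0$ along which $\langle \nabla f(\gamma(s)), \dot\gamma(s)\rangle - \cL(\gamma(s),\dot\gamma(s))$ is uniformly close to $\cH(x_0,\nabla f(x_0))$. This is precisely a measurable selection from the differential inclusion
\[
\dot\gamma(t) \in \partial_p \cH(\gamma(t),\nabla f(\gamma(t))),
\]
which is handled through Assumption~\ref{assumption:Hamiltonian_vector_field}. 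The supersolution property at a minimiser $x_0$ of $R(\lambda)h-f$ is more direct: choose an $\eta$-optimal path $\gamma^\eta$ in the supremum defining $R(\lambda)h(x_0)$, expand $f$ along it, and apply the pointwise Legendre inequality $\langle\nabla f(\gamma^\eta),\dot\gamma^\eta\rangle - \cL(\gamma^\eta,\dot\gamma^\eta) \le \cH(\gamma^\eta,\nabla f(\gamma^\eta))$ before sending $\eta\downarrow 0$ using continuity of $\cH$.

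\textbf{Main obstacle.} The hardest step is the subsolution verification. Since the cost $\cI$ may be unbounded, discontinuous, and $p$-dependent, the Legendre dual $\cL$ can be highly singular and the set $\partial_p \cH(x_0,\nabla f(x_0))$ may be empty or point out of $E$ at boundary points $x_0 \in \partial E$. Guaranteeing the existence of an $E$-valued solution to the differential inclusion is exactly the content of Assumption~\ref{assumption:Hamiltonian_vector_field}, and bridging that assumption to a usable Filippov-type selection—while simultaneously ensuring that the selected trajectory produces the correct short-time average of $\langle\nabla f,\dot\gamma\rangle - \cL$—is the most delicate part of the argument.
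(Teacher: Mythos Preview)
Your proposal is a correct and self-contained sketch of the classical Bellman verification, but it takes a genuinely different route from the paper. The paper's proof is a single line: it invokes Theorem~2.8 of \cite{KrSchl20}, which packages exactly the machinery you outline (DPP, regularity of the value function, Filippov-type selection for the differential inclusion) under the hypothesis $\partial_p\cH(x,p)\cap T_E(x)\neq\emptyset$. In other words, the paper treats existence as a black box already available in the literature, whereas you reopen the box and rebuild its contents.

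What each approach buys: the paper's route is short and avoids repeating standard control-theoretic arguments, at the cost of hiding where Assumption~\ref{assumption:Hamiltonian_vector_field} actually enters. Your route is transparent about the mechanism---you correctly isolate the subsolution step and the $E$-valued differential inclusion as the crux---and would make the paper self-contained. The price is that several of your ``standard'' steps need real work in this setting: upper semicontinuity of the value function (a supremum) does not follow from lower semicontinuity of $\cL$ alone and typically requires compactness of sublevel sets of $\cL$ or an equicoercivity argument tied to the containment function $\Upsilon$; and the lower bound via the constant path needs $\cL(x,0)$ uniformly bounded, which is not immediate from the stated assumptions. These are exactly the technicalities that \cite{KrSchl20} absorbs, so your identification of the ``main obstacle'' is accurate but slightly understated---the regularity of $R(\lambda)h$ is a second nontrivial ingredient.
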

\begin{remark}
	The form of the solution is typical, see for example Section III.2 in \cite{BaCD97}. It is the value function obtained by an optimization problem with exponentially discounted cost. The difficulty of the proof of Theorem~\ref{theorem:existence_of_viscosity_solution} lies in treating the irregular form of $\cH$. 
\end{remark}
\begin{remark}
    We mention that in Theorem \ref{theorem:existence_of_viscosity_solution} we only state the existence of viscosity solution for the stationary equation $f - \lambda \bfH f = h$. In our work \cite{DeCoKr23}, we will prove the existence also for the evolutionary equation.
\end{remark}
\subsection{Assumptions} \label{section:assumptions}
In this section, we formulate and comment on the assumptions imposed on the Hamiltonian defined in the previous section. 
\smallskip

We start with the \emph{continuity estimate}.
We will apply the definition below for $\cG = \Lambda-\cI$.

\begin{definition}[Continuity estimate] \label{def:results:continuity_estimate}
	Let  $\cG: E \times \mathbb{R}^d\times\Theta \rightarrow \bR$, $(x,p,\theta)\mapsto \cG(x,p,\theta)$ be a function. Suppose that for each $\varepsilon > 0$, 
	there is a sequence of positive real numbers $\alpha \rightarrow \infty$.
	
	Suppose that for each $\varepsilon$ and $\alpha$ we have variables $(x_{\alpha,\eps},y_{\alpha,\eps})$ in $E^2$ and variables $\theta_{\alpha,\eps}$ in $\Theta$. We say that this collection is \textit{fundamental} for $\cG$ with if:
	\begin{enumerate}[label = (C\arabic*)]
		\item \label{item:def:continuity_estimate:1} For each $\varepsilon$, there are compact sets $K_\varepsilon \subseteq E$ and $\widehat{K}_\varepsilon\subseteq\Theta$ such that for all $\alpha$ we have $x_{\alpha,\eps},y_{\alpha,\eps} \in K_\varepsilon$ and $\theta_{\alpha,\eps}\in\widehat{K}_\varepsilon$.
		\item \label{item:def:continuity_estimate:2} 
		For each $\varepsilon > 0$, we have $\lim_{\alpha \rightarrow \infty} \frac{\alpha}{2} d^2(x_{\alpha,\eps},y_{\alpha,\eps}) = 0$. For any limit point $(x_\varepsilon,y_\varepsilon)$ of $(x_{\alpha,\eps},y_{\alpha,\eps})$, we have $\dd(x_{\varepsilon},y_{\varepsilon}) = 0$.
		\item \label{item:def:continuity_estimate:3} We have 
		for all $\varepsilon > 0$
		\begin{align} 
			& \sup_{\alpha} \cG\left(y_{\alpha,\eps}, - \frac{\alpha}{2}\dd_y d^2 (x_{\alpha,\eps},\cdot)(y_{\alpha,\eps}),\theta_{\alpha,\eps}\right) < \infty, \label{eqn:control_on_Gbasic_sup} \\
			& \inf_\alpha \cG\left(x_{\alpha,\eps}, \frac{\alpha}{2} \dd_x d^2(\cdot,y_{\alpha,\eps})(x_{\alpha,\eps}),\theta_{\alpha,\eps}\right) > - \infty. \label{eqn:control_on_Gbasic_inf} 	
		\end{align} \label{itemize:funamental_inequality_control_upper_bound}
	\end{enumerate}
	We say that $\cG$ satisfies the \textit{continuity estimate} if for every fundamental collection of variables we have for each $\varepsilon > 0$ that
	\begin{equation}\label{equation:Xi_negative_liminf}
		\liminf_{\alpha \rightarrow \infty} \cG\left(x_{\alpha,\eps}, \alpha  \dd_x\frac{1}{2}d^2(\cdot,y_{\alpha,\eps})(x_{\alpha,\eps}),\theta_{\alpha,\eps}\right)
		- \cG\left(y_{\alpha,\eps}, - \alpha \dd_y\frac{1}{2}d^2(x_{\alpha,\eps},\cdot)(y_{\alpha,\eps}),\theta_{\alpha,\eps}\right) \leq 0.
	\end{equation}
\end{definition}
The continuity estimate is indeed exactly the estimate that one would perform when proving the comparison principle for the Hamilton-Jacobi equation in terms of the Hamiltonian \eqref{eq:results:variational_hamiltonian} 
 (disregarding the supremum over $\theta$). Indeed, in standard proofs of comparison principle one usually wants to control the difference of Hamiltonians calculated in particular collections of points.
Typically, the control on $(x_{\alpha,\eps},y_{\alpha,\eps})$ that is assumed in \ref{item:def:continuity_estimate:1} and \ref{item:def:continuity_estimate:2} is obtained from choosing $(x_{\alpha,\eps},y_{\alpha,\eps})$ as optimizers in the doubling of variables procedure (see Lemma~\ref{lemma:doubling_lemma}), and the control that is assumed in~\ref{item:def:continuity_estimate:3} is obtained by using the viscosity sub- and supersolution properties in the proof of the comparison principle. The required restriction to compact sets in Lemma~\ref{lemma:doubling_lemma} is obtained by including in the test functions a \emph{containment function}.
\begin{definition}[Containment function]\label{def:containment-new}
    We say that a function $\Upsilon: E\to [0,\infty]$ is a \textit{containment function} for $\cH$ if $\Upsilon\in C^1(E)$ and there exists a constant $c_\Upsilon$ such that
    \begin{itemize}
        \item For every $c\geq 0$, the set $\{x\, | \, \Upsilon(x)\leq c\}$ is compact;
        \item $\sup_\theta \sup_x \left(\Lambda(x,\nabla\Upsilon(x), \theta)- \cI (x, \nabla \Upsilon(x), \theta)\right)\leq c_\Upsilon. $
    \end{itemize}
\end{definition}
For two constants $M_1, M_2$ and a compact set $K\subseteq E$ we write
\begin{align}\label{eq:def_levelsets}
    \Theta_{M_1,M_2,K} := \bigcup_{x, y\in K} \bigcup_{\alpha > 1} \biggl\{\theta\in \Theta \, |  \, \cI\left(x,\partial_x \frac{\alpha}{2} d^2 (x,y),\theta\right) &-\Lambda\left(x,\partial_x \frac{\alpha}{2} d^2 (x,y),\theta\right)\leq M_1 , \\  
    \Lambda\left(y, \partial_y \frac{- \alpha}{2} d^2 (x,y), \theta\right) &- \cI\left(y,\partial_y \frac{-\alpha}{2} d^2 (x,y),\theta\right)\leq M_2\biggr\}.
\end{align}
To prove the main results we will also make use of the continuity of $\cH$. The continuity of $\cH$ is proved in Proposition \ref{prop:reg-of-H-and-L:continuity} by making use of the following notion of convergence for the function $\cI-\Lambda$.
\begin{definition}[$\Gamma$--convergence]\label{def:Gamma-convergence}
Let $J:E\times\R^d\times \Theta\to \R\cup\{\infty\}$. We say that $J$ is $\Gamma$--convergent in terms of $(x,p)$, if 
\begin{enumerate}
    \item If $x_n\to x$ in $E$, $p_n\to p $ in $\R^d$ and $\theta_n\to \theta$ then $\liminf_n J(x_n,p_n,\theta_n)\geq J(x,p,\theta)$,
    \item For $x_n\to x$ and $p_n\to p$ and for all $\theta\in\Theta$ there are $\theta_n$ such that $\theta_n\to \theta$ and $\limsup_n J(x_n,p_n,\theta_n)\leq J(x,p,\theta)$.
\end{enumerate}
    
\end{definition}
We will consider the following assumption.
\begin{assumption}\label{assumption:regularity:Lambda-I}
The functions $\cH$ and $\Lambda-\cI$ verify the following properties.
    \begin{enumerate}[(I)]
        \item \label{item:assumption:convexity}The map $p\mapsto\cH(x,p)$ is convex and $\cH(x,0)=0$ for every $x\in E$.
        \item \label{item:assumption:regularity:boundness}The function $\theta\mapsto\Lambda(x,p,\theta)-\cI(x,p,\theta)$ is bounded from above for every $x,p$.
        \item \label{item:assumption:compact_containment}There exists a containment function $\Upsilon:E\to[0,\infty]$ in the sense of Definition \ref{def:containment-new}.
        \item \label{item:assumption:Gamma-convergence} The function $\cI-\Lambda$ is $\Gamma$--convergent in terms of $(x,p)$.
        \item \label{item:assumption:compact-sublevelsets} $\forall M_1, M_2 \in \R$ and $ \forall K$ compact, the set $\Theta_{M_1,M_2,K}$ is relatively compact.
        \item \label{item:assumption:statinary_meas}
        $\forall x \in E$ , $\forall p \in \R^d$ and for all small neighborhood of $x$ and $p$, $U_x$ $V_p$, there exists a continuous function $g:U_x\times V_p \to \R$ such that the set $\phi_g(y,q)=\{\theta \in \Theta \, | \, \cI(y,q,\theta) - \Lambda (y,q,\theta) \leq g(y,q)\}$ is non-empty and compact $\forall \, y\in U_x, q \in V_p$.
        \item \label{item:assumption:regularity:continuity_estimate}The function $\Lambda - \cI$ verifies the continuity estimate in the sense of Definition \ref{def:results:continuity_estimate}.
     \end{enumerate}
\end{assumption}
To establish the existence of viscosity solutions, we will impose one additional assumption. For a general convex functional $p \mapsto \Phi(p)$ we denote
\begin{equation} \label{eqn:subdifferential}
	\partial_p \Phi(p_0)
	:= \left\{
	\xi \in \mathbb{R}^d \,:\, \Phi(p) \geq \Phi(p_0) + \xi \cdot (p-p_0) \quad (\forall p \in \mathbb{R}^d)
	\right\}.
\end{equation}

\begin{definition} \label{definition:tangent_cone}
	The tangent cone (sometimes also called \textit{Bouligand cotangent cone}) to $E$ in $\bR^d$ at $x$ is
	\begin{equation*}
		T_E(x) := \left\{z \in \bR^d \, \middle| \, \liminf_{\lambda \downarrow 0} \frac{d(x + \lambda z, E)}{\lambda} = 0\right\}.
	\end{equation*}
\end{definition}

\begin{assumption} \label{assumption:Hamiltonian_vector_field} 
One of the following conditions hold:
 \begin{enumerate}[(a)]
     \item the set $E$ is open;
     \item the set $E$ is closed and convex and the map $p\mapsto \cH(x,p)$ is such that $\partial_p \cH(x,p) \cap T_E(x) \neq \emptyset$ for all $x \in E$, $p \in \bR^d$.
 \end{enumerate}
 
\end{assumption}
The above assumption implies that the solutions of the differential inclusion in terms of $\partial_p \cH(x,p)$ remain inside $E$.

\section{Basic examples to clarify the assumptions and comparison with previous works}\label{section:clarify-assumptions}
Although in Section \ref{section:verification-for-examples-of-Hamiltonians} we present a fully new example beyond present results, in this section, we give four examples. The first two are those treated in the previous works \cite{BaCD97}  \cite{KrSc21}, showing that our results includes these examples. To emphasize the fact that we can address more examples then the previous works, we illustrate a third example that does not fall into the list of examples considered in \cite{KrSc21} but that can be studied with our results. The final example clarifies  Assumption \ref{assumption:regularity:Lambda-I} \ref{item:assumption:compact-sublevelsets}.
\subsection{The classical control Hamiltonian}
The following first example is a classical example that was firstly given in \cite{BaCD97}. 
\begin{example}
Consider the Hamiltonian 
\begin{equation}
    \cH(x,p) = \sup_{a\in A} \left\{ - f(x,a) \cdot p - l(x,a)\right\},
\end{equation}
with $A$ a compact set, $f$ a Lipschitz continuous function and $l$ a non-negative continuous function such that 
\begin{equation}
    \sup_{a\in A}\vert l(x,a) - l(y,a)\vert \leq w ( \vert x-y \vert),
\end{equation}
with $w$ a modulus of continuity. Then, $\cH$ verifies Assumption \ref{assumption:regularity:Lambda-I}.
\end{example}
\begin{proof}[Proof sketch.]
We show in the following that the example verifies our assumptions.
\begin{enumerate}[(I)]
    \item $\cH(x,p)$ is convex in $p$ and $\cH(x,0)=\sup_{a\in A} \left\{ - l(x,a)\right\}= 0$.
    \item $a\mapsto \Lambda(x,p,a) - \cI(x,a)$ is bounded since $A$ is compact and $\Lambda - \cI$ is continuous.
    \item $\Upsilon(x) = \frac{1}{2} \log (1 + \vert x \vert ^2)$ is a containment function, in the sense of Definition \ref{def:containment-new}, since $f$ is Lipschitz.
    \item $\cI - \Lambda$ is continuous and hence $\Gamma$- convergent in $(x,p)$.
    \item For every $M_1,M_2 \in \R$ and $K$ compact, the closure of $\Theta_{M_1,M_2,K}$ is a closed subset of $A$ and hence it is compact. 
    \item Since $\cI(x,p,\cdot) - \Lambda(x,p,\cdot)$ is a continuous function on $A$ compact, there exists $M\geq 0$ such that $\cI(x,p,a) - \Lambda(x,p,a)\leq M$, for every $a\in A$.
    \item The function $\Lambda - \cI$ verifies trivially the continuity estimate due to the Lipschitz property of $f$ and the modulus continuity of $l$. 
\end{enumerate}
\end{proof}

We can then conclude that our result extends \cite[Theorem 3.1]{BaCD97}. In the same way it is possible to show that our result extends also the time--dependent case \cite[Theorem 3.7]{BaCD97}.

\smallskip
\subsection{Hamiltonians with discontinuous and x dependent cost function}
As explained in the introduction, cases in which $\cI$ is not bounded or not continuous and $\Lambda$ is not Lipschitz or not coercive are not covered by \cite{BaCD97}. In \cite{KrSc21}, the authors instead treat these cases, but keeping the cost function $\cI$ independent of $p$. 
We would like to emphasize that while many of our assumptions are implied by the assumptions in \cite{KrSc21}, it is not straightforward and clear whether our fifth assumption can be derived from the assumptions in \cite{KrSc21} (in particular by their assumption ($\cI 3$)). The inclusion of momentum in the cost function adds an element of complexity, making it challenging to generalize the previous assumption that does not account for momentum. Nevertheless, our approach includes all the examples addressed in \cite{KrSc21} and more. 

In particular, we show in the following two examples of Hamiltonians with a discontinuous and $x$ dependent cost function $\cI$. The first one was treated in Proposition 5.9 in \cite{KrSc21} and in Example \ref{example:x_dep_cost-function_workingforkraaijsch} we show that it also matches our assumptions. The second example is a case of Hamiltonian that can not be addressed by \cite{KrSc21}. But in Example \ref{example:x_dep_cost-function_workingforus} we show that it does fit our assumptions and hence that we can cover more cases then previous works.

\begin{example}\label{example:x_dep_cost-function_workingforkraaijsch}
We consider $E=\R^d$, $F= \{1, \dots, J\}$ and $\Theta=\mathcal{P}(F)$ and
\begin{align}
    &\Lambda(x,p,\theta) = \sum_{i\in F} \left[\langle a(x,i) p , p \rangle + \langle b(x,i), p \rangle \right]\theta_i \\ 
    &\cI(x,\theta) = \sup_{w\in \R^J} \sum_{ij} r(i,j,x) \theta_i \left [ 1 - e^{w_j - w_i}\right],
\end{align}
where $a: E \times F \to \R^{d\times d}$, $b: E\times F \to \R^d $, $ r: F^2\times E \to [0,\infty)$ and $\theta_i = \theta(\{i \})$. Then, Assumption \ref{assumption:regularity:Lambda-I} holds. 
\end{example}
\begin{proof}[Proof Sketch.] 
\begin{enumerate}[(I)]
    \item Convexity of $\cH$ in $p$ and the fact that $\cH(x,0)=0$ is shown in \cite{KrSc21} Proposition 5.11. 
    \item $\theta \mapsto \Lambda(x,p,\theta) - \cI(x,p,\theta)$ is bounded since $\Lambda$ is bounded and continuous and $\cI$ has compact sublevel sets in $\Theta$ as shown in Proposition 5.9 of \cite{KrSc21}.
    \item The existence of a containment function $\Upsilon$ is shown in Proposition 5.11 of \cite{KrSc21}. Indeed, in \cite{KrSc21} the authors show that there exists a function $\Upsilon$ that has compact sublevel sets and such that $ \Lambda(x,\nabla \Upsilon(x),\theta)$ is bounded from above. By observing that $\Lambda(x,\nabla \Upsilon(x), \theta) - \cI(x,\theta)$ is bounded above by $\Lambda(x,\nabla \Upsilon(x),\theta)$, it becomes evident that $\Upsilon$ is also a containment function in the sense of Definition \ref{def:containment-new}.
    \item $\cI - \Lambda$ is $\Gamma$-convergent as $\cI$ is $\Gamma$-convergent (see Proposition 5.9 in \cite{KrSc21}) and $\Lambda$ is continuous.
    \item For every $M_1,M_2\in \R$ and every $K$ compact, the closure of $\Theta_{M_1,M_2,K}$ is a closed subset of $\Theta$ and, hence, compact. 
    \item In Proposition 5.9 of \cite{KrSc21} it is shown that there exists a $\theta^0(x,p)$ such that $\cI(x,p,\theta^0(x,p))=0$. Hence, taking $g(x,p)=-\Lambda(x,p,\theta^0(x,p))$, it follows that the set $\phi_{g}(x,p)$ is not empty.
    \item $\Lambda - \cI$ verifies the continuity estimate since $\Lambda$ verifies the continuity estimate as shown in Proposition 5.11 in \cite{KrSc21} and $\cI$ is equicontinuous by Proposition 5.9 in \cite{KrSc21}.
    \end{enumerate}
\end{proof}
   The following example is a case of Hamiltonian with an unbounded internal Hamiltonian $\Lambda$ in terms of $\theta$. This leads to a situation where Assumption $(\Lambda 4)$ in \cite{KrSc21} fails. In the following we prove the validity of our Assumption \ref{assumption:regularity:Lambda-I}. 

The key distinction in our approach, enabling the success of this example while it remained unaddressed in \cite{KrSc21}, lies in our treatment of $\Lambda$ and $\cI$ in an integrated whole $\Lambda - \cI$ rather than separately and with two different types of assumptions. In this case,  while $\Lambda$ is not bounded in terms of $\theta$, the composite $\Lambda - \cI$ is bounded. 
Consequently, in contrast to Assumption $(\Lambda 4)$ in \cite{KrSc21}, which solely considers the boundedness of $\Lambda$ and subsequently fails in this scenario, our Assumption \ref{assumption:regularity:Lambda-I}  \ref{item:assumption:regularity:boundness} instead requires the boundedness of $\Lambda - \cI$ and consequently holds.
\begin{example}\label{example:x_dep_cost-function_workingforus}
        Consider $E=\R^d$ and $\Theta = \mathcal{P}(\R)$ and 
        \begin{align}
            &\Lambda(x,p,\theta) = \int_\R - x^3 p + \frac{1}{2}(1+|z|) p^2 \, \theta(dz) \\
            &\cI(x,\theta) = - \inf_{\phi \in C^2(\R)} \int_\R \frac{L_x \phi(z)}{\phi(z)} \, \theta(dz),
        \end{align}
        where
        \begin{equation}
            L_x \phi(z) := - (z - x) \phi ' (z) + \frac{1}{2}\Delta \phi(z) .
        \end{equation}
Then, Assumption \ref{assumption:regularity:Lambda-I} holds.
\end{example}
\begin{proof}[Proof sketch.]

In the following we prove Assumptions \ref{item:assumption:compact_containment} and \ref{item:assumption:compact-sublevelsets}. The proof of the other assumptions are or trivial or similar to the proof for Example \ref{example:x_dep_cost-function_workingforkraaijsch}.

We firstly prove that 
\begin{equation}\label{eq:costfunction_bound}
    \cI(x,\theta) \geq -\frac{1}{2} (1+ x^2) + \frac{1}{2}\int_\R (x-z)^2 \theta(dz),
\end{equation}
then, we will proceed with the proof of Assumptions \ref{item:assumption:compact_containment} and \ref{item:assumption:compact-sublevelsets}.

\textit{Proof of \eqref{eq:costfunction_bound}.}
Let $\tilde{\phi}(z) = e^{1+\frac{1}{2} z^2}$ and $\xi_n(r) = \frac{n r}{n+r}$ and let $\phi_n = \xi_n \circ \tilde{\phi}$ and call $J(x,\theta)$ the function at the right-hand side of \eqref{eq:costfunction_bound}. Note that 
\begin{equation}
    J(x,\theta) = - \int_\R \frac{L_x \tilde{\phi}}{\tilde{\phi}}.
\end{equation}
Moreover, 
\begin{equation}
    \inf_g \int_\R \frac{L_x g}{g} \leq \liminf_n \int_\R \frac{L_x \phi_n}{\phi_n}.
\end{equation}
Consequently, we only need to prove that 
\begin{equation}
    \liminf_n \int_\R \frac{L_x \phi_n}{\phi_n} \leq \int_\R \frac{L_x \tilde{\phi}}{\tilde{\phi}}.
\end{equation}
By definition of $L_x$, we have
\begin{align}\label{eq:generator_testfun} 
L_x \phi_n(z) = & - (z-x) \left(\frac{n^2}{(n+e^{1+1/2 z^2})^2} \right) \tilde{\phi}'(z) + \frac{1}{2} \left(\frac{n^2}{(n+e^{1+1/2 z^2})^2} \right) \tilde{\phi}''(z) \\
    & + \frac{1}{2}\left[\frac{-2n^2}{(n+ e^{1+1/2 z^2})^3} z^2 e^{2(1+1/2 z^2)} \right].
\end{align}
Dividing \eqref{eq:generator_testfun} by $\phi_n$, 
\begin{equation}
    \frac{L_x\phi_n}{\phi_n} = \left(\frac{n^2}{(n+e^{1+1/2 z^2})^2} \right) \frac{L_x \tilde{\phi}}{\phi_n} - \left(\frac{n^2}{(n+e^{1+1/2 z^2})^3} \right) z^2 e^{2(1+1/2 z^2)}.
\end{equation}
Moreover, 
\begin{align}
    \liminf_n \int_\R \frac{L_x \phi_n}{\phi_n}
    &= \liminf_n \int_\R \underbrace{\left(\frac{n^2}{(n+e^{1+1/2 z^2})^2} \right) \frac{L_x \tilde{\phi}}{\phi_n}}_{\downarrow \frac{L_x \tilde{\phi}}{\tilde{\phi}}}
    + \liminf_n \int_\R \underbrace{-\left(\frac{n^2}{(n+e^{1+1/2 z^2})^3} \right) z^2 e^{2(1+1/2 z^2)}}_{\leq 0} \\
    &\leq \int_\R \frac{L_x \tilde{\phi}}{\tilde{\phi}},
\end{align}
where in the last inequality we used that $\xi_n (r)$ converges to $r$ for $n \to \infty$. We can conclude that
\begin{align}
    \cI(x,\theta) \geq \int_\R -(z-x) z + \frac{1}{2} (1+z^2) = \int_\R - \frac{1}{2}(1+x^2) + \frac{1}{2}(x - z)^2 \, \theta(dz),
\end{align}
establishing \eqref{eq:costfunction_bound}.

\textit{Proof of Assumption \ref{item:assumption:compact-sublevelsets}}.
We want to prove that $\Theta_{M_1,M_2,K}$ defined in \eqref{eq:def_levelsets} is relatively compact for all $M_1,M_2$ constants and $K$ compact. 
To this aim, note that, by definition of $\Theta_{M_1,M_2,K}$ ( \eqref{eq:def_levelsets} at page \pageref{eq:def_levelsets}) and by \eqref{eq:costfunction_bound}, we have
\begin{align}\label{eq:subseteq_levelset}
    \Theta_{M_1,M_2,K} &\subseteq \bigcup_{x,y\in K} \bigcup_{\alpha>1} \left\{ \theta \, | \, \cI(x,\theta) - \Lambda(x,\partial_x\frac{\alpha}{2} d^2(x,y),\theta) < M_1\right\} \\ &\subseteq \bigcup_{x,y\in K} \bigcup_{\alpha>1} \left\{ \theta \, | \, J(x,\theta) - \Lambda(x,\partial_x\frac{\alpha}{2} d^2(x,y),\theta) < M_1\right\} := \Theta^*_{M_1,K}.
\end{align} 

Moreover, since $J(x,\theta)$ has compact sublevel sets in $\theta$ for $x$ in a compact $K$, and considering the quadratic nature of $J$  together with the linearity of $\Lambda$ in $z$, it follows that the sublevel sets of $J - \Lambda$ are also compact in $\theta$.
Consequently, this implies that the set $\Theta^*_{M_1,K}$ is compact. Then, by \eqref{eq:subseteq_levelset}, we can conclude that $\Theta_{M_1,M_2,K}$ is relatively compact.

\textit{Proof of Assumption \ref{item:assumption:compact_containment}.} We prove that $\Upsilon(x) = \frac{1}{2} x^2$ is a containment function.
To do so, note that by \eqref{eq:costfunction_bound} we have
\begin{align}
    \Lambda(x,\nabla \Upsilon(x),\theta) - \cI(x,\theta) &\leq - x^4 + \frac{1}{2}\int_\R (1+|z|) x^2 \, \theta(dz) + \frac{1}{2} (1+ x^2) - \frac{1}{2}\int_\R (x-z)^2 \, \theta(dz) \\
    &= x^2 - x^4 + \frac{1}{2} + \frac{1}{2} \int_\R |z| x^2 -\frac{1}{2} (x-z)^2 \theta(dz)\\
    &= x^2 - x^4 + \frac{1}{2} +  \int_\R \frac{1}{2}|z| x^2 -\frac{1}{2} x^2 - \frac{1}{2} z^2 +xz \, \theta(dz)\\
    &= \frac{1}{2}x^2 - x^4 + \frac{1}{2} +  \int_\R \frac{1}{2}|z| x^2  - \frac{1}{2} z^2 +\left(x\sqrt{2}\right)\left(\frac{z}{\sqrt{2}}\right) \, \theta(dz) \\
    & \leq \frac{1}{2} x^2 - x^4 + \frac{1}{2} + \int_\R \frac{1}{4} z^2 + \frac{1}{4} x^4 - \frac{1}{2} z^2 + x^2 + \frac{1}{4}z^4 \, \theta(dz)\\
    &= - \frac{3}{4} x^4 + \frac{3}{2}x^2 + \frac{1}{2},
\end{align}
where, in the last inequality we used the rule $ab \leq \frac{1}{2}a^2 +\frac{1}{2}b^2$ for $\frac{1}{2}|z|x^2$ and $xz$. Note that the last line is bounded from above for all $x$ and $\theta$. This concludes the proof.
\end{proof}
\subsection{Hamiltonian with x and p dependent cost function}
    Although \cite{KrSc21} effectively covers a greater number of cases than \cite{BaCD97}, as explained in the previous subsection, it is unable to address scenarios where the cost function depends on momenta $p$. For instance, in \cite{DecoKr} and \cite{Po18}, it is proved comparison principle with two different "ad hoc" proofs involving coercivity or Lipschitz estimates and optimization problems, for an Hamilton--Jacobi--Bellman equation with an Hamiltonian of the type as in \eqref{def:hamiltonian}. 
These examples does not fall into the cases of \cite{KrSc21} due to the presence of $p$ in $\cI$. Indeed, assumptions as $(\cI 5) $ and $(\cI 4)$ in \cite{KrSc21} are not satisfied and quite challenging to modify to incorporate the momenta $p$. Our assumptions are instead satisfied.

  We also want to mention that in \cite{FK06} it is possible to find some examples of Hamiltonians with cost function depending on $p$. We confidently assert that our results can cover all these examples.
  
We can conclude, then, that our work can be used for a large class of examples including examples that fall into previously treated theories, as the two examples above, as well as those that have been addressed by means of "ad hoc" proofs, as the examples in \cite{DecoKr} and \cite{Po18}, and cases left unexplored as the one that we study in Section \ref{section:verification-for-examples-of-Hamiltonians}.

\smallskip
\subsection{Examples to clarify Assumption \ref{item:assumption:compact-sublevelsets}}
In this subsection, we want to clarify our Assumption \ref{item:assumption:compact-sublevelsets} using two straightforward examples. 

In the introduction we explained that the first three assumptions are standard in the proof of comparison principle and that the $\Gamma$ - convergence and assumption \ref{item:assumption:statinary_meas} is needed to prove continuity of the Hamiltonian. We show now that Assumption \ref{item:assumption:compact-sublevelsets} is a key assumption. Indeed, the continuity estimate of $\Lambda - \cI $ is morally equivalent to the comparison principle for $\Lambda - \cI$ for well chosen $\theta$. Thus, one is enable to control the difference of Hamiltonians, and hence to prove comparison principle for $\cH$ by controlling the difference of $\Lambda- \cI$, if it is also possible to control the variable $\theta$, i.e., if assumption \ref{assumption:regularity:Lambda-I} \ref{item:assumption:compact-sublevelsets} holds. In the first example we will show how assumption 
\ref{assumption:regularity:Lambda-I} \ref{item:assumption:compact-sublevelsets} may not hold and subsequently lead to the failure of the comparison principle. In the second example, we show how to get Assumption \ref{assumption:regularity:Lambda-I} \ref{item:assumption:compact-sublevelsets} and, hence, comparison.
\begin{example}\label{example:assumptionV_notworking}
Consider $E= \R^2$, $\Theta = \R$ and for $x=(x_1,x_2)\in E$ and $p=(p_1,p_2) \in \R^2$ consider
\begin{equation}
    \Lambda(x,p,\theta)= \frac{1}{2} a(x) p_1 ^2 + b(x,\theta) p_1,
\end{equation}
and 
\begin{equation}
    \cI(p,\theta) = |p_2 - \theta|.
\end{equation}
Then, Assumption \ref{item:assumption:compact-sublevelsets} fails.
\end{example}
\begin{proof}[Proof sketch.]
In a standard comparison principle proof, one usually finds two sequences of points $x_{\alpha,\eps}$ and $y_{\alpha,\eps}$, optimizers in a doubling variable procedure, and the corresponding momenta $p_{\alpha,\eps}$. Assuming that, for some $\theta_{\alpha,\eps}$ and all fixed small $\eps >0$, 
\begin{equation}
   \liminf_{\alpha\to\infty}( \Lambda - \cI )( x_{\alpha,\eps}, p_{\alpha,\eps}, \theta_{\alpha,\eps}) - ( \Lambda - \cI )( y_{\alpha,\eps}, p_{\alpha,\eps}, \theta_{\alpha,\eps}) \leq 0,
\end{equation}
i.e., that morally comparison holds for $\Lambda - \cI$ along $\theta_{\alpha,\eps}$, one wants then use this bound to prove comparison for $\cH$. However, for $p_1\neq p_2$, Assumption \ref{assumption:regularity:Lambda-I} \ref{item:assumption:compact-sublevelsets} fails since it is not possible to control the $\theta_{\alpha,\eps}$. 
Indeed, using the subsolution and supersolution properties, one is able to show that 
\begin{align}\label{example:subsolution}
   \sup_\alpha (\cI - \Lambda)(x_{\alpha,\eps}, p_{\alpha,\eps}, \theta_{\alpha,\eps}) < \infty,
\end{align}
\begin{equation} \label{example:supersolution}
    \sup_\alpha (\Lambda - \cI) (y_{\alpha,\eps}, p_{\alpha,\eps}, \theta_{\alpha,\eps}) < \infty.
\end{equation}

However, with the above bounds, in this case, we can not conclude any control on the the sequence $\theta_{\alpha,\eps}$. Indeed, if for example $p_{2,\eps,\alpha}$ blows up, $\theta_{\alpha,\eps}$ consequentially does the same in order to have \eqref{example:subsolution}. Using this observation, we can conclude that comparison principle in this case fails.
\end{proof}
In the following example we show how to make Assumption \ref{item:assumption:compact-sublevelsets} work.
\begin{example}\label{example:assumptionV_working}
 We consider $E=\R$, $\Theta = \R$ and for $x\in E$ and $p\in \R$ consider
 \begin{equation}
     \Lambda(x,p,\theta)=\frac{1}{2} a(x) p^2 + b(x,\theta)p,
 \end{equation}
 and
 \begin{equation}
     \cI(p,\theta) = |p-\theta|.
 \end{equation}
 Assumption \ref{item:assumption:compact-sublevelsets} holds.
 \end{example}
 \begin{proof}[Proof sketch.]
The Hamiltonian now is coercive as $\Lambda$ controls the order of $p$. This means that 
$$\lim_{|p| \to \infty} \Lambda(x,p,\theta) - \cI (x,p,\theta) = \infty. $$
In this case, then, by coercivity and \eqref{example:supersolution}, $p_{\alpha,\eps}$ can not blow up. Using this fact and the bound \eqref{example:subsolution}, we can conclude that $\theta_{\alpha,\eps}$ can not go to infinity as well.
 In this case assumption \ref{assumption:regularity:Lambda-I} \ref{item:assumption:compact-sublevelsets} holds and comparison as well.  
\end{proof}

\begin{remark}
    Note that the Hamiltonian in Examples \ref{example:assumptionV_notworking} and \ref{example:assumptionV_working} is not convex in $p$. For the sake of clarity, we used this example as it is easy to see the role of Assumption \ref{item:assumption:compact-sublevelsets}. Replacing the above $\cI$ with the Donsker-Varadhan rate function for the Ornstein--Uhlenbeck diffusion process centered in $p_2$, one does get a convex Hamiltonian for which the above argument applies.
\end{remark}

\section{Regularity of the Hamiltonian}\label{section:regularity-of-H}
We start showing that the Hamiltonian is continuous. This is the content of the following proposition.
\begin{proposition}[Continuity of the Hamiltonian]\label{prop:reg-of-H-and-L:continuity}
    Let $\mathcal{H} : E \times \mathbb{R}^d\to \mathbb{R}$ be the Hamiltonian defined in~\eqref{eq:results:variational_hamiltonian}, and suppose that Assumption~\ref{assumption:regularity:Lambda-I} is satisfied. Then the map $(x,p) \mapsto \cH(x,p)$ is continuous and the Lagrangian $(x,v) \mapsto \cL(x,v) := \sup_{p} \ip{p}{v} - \mathcal{H}(x,p)$ is lower semi-continuous.
\end{proposition}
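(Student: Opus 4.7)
The plan is to split into three sub-statements: lower semi-continuity of $\cH$, upper semi-continuity of $\cH$, and lower semi-continuity of $\cL$. The third one is essentially free once $\cH$ is continuous, because $\cL(x,v)=\sup_p[\langle p,v\rangle-\cH(x,p)]$ is a supremum over $p$ of functions that are continuous in $(x,v)$, hence lower semi-continuous. The lower semi-continuity of $\cH$ is handled directly by the recovery sequence provided by part (2) of Assumption~\ref{item:assumption:Gamma-convergence}: given $(x_n,p_n)\to (x,p)$ and any $\theta\in\Theta$, pick $\theta_n\to\theta$ with $\limsup_n(\cI-\Lambda)(x_n,p_n,\theta_n)\leq(\cI-\Lambda)(x,p,\theta)$, so that
\begin{equation*}
\liminf_n\cH(x_n,p_n)\geq\liminf_n(\Lambda-\cI)(x_n,p_n,\theta_n)\geq(\Lambda-\cI)(x,p,\theta),
\end{equation*}
and take $\sup_\theta$ on the right.

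The upper semi-continuity is the substantive part. Given $(x_n,p_n)\to(x,p)$, Assumption~\ref{item:assumption:regularity:boundness} ensures near-optimisers $\theta_n\in\Theta$ with $\cH(x_n,p_n)\leq(\Lambda-\cI)(x_n,p_n,\theta_n)+1/n$. The goal is to extract a convergent subsequence $\theta_{n_k}\to\theta^\star$: the liminf half of Assumption~\ref{item:assumption:Gamma-convergence} applied to $\cI-\Lambda$ then yields $\limsup_k(\Lambda-\cI)(x_{n_k},p_{n_k},\theta_{n_k})\leq(\Lambda-\cI)(x,p,\theta^\star)\leq\cH(x,p)$, finishing the argument. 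To get relative compactness of $\{\theta_n\}$, first use Assumption~\ref{item:assumption:statinary_meas} on a neighbourhood $U_x\times V_p$ to pick a continuous $g$ and $\tilde\theta_n\in\phi_g(x_n,p_n)$, providing the lower bound $\cH(x_n,p_n)\geq(\Lambda-\cI)(x_n,p_n,\tilde\theta_n)\geq-g(x_n,p_n)$, hence
\begin{equation*}
(\cI-\Lambda)(x_n,p_n,\theta_n)\leq g(x_n,p_n)+\tfrac{1}{n}
\end{equation*}
uniformly in $n$. Writing $p_n=\alpha(x_n-y_n)$ for some $\alpha>1$ and $y_n\to x-p/\alpha$, and applying Assumption~\ref{item:assumption:statinary_meas} once more at the shifted point to produce a complementary one-sided estimate, one places $\theta_n$ inside some $\Theta_{M_1,M_2,K}$ as in~\eqref{eq:def_levelsets}; Assumption~\ref{item:assumption:compact-sublevelsets} then yields relative compactness.

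I expect the delicate step to be the last one: producing the second one-sided bound at $(y_n,p_n)$ needed to feed $\theta_n$ into the sublevel set $\Theta_{M_1,M_2,K}$, since near-optimality at $(x_n,p_n)$ only controls the inequality on that side. Uniformity of the constants $M_1,M_2$ and of the compact $K$ in $n$ should follow from the continuity of $g$ and from $y_n\to x-p/\alpha$, but writing this cleanly, together with the choice of $\alpha>1$ that exploits the Euclidean identity $\partial_x\tfrac{\alpha}{2}d^2(x,y)=\alpha(x-y)=-\partial_y\tfrac{\alpha}{2}d^2(x,y)$ so as to realise $p_n$ as both of the momenta appearing in \eqref{eq:def_levelsets}, is where the bookkeeping concentrates. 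Everything else is a direct and formal application of the $\Gamma$-convergence axioms together with standard semi-continuity of suprema.
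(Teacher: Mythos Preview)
Your arguments for the lower semi-continuity of $\cH$ and of $\cL$ are correct and match the paper's proof verbatim.

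The gap is in the upper semi-continuity, precisely where you flag it. Having picked near-optimisers $\theta_n$ at $(x_n,p_n)$, you obtain the one-sided bound $(\cI-\Lambda)(x_n,p_n,\theta_n)\leq g(x_n,p_n)+1/n$. To place $\theta_n$ in a set $\Theta_{M_1,M_2,K}$ you also need
\[
(\Lambda-\cI)(y_n,p_n,\theta_n)\leq M_2
\]
for the shifted point $y_n=x_n-p_n/\alpha$. Your proposed fix, ``applying Assumption~\ref{item:assumption:statinary_meas} once more at the shifted point'', does not deliver this: Assumption~\ref{item:assumption:statinary_meas} only produces \emph{some} $\tilde\theta$ in a sublevel set at $(y_n,p_n)$; it says nothing about the value of $\Lambda-\cI$ at $(y_n,p_n,\theta_n)$ for your \emph{specific} $\theta_n$, which was selected by optimality at a different point. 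There is no mechanism in the assumptions that ties these together, so the detour through Assumption~\ref{item:assumption:compact-sublevelsets} cannot be closed.

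The paper avoids this entirely: Assumption~\ref{item:assumption:statinary_meas} already asserts that each $\phi_g(x,p)$ is \emph{compact}, not merely non-empty. Since any $\theta$ outside $\phi_g(x,p)$ gives a value of $\Lambda-\cI$ strictly below $-g(x,p)$, while $\phi_g(x,p)$ contains a point achieving at least $-g(x,p)$, the supremum defining $\cH$ may be restricted to $\phi_g(x,p)$. One then checks that the multifunction $(x,p)\mapsto\phi_g(x,p)$ is upper hemi-continuous (closed graph, via the liminf half of the $\Gamma$-convergence assumption and continuity of $g$) and applies the standard fact that the supremum of an upper semi-continuous function over an upper hemi-continuous compact-valued correspondence is upper semi-continuous (Lemma~17.30 in \cite{AlBo06}). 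Assumption~\ref{item:assumption:compact-sublevelsets} plays no role here; it is reserved for the comparison principle.
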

We will use the following technical result to establish upper semi-continuity of $\cH$.
	\begin{lemma}[Lemma 17.30 in \cite{AlBo06}] \label{lemma:upper_semi_continuity_abstract}
		Let $\cX$ and $\cY$ be two Polish spaces. Let $\phi : \cX \rightarrow \cK(\cY)$, where $\cK(\cY)$ is the space of non-empty compact subsets of $\cY$. Suppose that $\phi$ is upper hemi-continuous, that is if $x_n \rightarrow x$ and $y_n \rightarrow y$ and $y_n \in \phi(x_n)$, then $y \in \phi(x)$. 
		
		Let $f : \text{Graph} (\phi) \rightarrow \bR$ be upper semi-continuous. Then the map $m(x) = \sup_{y \in \phi(x)} f(x,y)$ is upper semi-continuous.
	\end{lemma}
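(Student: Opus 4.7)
The plan is to prove this by a direct sequential argument following the classical Berge maximum theorem strategy. Fix $x \in \cX$ and a sequence $x_n \to x$; it suffices to show that $\limsup_n m(x_n) \leq m(x)$. The argument has three steps: attainment of the supremum, extraction of a convergent maximizing subsequence, and passing to the limit via upper semi-continuity of $f$.

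First I would show that for each $n$ the supremum $m(x_n)$ is attained. The slice $f(x_n, \cdot) : \phi(x_n) \to \bR$ is upper semi-continuous: indeed, if $y_k \to y$ within $\phi(x_n)$, then $(x_n, y_k) \to (x_n, y)$ as a sequence in $\mathrm{Graph}(\phi)$, so $\limsup_k f(x_n, y_k) \leq f(x_n, y)$. Since $\phi(x_n)$ is compact and non-empty, Weierstrass yields a point $y_n \in \phi(x_n)$ with $f(x_n, y_n) = m(x_n)$.

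Second, I would extract a convergent subsequence of $\{y_n\}$. This is the main obstacle: the sequential closed-graph condition stated in the lemma is not by itself enough to force the existence of a limit point of $\{y_n\}$. However, upper hemi-continuity in the sense of Aliprantis--Border, combined with compact values, implies that whenever $x_n \to x$, every sequence $y_n \in \phi(x_n)$ admits a subsequence converging to some point of $\phi(x)$ (this is the content of the compactness half of Berge's theorem; in a Polish setting it can be established by observing that any open neighbourhood $U$ of the compact set $\phi(x)$ must eventually contain $\phi(x_n)$, and then exploiting metrizability together with total boundedness of $\phi(x)$ to obtain a compact enlargement containing the tail of $\{y_n\}$). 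Thus, passing to a subsequence I may assume $y_{n_k} \to y_\infty$, and the closed-graph hypothesis gives $y_\infty \in \phi(x)$.

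Third, I would conclude by upper semi-continuity of $f$ on $\mathrm{Graph}(\phi)$. Since $(x_{n_k}, y_{n_k}) \to (x, y_\infty)$ in $\mathrm{Graph}(\phi)$,
\begin{equation*}
    \limsup_{k \to \infty} m(x_{n_k}) \;=\; \limsup_{k \to \infty} f(x_{n_k}, y_{n_k}) \;\leq\; f(x, y_\infty) \;\leq\; \sup_{y \in \phi(x)} f(x,y) \;=\; m(x).
\end{equation*}
Because the original sequence $(x_n)$ was arbitrary, passing to a subsequence realizing $\limsup_n m(x_n)$ at the outset yields $\limsup_n m(x_n) \leq m(x)$, proving upper semi-continuity of $m$. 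The delicate point, as noted, is the extraction step, which is precisely where the compact-valued upper hemi-continuity hypothesis is essential; everything else is a clean application of the definitions.
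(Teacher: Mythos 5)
The paper gives no proof of this lemma --- it is quoted from Aliprantis--Border and used as a black box --- so there is no internal argument to compare against; your proposal is the standard Berge-maximum-theorem proof, and steps one and three (Weierstrass on the compact slice, then upper semi-continuity of $f$ along the convergent subsequence) are fine.

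The issue is exactly where you say it is, but it is worse than a ``delicate point'': with the hypothesis \emph{as stated in the lemma} --- the sequential closed-graph condition --- the conclusion is simply false, so no proof from that hypothesis alone can close the extraction step. Take $\cX=\cY=\bR$, $\phi(x)=\{1/x\}$ for $x\neq 0$, $\phi(0)=\{0\}$, and $f(x,y)=y^2/(1+y^2)$. The graph is sequentially closed (for $x_n\to 0$ the condition is vacuous since $1/x_n$ has no limit), all values are compact, $f$ is continuous, yet $m(x)=1/(1+x^2)\to 1$ while $m(0)=0$. Your fix --- replacing the stated condition by the genuine neighbourhood definition of upper hemi-continuity, under which compact-valuedness does yield a subsequence $y_{n_k}\to y_\infty\in\phi(x)$ --- is the right one and matches what Aliprantis--Border actually assume, so you end up proving a correct lemma, just not the one literally written. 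One further quibble: your parenthetical justification of the extraction via ``a compact enlargement containing the tail'' does not work in a general Polish space, where a compact set need not have any compact neighbourhood. The correct route is a covering argument: if $\{y_n\}$ had no limit point in $\phi(x)$, each $z\in\phi(x)$ would have a neighbourhood meeting only finitely many $y_n$; finitely many such neighbourhoods cover $\phi(x)$, their union $U$ is an open set with $\phi(x)\subseteq U$ and $\phi(x_n)\subseteq U$ eventually by upper hemi-continuity, contradicting that $U$ contains only finitely many $y_n$. With that repair the proof is complete; it would also be worth flagging that the paper's later application of the lemma (in Proposition \ref{prop:reg-of-H-and-L:continuity}) only verifies the closed-graph property, so the same gap resurfaces there.
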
 
	\begin{proof}[Proof of Proposition~\ref{prop:reg-of-H-and-L:continuity}]
We start by establishing the upper semi-continuity arguing on the basis of Lemma \ref{lemma:upper_semi_continuity_abstract}. Firstly, note that $f(x,p,\theta)=\Lambda(x,p,\theta)-\cI(x,p,\theta)$ is upper semi-continuous since, by Assumption \ref{assumption:regularity:Lambda-I} \ref{item:assumption:Gamma-convergence}, $-f(x,p,\theta)= \cI(x,p,\theta)-\Lambda(x,p,\theta)$ is lower semi-continuous.
Moreover, by Assumption \ref{assumption:regularity:Lambda-I} \ref{item:assumption:statinary_meas}, for all small neighborhood of $x$ and $p$ there exists a continuous function $g$ in these neighborhoods such that there exists $\theta^0(x,p)$ for which $\cI(x,p,\theta^0(x,p)) - \Lambda(x,p,\theta^0(x,p)) \leq g(x,p)$. Hence, we can write the supremum over $\theta \in \Theta$ as the supremum over $\theta \in \phi(x,p)$ where 
\begin{equation}
    \phi_g(x,p)=\overline{\left\{ \theta \in \Theta \, \vert \, \cI(x,p,\theta)-\Lambda(x,p,\theta)\leq g(x,p) \right\}}.
\end{equation}
 $\phi_g(x,p)$ is non empty, since $\theta^0(x,p) \in \phi_g(x,p)$, and it is compact for Assumption \ref{assumption:regularity:Lambda-I} \ref{item:assumption:statinary_meas}. We are left to show that $\phi$ is upper hemi-continuous. 
Thus, let $(x_n,p_n) \rightarrow (x,p)$ and $\theta_n \rightarrow \theta$ with $\theta_n \in \phi_g(x_n,p_n)$. We establish that $\theta \in \phi_g(x,p)$. By the definition of $\phi_g(x_n,p_n)$, $\cI(x_n,p_n,\theta_n)- \Lambda(x_n, p_n, \theta_n)\leq g(x,p)$. Then, by the lower semi-continuity of $\cI-\Lambda$, we can write
 \begin{align}
\cI(x,p,\theta) - \Lambda(x,p,\theta) &\leq \liminf_n \cI(x_n,p_n,\theta_n) - \Lambda(x_n,p_n,\theta_n) \leq \liminf_{n} g(x_n,p_n)=g(x,p),	
\end{align}
which implies indeed that $\theta  \in \phi_g(x,p)$. Thus, upper semi-continuity follows by an application of Lemma \ref{lemma:upper_semi_continuity_abstract}.

We prove now the lower semi--continuity of $\cH$. Precisely, we want to show that if $(x_n,p_n)\rightarrow (x,p)$ then $\liminf_n \cH(x_n,p_n)\geq \cH(x,p)$. Let $\theta\in\Theta$ be such that $\cH(x,p)=\Lambda(x,p,\theta)-\cI(x,p,\theta)$.
By Assumption \ref{assumption:regularity:Lambda-I}\ref{item:assumption:Gamma-convergence}, the function $\cI-\Lambda$ is $\Gamma$--convergent in the sense of Definition \ref{def:Gamma-convergence}. That means that there exist $\theta_n$ converging to $\theta$ such that $\limsup_n \cI(x_n,p_n,\theta_n)- \Lambda(x_n,p_n,\theta_n) \leq \cI(x,p,\theta)-\Lambda(x,p,\theta)$. Therefore, 
\begin{align}
    \liminf_n \cH(x_n,p_n)&\geq \liminf_n \left[\Lambda(x_n,p_n,\theta_n) - \cI(x_n,p_n,\theta_n)\right]\\
    &= -\limsup_n \left[\cI(x_n,p_n,\theta_n)-\Lambda(x_n,p_n,\theta_n) \right]\geq - \left[\cI(x,p,\theta)- \Lambda(x,p,\theta) \right]\\
    &=\cH(x,p),
\end{align}
establishing the lower semi-continuity of $\cH$ and, hence, the continuity.
Moreover, since the Lagrangian $\cL$ is the Legendre transform of $\cH$, it is lower semi-continuous.
\end{proof}

\section{Proofs of the main theorems} \label{section:comparison_principle}
In this section we establish Theorem \ref{theorem:comparison_principle_variational} and Theorem \ref{theorem:existence_of_viscosity_solution}. 

To prove the comparison principle for $f-\lambda \bfH f= h$ and $\partial_t f - \bfH f = 0$, we relate them to a set of Hamilton--Jacobi--Bellman equation with Hamiltonians constructed from $\bfH$. To do this, we introduce two operators $H_\dagger$ and $H_\ddagger$ that will be respectively an upper and lower bound for $\bfH$. The two new Hamiltonians, defined in Subsection \ref{subsection:definition_of_Hamiltonians}, are constructed in terms of a containment function $\Upsilon$ that allows us to restrict our analysis to a compact set.
 Schematically, we will establish the diagram in Figure \ref{SF:fig:CP-diagram-in-proof-of-CP}.
 
 \begin{figure}
\begin{center}
	\begin{tikzpicture}
		\matrix (m) [matrix of math nodes,row sep=1em,column sep=4em,minimum width=2em]
		{
			{ } &[7mm] H_\dagger \\
			\bfH & { } \\
			{ }  & H_\ddagger \\};
		\path[-stealth]
		(m-2-1) edge node [above] {sub \qquad { }} (m-1-2)
		(m-2-1) edge node [below] {super \qquad { }} (m-3-2);
		
		\begin{pgfonlayer}{background}
			\node at (m-2-2) [rectangle,draw=blue!50,fill=blue!20,rounded corners, minimum width=1cm, minimum height=2.5cm]  {comparison};
		\end{pgfonlayer}
	\end{tikzpicture}
  \end{center}
\captionsetup{width=.9\textwidth}
\caption{\label{SF:fig:CP-diagram-in-proof-of-CP}
In this diagram, an arrow connecting an operator $A$ with operator $B$ with subscript 'sub' means that viscosity subsolutions of $f - \lambda A f = h$ (or $\partial_t f - A f = 0$) are also viscosity subsolutions of $f - \lambda B f = h$ (or $\partial_t f -  Bf = 0)$. Similarly for arrows with a subscript 'super'.
}
\end{figure}
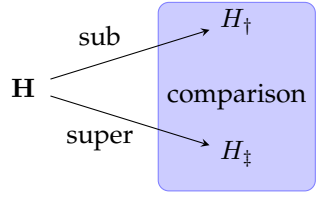

\smallskip
The arrows will be established in Subsection \ref{subsection:definition_of_Hamiltonians}. Finally, we will establish the comparison principle for $H_\dagger$ and $H_\ddagger$ in Subsection~\ref{subsection:proof_of_comparison_principle}. The combination of these two results imply the comparison principle for $\bfH$ as shown in the following.
\begin{proof}[Proof of Theorem~\ref{theorem:comparison_principle_variational}]
We only prove the first item. The proof for the time-dependent case follows the same lines.
 Fix $h_1,h_2 \in C_b(E)$ and $\lambda > 0$.
	Let $u,v$ be a viscosity sub- and supersolution to $f - \lambda \bfH f = h_1$ and  $f - \lambda \bfH f = h_2$ respectively. By Lemma \ref{lemma:viscosity_solutions_compactify2} proven in Section~\ref{subsection:definition_of_Hamiltonians}, $u$ and $v$ are a sub- and supersolution to $f - \lambda H_\dagger f = h_1$ and $f - \lambda H_\ddagger f = h_2$ respectively. Thus $\sup_E u - v \leq \sup_E h_1 - h_2$ by Proposition~\ref{prop:CP} of Section~\ref{subsection:proof_of_comparison_principle}.
\end{proof}

\subsection{Auxiliary operators} \label{subsection:definition_of_Hamiltonians}
In this section, we repeat the definition of $\bfH$, and introduce the operators $H_\dagger$ and $H_\ddagger$.
\begin{definition} \label{definition_effectiveH}
	The operator $\bfH \subseteq C_b^1(E) \times C_b(E)$ has domain $\cD(\bfH) = C_{cc}^\infty(E)$ and satisfies $\bfH f(x) = \cH(x, \nabla f(x))$, where $\cH$ is the map
	\begin{equation*}
		\mathcal{H}(x,p) = \sup_{\theta \in \Theta}\left[\Lambda(x,p,\theta) - \cI(x,p,\theta)\right].
	\end{equation*}
\end{definition}
We proceed by introducing $H_\dagger$ and $H_\ddagger$ serving as natural upper and lower bounds to $\bfH$. Recall  Assumption~\ref{item:assumption:compact_containment} and the constant $C_\Upsilon := \sup_{\theta}\sup_x \Lambda(x,\nabla \Upsilon(x),\theta)- \cI(x,\nabla \Upsilon(x),\theta)$ therein.
Denote by $C_\ell^\infty(E)$ the set of smooth functions on $E$ that have a lower bound and by $C_u^\infty(E)$ the set of smooth functions on $E$ that have an upper bound. The definitions below are motivated by the convexity of the map $p \mapsto \cH(x,p)$.
\begin{definition}[The operators $H_\dagger$ and $H_\ddagger$] \label{definiton:HdaggerHddagger}
	For $f \in C_\ell^\infty(E)$ and $\varepsilon \in (0,1)$  set 
	\begin{gather*}
		f^\varepsilon_\dagger := (1-\varepsilon) f + \varepsilon \Upsilon \\
		H_{\dagger,f}^\varepsilon(x) := (1-\varepsilon) \cH(x,\nabla f(x)) + \varepsilon C_\Upsilon.
	\end{gather*}
	and set
	\begin{equation*}
		H_\dagger := \left\{(f^\varepsilon_\dagger,H_{\dagger,f}^\varepsilon) \, \middle| \, f \in C_\ell^\infty(E), \varepsilon \in (0,1) \right\}.
	\end{equation*} 
	For $f \in C_u^\infty(E)$ and $\varepsilon \in (0,1)$  set 
	\begin{gather*}
		f^\varepsilon_\ddagger := (1+\varepsilon) f - \varepsilon \Upsilon \\
		H_{\ddagger,f}^\varepsilon(x) := (1+\varepsilon) \cH(x,\nabla f(x)) - \varepsilon C_\Upsilon.
	\end{gather*}
	and set
	\begin{equation*}
		H_\ddagger := \left\{(f^\varepsilon_\ddagger,H_{\ddagger,f}^\varepsilon) \, \middle| \, f \in C_u^\infty(E), \varepsilon \in (0,1) \right\}.
	\end{equation*} 
\end{definition}
\smallskip

The operator $\bfH$ is related to $H_\dagger, H_\ddagger$ by the following Lemma whose proof is standard and can be found for example in \cite{KrSc21}. We include it for completeness.  
	\begin{lemma}\label{lemma:viscosity_solutions_compactify2}
		Fix $\lambda > 0$ and $h \in C_b(E)$. 
		\begin{enumerate}[(a)]
			\item Every subsolution to $f - \lambda \bfH f = h$ is also a subsolution to $f - \lambda H_\dagger f = h$.
			\item Every supersolution to $f - \lambda \bfH f = h$ is also a supersolution to~$f-\lambda H_\ddagger f=~h$.
                \item Every subsolution to $\partial_t f - \bfH f = 0$ is also a subsolution to $\partial_t f - H_\dagger f = 0$.
                \item Every supersolution to $\partial_t f - \bfH f = 0$ is also a supersolution to~$\partial_t f - H_\ddagger f=0$.
		\end{enumerate}
	\end{lemma}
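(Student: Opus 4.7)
The proof rests on two ingredients: the convexity of $p \mapsto \cH(x,p)$ (Assumption \ref{assumption:regularity:Lambda-I}\ref{item:assumption:convexity}) and the defining inequality of the containment function $\Upsilon$, which together yield pointwise comparison between $\bfH$ applied to the perturbed test function and the operator $H_\dagger$ (or $H_\ddagger$). For part (a), given any $(f^\varepsilon_\dagger, H^\varepsilon_{\dagger, f}) \in H_\dagger$ with $f \in C_\ell^\infty(E)$, I would verify the key inequality
\begin{equation*}
\cH(x, \nabla f^\varepsilon_\dagger(x)) = \cH\bigl(x,(1-\varepsilon)\nabla f(x) + \varepsilon \nabla \Upsilon(x)\bigr) \leq (1-\varepsilon)\cH(x, \nabla f(x)) + \varepsilon c_\Upsilon = H^\varepsilon_{\dagger, f}(x),
\end{equation*}
using convexity and $\cH(x, \nabla \Upsilon(x)) \leq c_\Upsilon$. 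The analogous bound for part (b) writes $\nabla f(x) = \tfrac{1}{1+\varepsilon}\nabla f^\varepsilon_\ddagger(x) + \tfrac{\varepsilon}{1+\varepsilon}\nabla \Upsilon(x)$, and convexity gives $H^\varepsilon_{\ddagger, f}(x) \leq \cH(x, \nabla f^\varepsilon_\ddagger(x))$.

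The second step is to arrange that the optimizing sequence provided by the viscosity property stays inside a compact set. Here the containment function enters: since $f \in C_\ell^\infty(E)$ is bounded below, the sublevel sets $\{f^\varepsilon_\dagger \leq c\} \subseteq \{\Upsilon \leq (c + (1-\varepsilon)\|f^-\|_\infty)/\varepsilon\}$ are compact, so $u - f^\varepsilon_\dagger$ attains its supremum and any approximately optimizing sequence stays within a fixed compact $K \subseteq E$. The symmetric statement holds for $v - f^\varepsilon_\ddagger$ when $v$ is bounded below and $f \in C_u^\infty(E)$ is bounded above.

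The remaining technical step, which I expect to be the main obstacle, is transferring the subsolution property of $u$ (which is a priori only available for test functions in $\cD(\bfH) = C_{cc}^\infty(E)$) to the test function $f^\varepsilon_\dagger$, which need not be constant outside a compact set. My plan is a standard cutoff construction: for $n$ large enough that the relevant compact set $K$ lies inside a ball $B_n$ of radius $n$ contained in $E$, pick smooth cutoffs $\chi_n$ equal to $1$ on $B_n$ and supported in $B_{n+1}$, and define $\phi_n := \chi_n f^\varepsilon_\dagger + (1-\chi_n) C_n$ with constants $C_n$ chosen large enough that $u - \phi_n$ cannot be maximized outside $B_n$. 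Then $\phi_n \in C_{cc}^\infty(E)$, the maximum of $u - \phi_n$ agrees with that of $u - f^\varepsilon_\dagger$, and on $B_n$ we have $\nabla \phi_n = \nabla f^\varepsilon_\dagger$. Applying the subsolution property of $u$ with $\phi_n$ and combining with the convexity inequality of the first paragraph yields the desired sequence $x_n$ satisfying $\limsup_n u(x_n) - \lambda H^\varepsilon_{\dagger, f}(x_n) - h(x_n) \leq 0$.

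Parts (c) and (d) follow from the same two ingredients applied in the time-dependent setting: the convexity inequality is unchanged (the $\partial_t g$ term appears identically on both sides), while the cutoff argument is performed in the spatial variable only, with $(t_n, x_n)$ now ranging in $[0,T] \times E$. The supersolution arguments in (b) and (d) are fully symmetric, reversing inequalities and replacing suprema by infima, and using that $-f^\varepsilon_\ddagger = -(1+\varepsilon) f + \varepsilon \Upsilon$ has compact sublevel sets because $f$ is bounded above.
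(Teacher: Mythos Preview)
Your proposal is correct and follows essentially the same approach as the paper: convexity of $p\mapsto\cH(x,p)$ gives the pointwise inequality $\cH(x,\nabla f^\varepsilon_\dagger(x))\le H^\varepsilon_{\dagger,f}(x)$, the compact sublevel sets of $\Upsilon$ localise the optimisation, and a cutoff brings the test function into $C_{cc}^\infty(E)$.

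The one place where you differ is the cutoff construction. The paper does not use a spatial cutoff $\chi_n$; instead it truncates in the \emph{range} by composing with a smooth increasing function $\gamma:\bR\to\bR$ that equals the identity below a suitable level $M$ and is constant above $M+2$, setting $f_\varepsilon:=\gamma\circ f^\varepsilon_\dagger$. Since $\{f^\varepsilon_\dagger<M+2\}$ is contained in a sublevel set of $\Upsilon$ (hence compact), $f_\varepsilon$ is automatically constant outside a compact set and agrees with $f^\varepsilon_\dagger$ on $K$. This is slightly more robust than your spatial cutoff: your construction needs closed balls $B_{n+1}$ that are compact subsets of $E$, which may fail for general $E\subseteq\bR^d$ (e.g.\ $E$ open and bounded). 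Your argument can be repaired by basing the cutoff on sublevel sets of $\Upsilon$ rather than Euclidean balls, at which point it effectively becomes the paper's construction.
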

	\begin{proof}
 We only prove (a) as the other claims can be carried out analogously.
		Fix $\lambda > 0$ and $h \in C_b(E)$. Let $u$ be a subsolution to $f - \lambda \mathbf{H}f = h$. We prove it is also a subsolution to $f - \lambda H_\dagger f = h$.
		\smallskip
		
		Fix $\varepsilon > 0 $ and $f\in C_\ell^\infty(E)$ and let $(f^\varepsilon_\dagger,H^\varepsilon_{\dagger,f}) \in H_\dagger$ as in Definition \ref{definiton:HdaggerHddagger}. We will prove that there are $x_n\in E$ such that
		\begin{gather}
			\lim_{n\to\infty}\left(u-f_\dagger^\varepsilon\right)(x_n) = \sup_{x\in E}\left(u(x)-f_\dagger^\varepsilon(x) \right),\label{eqn:proof_lemma_conditions_for_subsolution_first}\\
			\limsup_{n\to\infty} \left[u(x_n)-\lambda H_{\dagger,f}^\varepsilon(x_n) - h(x_n)\right]\leq 0.\label{eqn:proof_lemma_conditions_for_subsolution_second}
		\end{gather}
		As the function $\left[u -(1-\varepsilon)f\right]$ is bounded from above and $\varepsilon \Upsilon$ has compact sublevel-sets, the sequence $x_n$ along which the first limit is attained can be assumed to lie in the compact set 
		\begin{equation*}
			K := \left\{x \, | \, \Upsilon(x) \leq \varepsilon^{-1} \sup_x \left(u(x) - (1-\varepsilon)f(x) \right)\right\}.
		\end{equation*}
		Set $M = \varepsilon^{-1} \sup_x \left(u(x) - (1-\varepsilon)f(x) \right)$. Let $\gamma : \bR \rightarrow \bR$ be a smooth increasing function such that
		\begin{equation*}
			\gamma(r) = \begin{cases}
				r & \text{if } r \leq M, \\
				M + 1 & \text{if } r \geq M+2.
			\end{cases}
		\end{equation*}
		Denote by $f_\varepsilon$ the function on $E$ defined by 
		\begin{equation*}
			f_\varepsilon(x) := \gamma\left((1-\varepsilon)f(x) + \varepsilon \Upsilon(x) \right).
		\end{equation*}
		By construction $f_\varepsilon$ is smooth and constant outside of a compact set and thus lies in $\cD(H) = C_{cc}^\infty(E)$. As $u$ is a viscosity subsolution for $f - \lambda Hf = h$ there exists a sequence $x_n \in K \subseteq E$ (by our choice of $K$) with
		\begin{gather}
			\lim_n \left(u-f_\varepsilon\right)(x_n) = \sup_x \left(u(x)-f_\varepsilon(x)\right), \label{eqn:visc_subsol_sup} \\
			\limsup_n \left[u(x_n) - \lambda \mathbf{H} f_\varepsilon(x_n) - h(x_n)\right] \leq 0. \label{eqn:visc_subsol_upperbound}
		\end{gather}
		As $f_\varepsilon$ equals $f_\dagger^\varepsilon$ on $K$, we have from \eqref{eqn:visc_subsol_sup} that also
		\begin{equation*}
			\lim_n \left(u-f_\dagger^\varepsilon\right)(x_n) = \sup_{x\in E}\left(u(x)-f_\dagger^\varepsilon(x)\right),
		\end{equation*}
		establishing~\eqref{eqn:proof_lemma_conditions_for_subsolution_first}. Convexity of $p \mapsto \mathcal{H}(x,p)$ yields for arbitrary points $x\in K$ the estimate
		\begin{align*}
			\mathbf{H} f_\varepsilon(x) &= \mathcal{H}(x,\nabla f_\varepsilon(x)) \\
			& \leq (1-\varepsilon) \mathcal{H}(x,\nabla f(x)) + \varepsilon \mathcal{H}(x,\nabla \Upsilon(x)) \\
			&\leq (1-\varepsilon) \mathcal{H}(x,\nabla f(x)) + \varepsilon C_\Upsilon = H^\varepsilon_{\dagger,f}(x).
		\end{align*} 
		Combining this inequality with \eqref{eqn:visc_subsol_upperbound} yields
		\begin{equation}
			\limsup_n \left[u(x_n) - \lambda H^\varepsilon_{\dagger,f}(x_n) - h(x_n)\right] 
			\leq \limsup_n \left[u(x_n) - \lambda \mathbf{H} f_\varepsilon(x_n) - h(x_n)\right] \leq 0,
		\end{equation}
		establishing \eqref{eqn:proof_lemma_conditions_for_subsolution_second}. This concludes the proof.
	\end{proof}

\subsection{The comparison principle} \label{subsection:proof_of_comparison_principle}
In the following we prove the comparison principle for the operators $H_\dagger$ and $H_\ddagger$.
\begin{proposition}\label{prop:CP} 
	Fix $\lambda > 0$ and $h_1,h_2 \in C_b(E)$. The following holds:
 \begin{enumerate}[(a)]
     \item Let $u$ be a viscosity subsolution to $f - \lambda H_\dagger f = h_1$ and let $v$ be a viscosity supersolution to $f - \lambda H_\ddagger f = h_2$. Then we have $\sup_x u(x) - v(x) \leq \sup_x h_1(x) - h_2(x)$.
     \item Let $u$ be a viscosity subsolution to $\partial_t f -  H_\dagger f = 0$ and let $v$ be a viscosity supersolution to $\partial_t f - H_\ddagger f = 0$. Then we have $\sup_{t\in [0,T], x} u(x,t) - v(x,t) \leq \sup_x{ u(x,0) - v(x,0)}$ for all $T>0$.
 \end{enumerate}
\end{proposition}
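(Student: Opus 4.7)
The plan is a doubling-of-variables argument, in which the containment function $\Upsilon$ localizes the optimizers to a compact set and the variational structure of $\cH$ is used only at the very end to invoke the continuity estimate for $\Lambda - \cI$. I describe the stationary case (a) in detail; the evolutionary case (b) follows by also doubling the time variable and adding a standard $\eta/(T-t)$-type penalty to handle the initial trace, reducing to the same Hamiltonian estimate as in (a).

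Fix a subsolution $u$ of $f - \lambda H_\dagger f = h_1$ and a supersolution $v$ of $f - \lambda H_\ddagger f = h_2$. For $\varepsilon, \alpha > 0$ set
\[
\Phi_{\alpha,\varepsilon}(x,y) \;=\; u(x) - v(y) - \tfrac{\alpha}{2}\, d^2(x,y) - \varepsilon\,\Upsilon(x) - \varepsilon\,\Upsilon(y).
\]
Boundedness of $u,v$ and compactness of sublevel sets of $\Upsilon$ force the supremum to be attained at points $(x_{\alpha,\varepsilon}, y_{\alpha,\varepsilon})$ in a compact set $K_\varepsilon \subseteq E$ independent of $\alpha$; a standard doubling lemma gives $\alpha\, d^2(x_{\alpha,\varepsilon}, y_{\alpha,\varepsilon}) \to 0$ and $\liminf_{\varepsilon\to 0}\liminf_{\alpha\to\infty} [u(x_{\alpha,\varepsilon}) - v(y_{\alpha,\varepsilon})] \geq \sup_E (u-v)$. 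I then apply the subsolution property of $u$ against the test function $f(x) = (1-\varepsilon)^{-1} \tfrac{\alpha}{2}\, d^2(x, y_{\alpha,\varepsilon}) \in C_\ell^\infty(E)$ and the supersolution property of $v$ against $g(y) = -(1+\varepsilon)^{-1} \tfrac{\alpha}{2}\, d^2(x_{\alpha,\varepsilon}, y) \in C_u^\infty(E)$, chosen so that the associated $f_\dagger^\varepsilon$ and $g_\ddagger^\varepsilon$ are exactly the functions for which $u - f_\dagger^\varepsilon$ and $v - g_\ddagger^\varepsilon$ are extremized at $x_{\alpha,\varepsilon}$ and $y_{\alpha,\varepsilon}$; since $\varepsilon \Upsilon$ gives them compact sublevel sets, Remark \ref{remark:existence of optimizers} yields actual optimizers. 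With $p_{\alpha,\varepsilon} = \alpha\, \partial_x \tfrac{1}{2} d^2(\cdot, y_{\alpha,\varepsilon})(x_{\alpha,\varepsilon}) = -\alpha\, \partial_y \tfrac{1}{2} d^2(x_{\alpha,\varepsilon}, \cdot)(y_{\alpha,\varepsilon})$, subtraction gives
\[
u(x_{\alpha,\varepsilon}) - v(y_{\alpha,\varepsilon}) \leq h_1(x_{\alpha,\varepsilon}) - h_2(y_{\alpha,\varepsilon}) + \lambda\Bigl[(1-\varepsilon)\cH\bigl(x_{\alpha,\varepsilon}, \tfrac{p_{\alpha,\varepsilon}}{1-\varepsilon}\bigr) - (1+\varepsilon)\cH\bigl(y_{\alpha,\varepsilon}, \tfrac{p_{\alpha,\varepsilon}}{1+\varepsilon}\bigr)\Bigr] + 2\lambda\varepsilon\, C_\Upsilon.
\]

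The core step is to bound the $\liminf_{\alpha\to\infty}$ of the bracketed difference of Hamiltonians by $O(\varepsilon)$. Using the variational representation $\cH = \sup_\theta (\Lambda - \cI)$, I pick for each $(\alpha,\varepsilon)$ a near-optimizer $\theta_{\alpha,\varepsilon}$ of the first supremum. The two viscosity inequalities above, together with the upper bound on $\Lambda - \cI$ in $\theta$ from Assumption \ref{assumption:regularity:Lambda-I}\ref{item:assumption:regularity:boundness}, show that $\theta_{\alpha,\varepsilon} \in \Theta_{M_1, M_2, K_\varepsilon}$ for suitable $M_1, M_2$ independent of $\alpha$; Assumption \ref{item:assumption:compact-sublevelsets} then makes $\{\theta_{\alpha,\varepsilon}\}_\alpha$ relatively compact. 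The collection $\{(x_{\alpha,\varepsilon}, y_{\alpha,\varepsilon}, \theta_{\alpha,\varepsilon})\}$ is therefore fundamental for $\Lambda - \cI$ in the sense of Definition \ref{def:results:continuity_estimate}, so the continuity estimate \ref{item:assumption:regularity:continuity_estimate} yields
\[
\liminf_{\alpha\to\infty} \bigl[(\Lambda - \cI)(x_{\alpha,\varepsilon}, p_{\alpha,\varepsilon}, \theta_{\alpha,\varepsilon}) - (\Lambda - \cI)(y_{\alpha,\varepsilon}, p_{\alpha,\varepsilon}, \theta_{\alpha,\varepsilon})\bigr] \leq 0.
\]
The residual mismatch between the evaluation momenta $p_{\alpha,\varepsilon}/(1 \pm \varepsilon)$ inside the Hamiltonian and $p_{\alpha,\varepsilon}$ inside the continuity estimate contributes only $O(\varepsilon)$, controlled by convexity of $p \mapsto \cH(x,p)$ and the continuity of $\cH$ on $K_\varepsilon \times \bR^d$ from Proposition \ref{prop:reg-of-H-and-L:continuity}. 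Sending $\alpha \to \infty$ and then $\varepsilon \to 0$ yields $\sup_E (u-v) \leq \sup_E (h_1 - h_2)$.

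The main obstacle lies in this last step: selecting a single $\theta_{\alpha,\varepsilon}$ that is simultaneously a near-optimizer of the \emph{scaled} Hamiltonian at momentum $p_{\alpha,\varepsilon}/(1-\varepsilon)$ and a legitimate control for the \emph{unscaled} continuity estimate at momentum $p_{\alpha,\varepsilon}$, and then absorbing the resulting $\varepsilon$-errors uniformly. This is precisely where Assumption \ref{assumption:regularity:Lambda-I}\ref{item:assumption:compact-sublevelsets} replaces the coercivity or bounded-cost hypotheses of earlier works: without it, once $\cI$ is allowed to depend on $p$, no compactness of $\theta_{\alpha,\varepsilon}$ is available and the passage from the continuity estimate for $\Lambda - \cI$ to a comparison statement for $\cH$ breaks down.
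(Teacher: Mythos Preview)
Your doubling functional $\Phi_{\alpha,\varepsilon}(x,y) = u(x) - v(y) - \tfrac{\alpha}{2} d^2(x,y) - \varepsilon\Upsilon(x) - \varepsilon\Upsilon(y)$ leads, exactly as you compute, to the Hamiltonians being evaluated at the \emph{scaled} momenta $p_{\alpha,\varepsilon}/(1\pm\varepsilon)$. Your proposed fix --- that the mismatch with the unscaled $p_{\alpha,\varepsilon}$ ``contributes only $O(\varepsilon)$, controlled by convexity of $p\mapsto\cH(x,p)$ and the continuity of $\cH$'' --- is not justified and is in fact where the argument breaks. The momenta $p_{\alpha,\varepsilon}$ are in general \emph{unbounded} in $\alpha$: the doubling lemma only gives $\alpha\,d^2(x_{\alpha,\varepsilon},y_{\alpha,\varepsilon})\to 0$, not a bound on $\alpha\,d(x_{\alpha,\varepsilon},y_{\alpha,\varepsilon})$. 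Since the paper explicitly dispenses with coercivity and Lipschitz growth of $\cH$, nothing prevents $\cH(x_{\alpha,\varepsilon},p_{\alpha,\varepsilon}/(1-\varepsilon)) - \cH(x_{\alpha,\varepsilon},p_{\alpha,\varepsilon})$ from being large, and neither convexity nor continuity of $\cH$ on $K_\varepsilon\times\bR^d$ supplies a uniform-in-$\alpha$ bound. Moreover, both the continuity estimate (Definition~\ref{def:results:continuity_estimate}) and the set $\Theta_{M_1,M_2,K}$ in Assumption~\ref{assumption:regularity:Lambda-I}\ref{item:assumption:compact-sublevelsets} are formulated for the \emph{exact} momenta $\alpha\,\dd_x\tfrac12 d^2$ and $-\alpha\,\dd_y\tfrac12 d^2$, so even the compactness of your $\theta_{\alpha,\varepsilon}$ is not directly available at the scaled momenta.

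The paper sidesteps this entirely by a different choice of doubling functional:
\[
\Phi_{\alpha,\varepsilon}(x,y) = \frac{u(x)}{1-\varepsilon} - \frac{v(y)}{1+\varepsilon} - \frac{\alpha}{2}d^2(x,y) - \frac{\varepsilon}{1-\varepsilon}\Upsilon(x) - \frac{\varepsilon}{1+\varepsilon}\Upsilon(y).
\]
With this rescaling of $u$ and $v$, the test functions in $\cD(H_\dagger)$ and $\cD(H_\ddagger)$ have underlying $f$ equal to $\tfrac{\alpha}{2}d^2(\cdot,y_{\alpha,\varepsilon})$ and $-\tfrac{\alpha}{2}d^2(x_{\alpha,\varepsilon},\cdot)$ (plus constants and a strict-maximum penalty), so $\cH$ is evaluated precisely at $p^1_{\alpha,\varepsilon}$ and $p^2_{\alpha,\varepsilon}$ with no $\varepsilon$-scaling. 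The continuity estimate then applies verbatim (this is isolated as Proposition~\ref{prop:continuity_estimate}), and the only $\varepsilon$-errors that remain are the harmless $\frac{\varepsilon}{1\pm\varepsilon}C_\Upsilon$ terms and the $\frac{1}{1\pm\varepsilon}$ prefactors on the bounded functions $u,v,h_1,h_2$. For part~(b) the paper similarly quadruples variables with a penalty $-\tfrac{\beta}{2}(t+s)+\beta T$ rather than an $\eta/(T-t)$ term, and derives a contradiction unless $t_{\alpha,\gamma}=0$ or $s_{\alpha,\gamma}=0$, but the essential Hamiltonian estimate is again Proposition~\ref{prop:continuity_estimate} at the unscaled momenta.
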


The strategy of the proof is the same for both equations. 
In both cases, the aim is to prove that it is possible to bound the difference of the Hamiltonians, in well-chosen sequences of points, by using the continuity estimate for $\Lambda - \cI$. This is the content of Proposition \ref{prop:continuity_estimate}. With this aim, we use two variants of a classical estimate, that was proven e.g. in \cite[Proposition 3.7]{CIL92}, given respectively in Lemma \ref{lemma:doubling_lemma} for the stationary equation and Lemma \ref{lemma:quadrupling_lemma} for the evolutionary case.

\begin{lemma}\label{lemma:doubling_lemma}
	Let $u$ be bounded and upper semi-continuous, let $v$ be bounded and lower semi-continuous and let $\Upsilon$ be a containment function.
	\smallskip
	
	Fix $\varepsilon > 0$. For every $\alpha >0$ there exists $(x_\alpha,y_\alpha)=(x_{\alpha,\varepsilon},y_{\alpha,\varepsilon}) \in E \times E$ such that
	\begin{multline}
		\frac{u(x_{\alpha})}{1-\varepsilon} - \frac{v(y_{\alpha})}{1+\varepsilon} - \frac{\alpha}{2} d^2(x_{\alpha},y_{\alpha}) - \frac{\varepsilon}{1-\varepsilon}\Upsilon(x_{\alpha}) -\frac{\varepsilon}{1+\varepsilon}\Upsilon(y_{\alpha}) \\
		= \sup_{x,y \in E} \left\{\frac{u(x)}{1-\varepsilon} - \frac{v(y)}{1+\varepsilon} -  \frac{\alpha }{2}d^2(x,y)  - \frac{\varepsilon}{1-\varepsilon}\Upsilon(x) - \frac{\varepsilon}{1+\varepsilon}\Upsilon(y)\right\}.
	\end{multline}
	Additionally, for every $\varepsilon > 0$ we have that
	\begin{enumerate}[(a)]
		\item The set $\{x_{\alpha}, y_{\alpha} \, | \,  \alpha > 0\}$ is relatively compact in $E$.
		\item All limit points of $\{(x_{\alpha},y_{\alpha})\}_{\alpha > 0}$ as $\alpha \rightarrow \infty$ are of the form $(z,z)$ and for these limit points we have $\frac{u(z)}{1-\eps} - \frac{v(z)}{1+\eps} - \frac{2\eps}{1-\eps^2}\Upsilon(z) = \sup_{x \in E} \left\{\frac{u(x)}{1-\eps} - \frac{v(x)}{1+\eps} - \frac{2\eps}{1-\eps^2}\Upsilon(x) \right\}$.
		\item We have 
		\[
		\lim_{\alpha \rightarrow \infty}  \alpha d^2(x_{\alpha},y_{\alpha}) = 0.
		\]
	\end{enumerate}
\end{lemma}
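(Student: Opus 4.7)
The plan is to execute the four steps of a standard doubling-of-variables argument, tailored to the extra containment penalty in $\Upsilon$. Throughout, write
\[
	G_\alpha(x,y) := \frac{u(x)}{1-\varepsilon} - \frac{v(y)}{1+\varepsilon} - \frac{\alpha}{2} d^2(x,y) - \frac{\varepsilon}{1-\varepsilon}\Upsilon(x) - \frac{\varepsilon}{1+\varepsilon}\Upsilon(y), \quad M_\alpha := \sup_{x,y} G_\alpha(x,y),
\]
and $M_\infty := \sup_{z}\bigl[\tfrac{u(z)}{1-\varepsilon} - \tfrac{v(z)}{1+\varepsilon} - \tfrac{2\varepsilon}{1-\varepsilon^2}\Upsilon(z)\bigr]$, which is finite by boundedness of $u, v$ and nonnegativity of $\Upsilon$.

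\textbf{Step 1: existence of optimizers.} The map $G_\alpha$ is upper semi-continuous: $u$ is u.s.c., $-v$ is u.s.c., $d^2$ is continuous, and $-\Upsilon$ is continuous by Definition~\ref{def:containment-new}. Since $u,v$ are bounded and $\Upsilon$ has compact sublevel sets, $G_\alpha(x,y) \to -\infty$ as $(x,y)$ leaves any sufficiently large compact in $E\times E$. Hence the supremum $M_\alpha$ is attained at some $(x_\alpha,y_\alpha)$.

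\textbf{Step 2: item (a).} Testing with $x=y=z$ gives $M_\alpha \geq M_\infty$ for every $\alpha$. Combining with $-\tfrac{\alpha}{2}d^2 \leq 0$, $u \leq \sup u$ and $-v \leq -\inf v$, I obtain
\[
	\frac{\varepsilon}{1-\varepsilon}\Upsilon(x_\alpha) + \frac{\varepsilon}{1+\varepsilon}\Upsilon(y_\alpha) \leq \frac{\sup u}{1-\varepsilon} - \frac{\inf v}{1+\varepsilon} - M_\infty,
\]
a bound independent of $\alpha$. Compactness of the sublevel sets of $\Upsilon$ then yields (a).

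\textbf{Step 3: item (c).} Since $\alpha \mapsto G_\alpha(x,y)$ is non-increasing pointwise, $M_\alpha$ is non-increasing in $\alpha$, and since $M_\alpha \geq M_\infty$ it has a finite limit $L$. Evaluating $G_{\alpha/2}$ at the optimizer $(x_\alpha, y_\alpha)$:
\[
	M_{\alpha/2} \; \geq \; G_{\alpha/2}(x_\alpha,y_\alpha) \; = \; G_\alpha(x_\alpha,y_\alpha) + \frac{\alpha}{4}d^2(x_\alpha,y_\alpha) \; = \; M_\alpha + \frac{\alpha}{4}d^2(x_\alpha,y_\alpha),
\]
so $\alpha\, d^2(x_\alpha,y_\alpha) \leq 4(M_{\alpha/2} - M_\alpha) \to 0$.

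\textbf{Step 4: item (b).} Let $(x^*,y^*)$ be a limit point of $(x_{\alpha_n},y_{\alpha_n})$ along some $\alpha_n \to \infty$. Step 3 forces $d(x^*,y^*) = 0$, i.e.\ $x^*=y^*$. Dropping the nonpositive term $-\tfrac{\alpha}{2}d^2$ and using u.s.c.\ of $u$, $-v$, $-\Upsilon$,
\[
	M_\infty \;\leq\; \limsup_{n} M_{\alpha_n} \;\leq\; \frac{u(x^*)}{1-\varepsilon} - \frac{v(x^*)}{1+\varepsilon} - \frac{2\varepsilon}{1-\varepsilon^2}\Upsilon(x^*) \;\leq\; M_\infty,
\]
which yields equality throughout, showing that $x^*$ attains $M_\infty$.

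The argument is essentially routine; the only subtle point is Step~3, where one must invoke the monotonicity of $M_\alpha$ together with its boundedness from below by $M_\infty$ to upgrade the trivial boundedness of $\tfrac{\alpha}{2}d^2(x_\alpha,y_\alpha)$ (coming from Step~2) to the sharper conclusion $\alpha d^2(x_\alpha,y_\alpha) \to 0$. Everything else is a direct u.s.c.\ / boundedness check once $G_\alpha$ is set up with the correct weighting of $\Upsilon$.
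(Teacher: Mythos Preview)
Your proof is correct and follows the classical doubling-of-variables argument. The paper itself does not give a proof of this lemma; it simply cites \cite[Proposition~3.7]{CIL92} as the source of the estimate, so there is no proof in the text to compare against beyond noting that your Steps~1--4 are exactly the standard route.
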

 
\begin{lemma}\label{lemma:quadrupling_lemma}
Let $u$ be bounded and upper semi-continuous, let $v$ be bounded and lower semi-continuous and let $\Upsilon$ be a containment function.
	\smallskip
	
	Fix $\varepsilon > 0$, $\beta>0$ and $T>0$. For every $\alpha >0$ and $\gamma >0$ there exists $$(x_{\alpha,\gamma},t_{\alpha,\gamma}, y_{\alpha,\gamma}, s_{\alpha,\gamma}) = (x_{\alpha,\gamma,\eps,\beta}, t_{\alpha,\gamma,\eps,\beta}, y_{\alpha,\gamma,\eps,\beta}, s_{\alpha,\gamma,\eps,\beta})\in E\times [0,T], \times E \times [0,T]$$ such that
 \begin{multline}
\frac{u(t_{\alpha,\gamma}, x_{\alpha,\gamma})}{1-\varepsilon} - \frac{v(s_{\alpha,\gamma},y_{\alpha,\gamma})}{1+\varepsilon} - \frac{\alpha}{2}d^2(x_{\alpha,\gamma},y_{\alpha,\gamma}) - \frac{\gamma}{2}(s_{\alpha,\gamma}-t_{\alpha,\gamma})^2
    - \frac{\varepsilon}{1-\varepsilon} \Upsilon(x_{\alpha,\gamma}) - \frac{\varepsilon}{1+\varepsilon}\Upsilon(y_{\alpha,\gamma})
    \\ 
    - \frac{\beta}{2} (t_{\alpha,\gamma}+s_{\alpha,\gamma}) + \beta T \\
		= \sup_{s,t \in [0,T],x,y} \biggl\{\frac{u(t,x)}{1-\varepsilon} - \frac{v(s,y)}{1+\varepsilon} - \frac{\alpha}{2}d^2(x,y) - \frac{\alpha}{2}(s-t)^2
    - \frac{\varepsilon}{1-\varepsilon} \Upsilon(x) - \frac{\varepsilon}{1+\varepsilon}\Upsilon(y) - \frac{\beta}{2} (t+s) + \beta T \biggr\}.
 \end{multline}
	Additionally, for every $\varepsilon > 0$ and $\beta>0$ we have that
 \begin{enumerate}[(a)]
		\item For any $\gamma>0$,
  \begin{enumerate}[(i)]
      \item the set $\{x_{\alpha,\gamma}, y_{\alpha,\gamma} \, | \,  \alpha > 0\}$ is relatively compact in $E$,
      \item we have $$\lim_{\alpha \rightarrow \infty}  \alpha d^2(x_{\alpha,\gamma},y_{\alpha,\gamma}) = 0,$$
      \item all limit points of $\{(x_{\alpha,\gamma},y_{\alpha,\gamma},t_{\alpha,\gamma},s_{\alpha,\gamma})\}_{\alpha > 0}$ as $\alpha \rightarrow \infty$ are of the form $(z_{\gamma},z_{\gamma},t_\gamma,s_\gamma)$. 
      \end{enumerate}
	\item Let $(z_\gamma,z_\gamma,t_\gamma,s_\gamma)$ be a limit point as in (a)(iii). Then
 \begin{enumerate}[(i)]
     \item the set $\{z_\gamma \, | \, \gamma >0\}$ is relatively compact in $E$,
     \item all limit point of $\{(z_\gamma,z_\gamma,t_\gamma,s_\gamma)\}_{\gamma>0}$ as $\gamma \rightarrow \infty$ are of the form $(z,z,w,w)$ and for these limit points we have $$\frac{u(w,z)}{1-\eps} - \frac{v(w,z)}{1+\eps} - \frac{2\eps}{1-\eps^2}\Upsilon(z) - \beta ( T - w) = \sup_{x \in E, t\in[0,T]} \left\{\frac{u(t,x)}{1-\eps} - \frac{v(t,x)}{1+\eps} -\frac{2\eps}{1-\eps^2}\Upsilon (x) - \beta (T - t) \right\}.$$
 \end{enumerate}
	\end{enumerate}  
\end{lemma}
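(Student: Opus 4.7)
I would treat the lemma as a two-stage iteration of the doubling-of-variables estimate already recorded in Lemma~\ref{lemma:doubling_lemma}, applying it first to $(x,y)$ with $(t,s)$ held as parameters and then to $(t,s)$ after the spatial coordinates have collapsed. Writing the objective as $\Phi_{\alpha,\gamma}(t,x,s,y)$, upper semi-continuity of $\Phi_{\alpha,\gamma}$ is immediate from USC of $u$ and LSC of $v$; since $t,s\in[0,T]$ and the penalty $-\frac{\varepsilon}{1-\varepsilon}\Upsilon(x)-\frac{\varepsilon}{1+\varepsilon}\Upsilon(y)$ combines with boundedness of $u,v$ and compact sublevel sets of $\Upsilon$ to restrict the sup to $\{\Upsilon(x),\Upsilon(y)\le M_0\}$ for some $\alpha,\gamma$-independent $M_0$, existence of an optimizer and the relative compactness in (a)(i) both fall out of Weierstrass.

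\textbf{Spatial limit.} For (a)(ii) I would use the standard Crandall--Ishii--Lions refinement: comparing $\Phi_{\alpha,\gamma}$ at its maximizer with the value at the diagonal-in-space point $(t_{\alpha,\gamma},x_{\alpha,\gamma},s_{\alpha,\gamma},x_{\alpha,\gamma})$ bounds $\frac{\alpha}{2}d^2(x_{\alpha,\gamma},y_{\alpha,\gamma})$ by increments of $v$ and $\Upsilon$ that remain bounded as $\alpha\to\infty$; subsequent extraction of convergent subsequences together with LSC of $v$ and continuity of $\Upsilon$ upgrades boundedness to $\alpha d^2(x_{\alpha,\gamma},y_{\alpha,\gamma})\to 0$. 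This is essentially the argument of Lemma~\ref{lemma:doubling_lemma}, only with an extra inert parameter $\gamma$ and extra time variables present in the supremum. Part (a)(iii) is then automatic since $d(x_{\alpha,\gamma},y_{\alpha,\gamma})\to 0$ forces any spatial limit point to coincide.

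\textbf{Temporal limit and main difficulty.} Given a spatial limit point $(z_\gamma,z_\gamma,t_\gamma,s_\gamma)$ supplied by (a)(iii), semi-continuity promotes it to a maximizer of the reduced functional obtained after the spatial coordinates have collapsed. I would then apply exactly the same doubling argument to the time pair $(t,s)$: the containment function supplies the relative compactness of $\{z_\gamma\}$ required by (b)(i), while comparison with the diagonal-in-time point yields $\gamma(s_\gamma-t_\gamma)^2\to 0$ and therefore the required form $(z,z,w,w)$ for $\gamma$-limit points. The supremum identity in (b)(ii) follows by sandwiching: substituting $t=s$ and $x=y$ into the original sup gives one inequality, and upper semi-continuity of $u-v$ together with the vanishing of both quadratic penalties on the collapsed diagonal gives the other. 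The hard part will be purely bookkeeping --- one must consistently apply the correct semi-continuity (USC of $u$, LSC of $v$) at each of the two limits, and must verify that the term $-\frac{\beta}{2}(t+s)+\beta T$, which survives into the final identity but plays no role in the compactness argument, is carried through without sign errors. No genuinely new idea is needed beyond Lemma~\ref{lemma:doubling_lemma}.
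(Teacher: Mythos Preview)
The paper does not actually supply its own proof of this lemma: it introduces Lemmas~\ref{lemma:doubling_lemma} and~\ref{lemma:quadrupling_lemma} as ``two variants of a classical estimate, that was proven e.g.\ in \cite[Proposition 3.7]{CIL92}'' and then moves straight on to Proposition~\ref{prop:continuity_estimate}. So there is nothing in the paper to compare your argument against beyond the implicit appeal to the Crandall--Ishii--Lions doubling estimate.

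Your plan is the standard one and is correct. The only point I would tighten is the passage from (a) to (b): when you write that ``semi-continuity promotes $(z_\gamma,z_\gamma,t_\gamma,s_\gamma)$ to a maximizer of the reduced functional,'' make explicit the monotonicity-in-$\alpha$ sandwich. Namely, the maximum $M_{\alpha,\gamma}$ of $\Phi_{\alpha,\gamma}$ is nonincreasing in $\alpha$ and bounded below by $\sup_{t,s,x}\Phi_{\alpha,\gamma}(t,x,s,x)$, which is $\alpha$-independent; upper semi-continuity of the objective at the limit point then forces $(z_\gamma,z_\gamma,t_\gamma,s_\gamma)$ to attain this common value, and the quadratic penalty $\frac{\alpha}{2}d^2(x_{\alpha,\gamma},y_{\alpha,\gamma})$ is squeezed to zero in the process. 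The same mechanism, applied to $\gamma$, gives both $\gamma(s_\gamma-t_\gamma)^2\to 0$ and the supremum identity in (b)(ii) simultaneously. With that bookkeeping spelled out, your two-stage iteration of Lemma~\ref{lemma:doubling_lemma} is exactly what the cited classical result amounts to.
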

In the following proposition we prove the continuity estimate for $\cH$ by using the continuity estimate of $\Lambda - \cI$.
\begin{proposition}\label{prop:continuity_estimate}
    Consider $(x_{\alpha,\eps},y_{\alpha,\eps})$ found in Lemma \ref{lemma:doubling_lemma} or Lemma \ref{lemma:quadrupling_lemma} (for which we fix $\gamma$ and $\beta$) and denote $p^1_{\alpha,\eps} := \alpha \dd_x \frac{1}{2}d^2(\cdot,y_{\alpha,\eps})(x_{\alpha,\eps})$ and $p^2_{\alpha,\eps} := - \alpha \dd_y \frac{1}{2}d^2(x_{\alpha,\eps},\cdot)(y_{\alpha,\eps})$. Suppose that 
    \begin{equation}\label{subsolution_bound}
        \inf_\alpha \cH(x_{\alpha,\eps},p^1_{\alpha,\eps}) > - \infty
    \end{equation}
    and 
    \begin{equation}\label{supersolution_bound}
        \sup_\alpha \cH(y_{\alpha,\eps},p^2_{\alpha,\eps}) < \infty.
    \end{equation}
 
    Then, for all $\eps>0$ there exists  a sequence $\alpha(\eps)\to \infty$, such that 
    \begin{equation}\label{eqn:difference_hamiltonian}
        \liminf_{\eps\to 0}\liminf_{\alpha\to\infty}\cH(x_\alpha,p^1_{\alpha,\eps})- \cH(y_{\alpha,\eps},p^2_{\alpha,\eps})\leq 0.
    \end{equation}
\end{proposition}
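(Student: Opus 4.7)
The plan is to reduce the bound on the difference of Hamiltonians to the continuity estimate for $\Lambda - \cI$ (Assumption \ref{assumption:regularity:Lambda-I} \ref{item:assumption:regularity:continuity_estimate}), by selecting an appropriate sequence of controls $\theta_{\alpha,\eps}$. First, for each $\alpha, \eps$, I would choose $\theta_{\alpha,\eps} \in \Theta$ as a (near-)maximizer in the variational representation
\begin{equation*}
\cH(x_{\alpha,\eps},p^1_{\alpha,\eps}) = \sup_{\theta \in \Theta}\left[\Lambda(x_{\alpha,\eps},p^1_{\alpha,\eps},\theta) - \cI(x_{\alpha,\eps},p^1_{\alpha,\eps},\theta)\right],
\end{equation*}
so that, up to a vanishing error $\delta_\alpha \to 0$, one has the bound
\begin{equation*}
\cH(x_{\alpha,\eps},p^1_{\alpha,\eps}) - \cH(y_{\alpha,\eps},p^2_{\alpha,\eps}) \leq \bigl[(\Lambda-\cI)(x_{\alpha,\eps},p^1_{\alpha,\eps},\theta_{\alpha,\eps}) - (\Lambda-\cI)(y_{\alpha,\eps},p^2_{\alpha,\eps},\theta_{\alpha,\eps})\bigr] + \delta_\alpha.
\end{equation*}
This reduces the problem to controlling the right-hand side by means of the continuity estimate applied to $\Lambda - \cI$ with control $\theta_{\alpha,\eps}$.

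Next, I would verify that $(x_{\alpha,\eps}, y_{\alpha,\eps}, \theta_{\alpha,\eps})$ forms a \emph{fundamental collection} in the sense of Definition \ref{def:results:continuity_estimate}. Condition \ref{item:def:continuity_estimate:2} and the $E$-part of \ref{item:def:continuity_estimate:1} are immediate from Lemma \ref{lemma:doubling_lemma} (or \ref{lemma:quadrupling_lemma}): the optimizers lie in a compact set, $\alpha d^2(x_{\alpha,\eps}, y_{\alpha,\eps}) \to 0$, and limit points have $d(x,y)=0$. Condition \ref{item:def:continuity_estimate:3} follows from the hypotheses \eqref{subsolution_bound}-\eqref{supersolution_bound}: the hypothesis $\sup_\alpha \cH(y_{\alpha,\eps}, p^2_{\alpha,\eps}) < \infty$ together with $\Lambda - \cI \leq \cH$ yields the upper bound on $\Lambda - \cI$ at $(y_{\alpha,\eps}, p^2_{\alpha,\eps}, \theta_{\alpha,\eps})$; the choice of $\theta_{\alpha,\eps}$ as near-optimizer combined with $\inf_\alpha \cH(x_{\alpha,\eps}, p^1_{\alpha,\eps}) > -\infty$ gives the lower bound on $\Lambda - \cI$ at $(x_{\alpha,\eps}, p^1_{\alpha,\eps}, \theta_{\alpha,\eps})$.

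The main obstacle is the $\Theta$-part of \ref{item:def:continuity_estimate:1}, namely showing that the controls $\theta_{\alpha,\eps}$ remain in a compact subset of $\Theta$ for each fixed $\eps$. This is precisely the role of Assumption \ref{assumption:regularity:Lambda-I} \ref{item:assumption:compact-sublevelsets}. Setting $M_1 := -\inf_\alpha \cH(x_{\alpha,\eps},p^1_{\alpha,\eps}) + \sup_\alpha \delta_\alpha$ and $M_2 := \sup_\alpha \cH(y_{\alpha,\eps},p^2_{\alpha,\eps})$, both finite by hypothesis, the estimates from the previous paragraph show that $\theta_{\alpha,\eps}$ lies in the set $\Theta_{M_1,M_2,K_\eps}$ defined in \eqref{eq:def_levelsets}, with $K_\eps$ the compact set from Lemma \ref{lemma:doubling_lemma}. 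By Assumption \ref{item:assumption:compact-sublevelsets}, this set is relatively compact, so \ref{item:def:continuity_estimate:1} is verified.

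Having established that $(x_{\alpha,\eps}, y_{\alpha,\eps}, \theta_{\alpha,\eps})$ is fundamental for $\Lambda - \cI$, I would apply the continuity estimate \eqref{equation:Xi_negative_liminf} to conclude
\begin{equation*}
\liminf_{\alpha \to \infty} \bigl[(\Lambda-\cI)(x_{\alpha,\eps},p^1_{\alpha,\eps},\theta_{\alpha,\eps}) - (\Lambda-\cI)(y_{\alpha,\eps},p^2_{\alpha,\eps},\theta_{\alpha,\eps})\bigr] \leq 0,
\end{equation*}
for every $\eps > 0$. Combined with the reduction from the first paragraph and choosing $\delta_\alpha \to 0$, this yields $\liminf_{\alpha \to \infty}\bigl[\cH(x_{\alpha,\eps},p^1_{\alpha,\eps}) - \cH(y_{\alpha,\eps},p^2_{\alpha,\eps})\bigr] \leq 0$ for each $\eps$, so that extracting a suitable sequence $\alpha(\eps) \to \infty$ realizing (approximately) the inner $\liminf$ and then taking $\liminf_{\eps \to 0}$ delivers \eqref{eqn:difference_hamiltonian}.
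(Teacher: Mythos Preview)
Your proposal is correct and follows essentially the same approach as the paper: choose (near-)optimizers $\theta_{\alpha,\eps}$ in the variational formula for $\cH(x_{\alpha,\eps},p^1_{\alpha,\eps})$, verify that $(x_{\alpha,\eps},y_{\alpha,\eps},\theta_{\alpha,\eps})$ is fundamental for $\Lambda-\cI$ using the doubling lemma for \ref{item:def:continuity_estimate:1}--\ref{item:def:continuity_estimate:2}, the hypotheses \eqref{subsolution_bound}--\eqref{supersolution_bound} for \ref{item:def:continuity_estimate:3}, and Assumption \ref{assumption:regularity:Lambda-I}\ref{item:assumption:compact-sublevelsets} for the relative compactness of the controls, then conclude via the continuity estimate \ref{item:assumption:regularity:continuity_estimate}. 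The only cosmetic difference is that the paper claims the supremum is attained (invoking upper semi-continuity from \ref{item:assumption:Gamma-convergence} and boundedness from \ref{item:assumption:regularity:boundness}) and works with exact optimizers, whereas you work with near-optimizers and a vanishing error $\delta_\alpha$; both lead to the same conclusion.
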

\begin{proof}
    We only prove the statement for $(x_{\alpha,\eps},y_{\alpha,\eps})$ found in Lemma \ref{lemma:doubling_lemma}. The proof in the context of Lemma \ref{lemma:quadrupling_lemma} is analogous.
    The proof is given in two steps. We sketch the steps, before giving full proof.
    
    \underline{\emph{Step 1}}: We will show that there are controls $\theta_{\alpha,\eps}$ such that 
	\begin{equation} \label{eqn:choice_control}
		\mathcal{H}(x_{\alpha,\eps},p^1_{\alpha,\eps}) = \Lambda(x_{\alpha,\eps},p^1_{\alpha,\eps},\theta_{\alpha,\eps}) - \mathcal{I}(x_{\alpha,\eps},p^1_{\alpha,\eps},\theta_{\alpha,\eps}).
	\end{equation}
	As a consequence we have
	\begin{multline} \label{eqn:basic_decomposition_Hamiltonian_difference1}
		\mathcal{H}(x_{\alpha,\eps},p^1_{\alpha,\eps})-
		\mathcal{H}(y_{\alpha,\eps},p^2_{\alpha,\eps})
		\leq 
		\Lambda(x_{\alpha,\eps},p^1_{\alpha,\eps},\theta_{\alpha,\eps})-
		\Lambda(y_{\alpha,\eps},p^2_{\alpha,\eps},\theta_{\alpha,\eps})\\
		+\mathcal{I}(y_{\alpha,\eps},p^2_{\alpha,\eps},\theta_{\alpha,\eps})-
		\mathcal{I}(x_{\alpha,\eps},p^1_{\alpha,\eps},\theta_{\alpha,\eps}).
	\end{multline}
	For establishing \eqref{eqn:difference_hamiltonian}, it is sufficient to bound the differences in \eqref{eqn:basic_decomposition_Hamiltonian_difference1} by using Assumption \ref{assumption:regularity:Lambda-I}\ref{item:assumption:regularity:continuity_estimate}.
	
	\underline{\emph{Step 2}}: We verify the conditions to apply the continuity estimate, Assumption \ref{assumption:regularity:Lambda-I} \ref{item:assumption:regularity:continuity_estimate} which then concludes the proof.
 	\smallskip

 \underline{\emph{Proof of Step 1}}: Recall that $\mathcal{H}(x,p)$ is given by
	\begin{equation*}
		\mathcal{H}(x,p) = \sup_{\theta \in \Theta}\left[\Lambda(x,p,\theta) - \mathcal{I}(x,p,\theta) \right].
	\end{equation*}
	Since $\Lambda(x_{\alpha,\eps},p^1_{\alpha,\eps},\cdot) - \cI(x_{\alpha,\eps},p^1_{\alpha,\eps},\cdot) : \Theta \to \mathbb{R}$ is upper semi-continuous and bounded by \ref{assumption:regularity:Lambda-I}\ref{item:assumption:Gamma-convergence}  and \ref{assumption:regularity:Lambda-I}\ref{item:assumption:regularity:boundness}, there exists an optimizer $\theta_{\alpha,\eps} \in\Theta$ such that
	\begin{equation} \label{eqn:choice_of_optimal_measure}
		\mathcal{H}(x_{\alpha,\eps},p^1_{\alpha,\eps}) = \Lambda(x_{\alpha,\eps},p^1_{\alpha,\eps},\theta_{\alpha,\eps}) - \mathcal{I}(x_{\alpha,\eps},p^1_{\alpha,\eps},\theta_{\alpha,\eps})  .
	\end{equation}
	Choosing the same point in the supremum of the second term $\mathcal{H}(y_{\alpha,\eps},p^2_{\alpha,\eps})$, we obtain for all $\varepsilon > 0$ and $\alpha > 0$ the estimate
	\begin{multline} \label{eqn:basic_decomposition_Hamiltonian_difference}
		\mathcal{H}(x_{\alpha,\eps},p^1_{\alpha,\eps})-
		\mathcal{H}(y_{\alpha,\eps},p^2_{\alpha,\eps})
		\leq 
		\Lambda(x_{\alpha,\eps},p^1_{\alpha,\eps},\theta_{\alpha,\eps})-
		\Lambda(y_{\alpha,\eps},p^2_{\alpha,\eps},\theta_{\alpha,\eps})\\
		+ \mathcal{I}(y_{\alpha,\eps},p^1_{\alpha,\eps},\theta_{\alpha,\eps})-
		\mathcal{I}(x_{\alpha,\eps},p^2_{\alpha,\eps},\theta_{\alpha,\eps})  .
	\end{multline}

 \smallskip

 \underline{\emph{Proof of Step 2}}: 
	We will construct for each $\varepsilon > 0$ a sequence $\alpha = \alpha(\varepsilon) \rightarrow \infty$ such that the collection $(x_{\alpha,\eps},y_{\alpha,\eps},\theta_{\alpha,\eps})$ is fundamental for $\Lambda-\cI$ in the sense of Definition \ref{def:results:continuity_estimate}. We thus need to verify for each $\varepsilon > 0$ 
	\begin{enumerate}[(i)]
		\item \label{item:fundamental_liminf}
		\begin{equation}
			\inf_\alpha \Lambda(x_{\alpha,\eps},p^1_{\alpha,\eps},\theta_{\alpha,\eps}) - \cI(x_{\alpha,\eps},p^1_{\alpha,\eps},\theta_{\alpha,\eps}) > - \infty,\label{eq:proof-CP:Vx-unif-bound-below_first}
		\end{equation}
		\item \label{item:fundamental_limsup}
		\begin{equation}
			\sup_{\alpha}\Lambda(y_{\alpha,\eps},p^2_{\alpha,\eps},\theta_{\alpha,\eps})-\cI(y_{\alpha,\eps},p^2_{\alpha,\eps},\theta_{\alpha,\eps}) < \infty
		\end{equation}
		\item \label{item:fundamental_compactcontrols} The set of controls $\theta_{\alpha,\eps}$ is relatively compact.
	\end{enumerate}
	
	We will first establish \ref{item:fundamental_liminf} and \ref{item:fundamental_limsup} for all $\alpha$. Then, \ref{item:fundamental_compactcontrols} will follow from \ref{item:fundamental_liminf} and \ref{item:fundamental_limsup} and Assumption \ref{assumption:regularity:Lambda-I}\ref{item:assumption:compact-sublevelsets}. 
	
By \eqref{subsolution_bound} and \eqref{eqn:choice_of_optimal_measure}, 
\begin{equation}
    - \infty < \inf_\alpha \cH(x_{\alpha,\eps}, p^1_{\alpha,\eps}) = \inf_\alpha \Lambda(x_{\alpha,\eps},p^1_{\alpha,\eps},\theta_{\alpha,\eps}) - \cI (x_{\alpha,\eps},p^1_{\alpha,\eps},\theta_{\alpha,\eps})
\end{equation}
establishing \ref{item:fundamental_liminf}.

By \eqref{supersolution_bound},
\begin{equation}
    \sup_\alpha \Lambda(y_{\alpha,\eps},p^2_{\alpha,\eps},\theta_{\alpha,\eps}) - \cI (y_{\alpha,\eps},p^2_{\alpha,\eps},\theta_{\alpha,\eps}) < \sup_\alpha \cH(y_{\alpha,\eps},p^2_{\alpha,\eps}) < \infty
\end{equation}
implying \ref{item:fundamental_limsup}.

	
	\smallskip
	
\end{proof}
\begin{proof}[Proof of Proposition \ref{prop:CP}]

    \textit{Proof of (a).} Fix $\lambda >0$ and $h_1,h_2 \in C_b(E)$. Let $u$ be a viscosity subsolution and $v$ be a viscosity supersolution of $f - \lambda H_\dagger f = h_1$ and  $f - \lambda H_\ddagger f = h_2$ respectively. 
	For any $\varepsilon > 0$ and any $\alpha > 0$, define the map $\Phi_{\alpha,\eps}: E \times E \to \mathbb{R}$ by
	\begin{equation*}
		\Phi_{\alpha,\eps}(x,y) := \frac{u(x)}{1-\varepsilon} - \frac{v(y)}{1+\varepsilon} - \frac{\alpha}{2} d^2(x,y) - \frac{\varepsilon}{1-\varepsilon} \Upsilon(x) - \frac{\varepsilon}{1+\varepsilon}\Upsilon(y).
	\end{equation*}
	Let $\varepsilon > 0$. By Lemma \ref{lemma:doubling_lemma}, there is a compact set $K_\varepsilon \subseteq E$ and there exist points $x_{\alpha,\eps},y_{\alpha,\eps} \in K_\varepsilon$ such that
	\begin{equation} \label{eqn:comparison_optimizers}
		\Phi_{\alpha,\eps}(x_{\alpha,\eps},y_{\alpha,\eps}) = \sup_{x,y \in E} \Phi_{\alpha,\eps}(x,y),
	\end{equation}
	and 
	\begin{equation}\label{eq:proof-CP:Psi-xy-converge}
		\lim_{\alpha \to \infty} \frac{\alpha }{2}d^2(x_{\alpha,\eps},y_{\alpha,\eps}) = 0.
	\end{equation}
	For all $\alpha$ it follows that
	\begin{align}\label{eq:proof-CP:general-bound-u1u2}
		\sup_E (u - v) & = \lim_{\eps\to 0} \sup_{x\in E} \frac{u(x)}{1-\eps} - \frac{v(x)}{1+\eps}\\
  &\leq \liminf_{\eps \to 0} \sup_{x,y \in E} \frac{u(x)}{1-\eps} - \frac{v(y)}{1+\eps} - \frac{\alpha}{2} d^2(x,y) - \frac{\eps}{1-\eps} \Upsilon (x) - \frac{\eps}{1+\eps} \Upsilon (y) \\
  & = \liminf_{\eps \to 0} \frac{u(x_{\alpha,\varepsilon})}{1-\varepsilon} - \frac{v(y_{\alpha,\varepsilon})}{1+\varepsilon} - \frac{\alpha}{2} d^2(x_{\alpha,\varepsilon},y_{\alpha,\varepsilon}) - \frac{\varepsilon}{1-\varepsilon}\Upsilon(x_{\alpha,\varepsilon}) -\frac{\varepsilon}{1+\varepsilon}\Upsilon(y_{\alpha,\varepsilon})\\
  &\leq \liminf_{\varepsilon \to 0} \left[ \frac{u(x_{\alpha,\eps})}{1-\varepsilon} - \frac{v(y_{\alpha,\eps})}{1+\varepsilon}\right].
	\end{align}
	At this point, we want to use the sub- and supersolution properties of $u$ and $v$. Define the test functions $\varphi^{\varepsilon,\alpha}_1 \in \cD(H_\dagger), \varphi^{\varepsilon,\alpha}_2 \in \cD(H_\ddagger)$ by
	\begin{align*}
		\varphi^{\varepsilon,\alpha}_1(x) & := (1-\varepsilon) \left[\frac{v(y_{\alpha,\eps})}{1+\varepsilon} + \frac{\alpha}{2} d^2(x,y_{\alpha,\eps}) + \frac{\varepsilon}{1-\varepsilon}\Upsilon(x) + \frac{\varepsilon}{1+\varepsilon}\Upsilon(y_{\alpha,\eps})\right] \\
		& \hspace{5cm} + (1-\varepsilon)(x-x_{\alpha,\eps})^2, \\
		\varphi^{\varepsilon,\alpha}_2(y) & := (1+\varepsilon)\left[\frac{u_1(x_{\alpha,\eps})}{1-\varepsilon} - \frac{\alpha}{2} d^2(x_{\alpha,\eps},y) - \frac{\varepsilon}{1-\varepsilon}\Upsilon(x_{\alpha,\eps}) - \frac{\varepsilon}{1+\varepsilon}\Upsilon(y)\right] \\
		& \hspace{5cm}  - (1+\varepsilon) (y-y_{\alpha,\eps})^2.
	\end{align*}
	Using \eqref{eqn:comparison_optimizers}, we find that $u - \varphi^{\varepsilon,\alpha}_1$ attains its supremum at $x = x_{\alpha,\eps}$, and thus
	\begin{equation*}
		\sup_E (u-\varphi^{\varepsilon,\alpha}_1) = (u-\varphi^{\varepsilon,\alpha}_1)(x_{\alpha,\eps}).
	\end{equation*}
	Denote $p_{\alpha,\eps}^1 := \alpha \dd_x \frac{1}{2}d^2(x_{\alpha,\eps},y_{\alpha,\eps})$. By our addition of the penalization $(x-x_{\alpha,\eps})^2$ to the test function, the point $x_{\alpha,\eps}$ is in fact the unique optimizer, and we obtain from the subsolution inequality that
	\begin{equation}\label{eq:proof-CP:subsol-ineq}
		u(x_{\alpha,\eps}) - \lambda \left[ (1-\varepsilon) \mathcal{H}\left(x_{\alpha,\eps}, p_{\alpha,\eps}^1 \right) + \varepsilon C_\Upsilon\right] \leq h_1(x_{\alpha,\eps}).
	\end{equation}	
	With a similar argument for $u_2$ and $\varphi^{\varepsilon,\alpha}_2$, we obtain by the supersolution inequality that
	\begin{equation}\label{eq:proof-CP:supersol-ineq}
		v(y_{\alpha,\eps}) - \lambda \left[(1+\varepsilon)\mathcal{H}\left(y_{\alpha,\eps}, p_{\alpha,\eps}^2 \right) - \varepsilon C_\Upsilon\right] \geq h_2(y_{\alpha,\eps}),
	\end{equation}
	where $p_{\alpha,\eps}^2 := -\alpha \dd_y \frac{1}{2} d^2 (x_{\alpha,\eps},y_{\alpha,\eps})$. With that, estimating further in~\eqref{eq:proof-CP:general-bound-u1u2} leads to
	
 \begin{multline*}
		\sup_E(u-v) \leq \liminf_{\varepsilon\to 0}\liminf_{\alpha \to \infty} \bigg[\frac{h_1(x_{\alpha,\eps})}{1-\varepsilon} - \frac{h_2(y_{\alpha,\eps})}{1+\varepsilon} + \frac{\varepsilon}{1-\varepsilon} C_\Upsilon \\ + \frac{\varepsilon}{1+\varepsilon} C_\Upsilon  + \lambda \left[\mathcal{H}(x_{\alpha,\eps},p^1_{\alpha,\eps}) - \mathcal{H}(y_{\alpha,\eps},p^2_{\alpha,\eps})\right]\bigg].
	\end{multline*}
 Note that, by the subsolution inequality \eqref{eq:proof-CP:subsol-ineq},
	\begin{align}
		- \infty < \frac{1}{\lambda} \inf_E\left(u - h_1\right) & \leq (1-\varepsilon) \mathcal{H}(x_{\alpha,\eps},p^1_{\alpha,\eps}) + \varepsilon C_{\Upsilon},
	\end{align}
 and by the supersolution inequality 
 \eqref{eq:proof-CP:supersol-ineq},

\begin{equation} 
(1+\eps)\mathcal{H}\left(y_{\alpha,\eps},p^2_{\alpha,\eps}\right) - \varepsilon C_\Upsilon \leq \frac{1}{\lambda} \sup_E (v-h_2) < \infty.
\end{equation}
Thus, comparison principle follows from Proposition \ref{prop:continuity_estimate}.

 \smallskip
 
 \textit{Proof of (b).} Let $u$ be a subsolution for $H_\dagger$, and $v$ a supersolution for $H_\ddagger$. Let $T > 0$ be fixed. 


For any $\beta > 0$, we have
\begin{equation*}
    \sup_{t \in [0,T],x} u(t,x) - v(t,x) \leq \sup_{t \in [0,T],x} u(t,x) - v(t,x) -\beta t + \beta T
\end{equation*}
We next incorporate our Lyapunov type functions 
\begin{equation*}
    \sup_{t \in [0,T],x} u(t,x) - v(t,x) -\beta t + \beta T = \lim_{\varepsilon \downarrow 0} \sup_{t \in [0,T],x} \frac{u(t,x)}{1-\eps} - \frac{v(t,x)}{1+\eps} - \frac{2\eps}{1-\eps^2}\Upsilon(x) -\beta t + \beta T.
\end{equation*}
Thus, for any $\varepsilon > 0$, $\alpha,\gamma > 0$, we have
\begin{multline}\label{eq:quadruplication}
    \sup_{t \in [0,T], x} \frac{u(t,x)}{1-\varepsilon} - \frac{v(t,x)}{1+\varepsilon} - \frac{2\epsilon}{1-\epsilon^2}\Upsilon(x) -\beta t + \beta T \leq \sup_{s,t \in [0,T],x,y} \frac{u(t,x)}{1-\varepsilon} - \frac{v(s,y)}{1+\varepsilon} - \frac{\alpha}{2}d^2(x,y) - \frac{\gamma}{2}(s-t)^2 \\
    - \frac{\varepsilon}{1-\varepsilon} \Upsilon(x) - \frac{\varepsilon}{1+\varepsilon}\Upsilon(y) - \frac{\beta}{2} (t+s) + \beta T.
\end{multline}
By Lemma \ref{lemma:quadrupling_lemma}, there are $(x_{\alpha,\gamma},t_{\alpha,\gamma},y_{\alpha,\gamma},s_{\alpha,\gamma}) = (x_{\varepsilon,\beta,\alpha,\gamma},t_{\varepsilon,\beta,\alpha,\gamma},y_{\varepsilon,\beta,\alpha,\gamma},s_{\varepsilon,\beta,\alpha,\gamma})$ optimizing the supremum on the right-hand side and such that
\begin{align*}
    \frac{\alpha}{2}d^2(x_{\alpha,\gamma},y_{\alpha,\gamma}) \to 0 \quad \text{as $\alpha\to\infty$}.
\end{align*}

First assume $t_{\alpha,\gamma},s_{\alpha,\gamma} > 0$. We will aim for a contradiction.

Define the functions $f^\dagger_{\eps,\beta,\alpha}(x)\in D(H_\dagger)$ and $f^\ddagger_{\eps,\beta,\alpha}(y)\in D(H_\ddagger)$ by
\begin{align*}
    f^\dagger(x) & = (1-\eps)\left[\frac{v(s_{\alpha,\gamma},y_{\alpha,\gamma})}{1+\eps} +\frac{\alpha}{2}d^2(x,y_{\alpha,\gamma}) +\frac{\eps}{1+\eps}\Upsilon(y_{\alpha,\gamma})- \beta T  \right] +\eps \Upsilon(x),\\
f^\ddagger(y) & = (1+\eps)\left[ \frac{u(t_{\alpha,\gamma},x_{\alpha,\gamma})}{1-\eps} -\frac{\alpha}{2}d^2(x_{\alpha,\gamma},y) - \frac{\eps}{1-\eps}\Upsilon(x_{\alpha,\gamma}) +\beta T\right] - \eps \Upsilon(y).
\end{align*}
Observe that
\begin{align*}
  u(t_{\alpha,\gamma},x_{\alpha,\gamma})-f^\dagger(x_{\alpha,\gamma}) - h_1(t_{\alpha,\gamma}) & = \sup_{t\in[0,T],x} u(t,x)- f^\dagger(x)- h_1(t),\\
  v(s_{\alpha,\gamma},y_{\alpha,\gamma})-f^\ddagger(y_{\alpha,\gamma})- h_2(s_{\alpha,\gamma}) & = \inf_{t\in [0,T],y} v(s,y) - f^\ddagger(y) - h_2(s),
\end{align*}
where $h_1(t)= (1-\eps)(\frac{\beta}{2}(t+s_{\alpha,\gamma}) + \frac{\gamma}{2}(t-s_{\alpha,\gamma})^2)$ and $h_2(s)= (1+\eps)(-\frac{\beta}{2}(t_{\alpha,\gamma}+s) - \frac{\gamma}{2}(t_{\alpha,\gamma}- s)^2)$.

Then, using the sub and supersolution inequalities, dividing them by $(1-\varepsilon)$ and $(1+\varepsilon)$ respectively, we find 
\begin{align}
    & \gamma (t_{\alpha,\gamma} - s_{\alpha,\gamma}) + \frac{\beta}{2} - \cH(x_{\alpha,\gamma}, \alpha \dd_x \frac{1}{2} d^2(\cdot,y_{\alpha,\gamma})(x_{\alpha,\gamma})) - \frac{\varepsilon}{1-\varepsilon}c_\Upsilon \leq 0, \\
    & \gamma (t_{\alpha,\gamma} - s_{\alpha,\gamma}) - \frac{\beta}{2} - \cH(y_{\alpha,\gamma}, - \alpha \dd_y \frac{1}{2} d^2(x_{\alpha,\gamma},\cdot)(y_{\alpha,\gamma})) + \frac{\varepsilon}{1+\varepsilon}c_\Upsilon \geq 0.
\end{align}
Combining the two equations yields
\begin{equation*}
    \beta \leq \cH(x_{\alpha,\gamma}, \alpha \dd_x \frac{1}{2} d^2(\cdot,y_{\alpha,\gamma})(x_{\alpha,\gamma})) - \cH(y_{\alpha,\gamma}, - \alpha \dd_y \frac{1}{2} d^2(x_{\alpha,\gamma},\cdot)(y_{\alpha,\gamma})) + \frac{2\varepsilon}{1-\varepsilon^2} c_\Upsilon 
\end{equation*} 
sending $\alpha \rightarrow \infty$ and $\varepsilon \rightarrow 0$, and using Proposition \ref{prop:continuity_estimate}, we get a contradiction for small $\eps$ as $\beta > 0$.
So it holds that for small $\varepsilon$, large $\alpha$ and all $\gamma>0$, we have $t_{\alpha,\gamma} = 0$ or $s_{\alpha,\gamma} = 0$.

Proceeding from equation \eqref{eq:quadruplication}, we get
\begin{align*}
    \sup_{t \in [0,T],x} u(t,x) - v(t,x) & \leq \sup_{t \in [0,T],x} u(t,x) - v(t,x) -\beta t + \beta T \\
    & \leq  \lim_{\varepsilon \downarrow 0} \sup_{s,t \in [0,T],x,y} \frac{u(t,x)}{1-\varepsilon} - \frac{v(s,y)}{1+\varepsilon} - \frac{\alpha}{2}d^2(x,y) - \frac{\gamma}{2}(s-t)^2 \\
    & \qquad - \frac{\varepsilon}{1-\varepsilon} \Upsilon(x) - \frac{\varepsilon}{1+\varepsilon}\Upsilon(y) - \frac{\beta}{2} (t+s) + \beta T \\
    & \leq  \lim_{\varepsilon \downarrow 0} \lim_{\gamma \rightarrow \infty} \lim_{\alpha \rightarrow \infty} \frac{u(t_{\alpha,\gamma},x_{\alpha,\gamma})}{1-\varepsilon} - \frac{v(s_{\alpha,\gamma},y_{\alpha,\gamma})}{1+\varepsilon}\\
    &\qquad +\frac{\varepsilon}{1-\varepsilon} \Upsilon(x_{\alpha,\gamma}) - \frac{\varepsilon}{1+\varepsilon}\Upsilon(y_{\alpha,\gamma}) - \frac{\beta}{2} (t_{\alpha,\gamma}+s_{\alpha,\gamma}) + \beta T \\
    & \leq \sup_x u(0,x) - v(0,x) + \beta T
\end{align*}
where we used that $u$ is upper semi-continuous, $v$ is lower semi-continuous, and  Lemma \ref{lemma:quadrupling_lemma} (b)(ii) and the fact that $t_{\alpha,\gamma}=0$ or $s_{\alpha,\gamma} = 0$.
As $\beta > 0$ was arbitrary, we conclude.

\end{proof}

\subsection{Existence of viscosity solutions}\label{sec:existence}
In this subsection, we will prove Theorem \ref{theorem:existence_of_viscosity_solution}. In other words, we show that for~$h\in C_b(E)$ and~$\lambda>0$, the function~$R(\lambda)h$ given by
	\begin{equation*}
		R(\lambda) h(x) = \sup_{\substack{\gamma \in \mathcal{A}\mathcal{C} \\ \gamma(0) = x}} \int_0^\infty \lambda^{-1} e^{-\lambda^{-1}t} \left[h(\gamma(t)) - \int_0^t \mathcal{L}(\gamma(s),\dot{\gamma}(s))\right] \, \dd t
	\end{equation*}
	is indeed a viscosity solution to $f - \lambda \bfH f = h$. 
 \begin{proof}[Proof of Theorem \ref{theorem:existence_of_viscosity_solution}]
     The result follows from Theorem 2.8 in \cite{KrSchl20} where the authors show that $\partial_p \cH(x,p) \cap T_E(x) \neq \emptyset$ implies the existence of solutions.
 \end{proof}

\smallskip

We want to mention that in our ongoing work \cite{DeCoKr23} we prove the existence of solutions for the evolutionary equation as well.

\section{Example: Hamiltonians arising from biochemistry}
\label{section:verification-for-examples-of-Hamiltonians}
The purpose of this section is to showcase that the method introduced in this paper is versatile enough to capture interesting examples that could not be treated before.
The Hamiltonian that we consider arises from the study of systems biology. More precisely, it plays a crucial role in the study, with a \textit{large deviations} approach, of multi--scale Markov processes modelling chemical reactions.
Over the past few decades, there has been significant research conducted on multi-scale chemical reactions networks (see for example \cite{KaKu13} and \cite{BaKuPoRe06}).
They are usually described by the use of continuous-time Markov chains with generators of the following form 
\begin{equation}\label{eq:multiple-time-Markov-process-generator}
     A f(z) =  \sum_{\gamma \in \Gamma} r(z,\gamma)\left[f(z+\gamma) - f(z) \right],
\end{equation}
with $z$ in the space $\mathbb{N}^J$, with $J$ the set of chemical species, and $r\in C^1 (\mathbb{N}^J\times \Gamma)$ is a non-negative smooth function.
In the above, the state $z$ is a vector whose components  describe the number of molecules of a chemical species,  $r$ is the transition rate of the reaction, $\Gamma$ is the set representing all reactions. In particular, every $\gamma \in \Gamma$ describes a reaction in the sense that its $i$--th component represents the number of molecules of the $i$--th species that is used (if the component is negative) or obtained (if the component is positive) in the reaction.

Motivated by a huge class of examples arising from biochemistry in which two dominant time--scales occur (see for instance Example \ref{ex:mm}), we will consider a two - scale process $Z=(X,Y)$ in the space $E^0 = \mathbb{N}^l \times \mathbb{N}^m$.  
The amount of molecules of the first type is an order of magnitude greater then the amount of the second type.  For this reason and to be able to study the limit behaviour of the process, we consider the scaled species $X_N=X/N$ and $Y_N=Y$ in the space $E^0_N = \left(\frac{1}{N} \mathbb{N}\right)^l \times \mathbb{N}^m$. The time-scale separation between the slow process $X_N$ and the fast process $Y_N$ is then $N$ and the generator of the rescaled process $Z_N=(X_N,Y_N)$ is given by
\begin{equation}\label{eq:generator_rescaled}
    A_N f(x,y) = N \sum_{\gamma=(\gamma_x,\gamma_y) \in \Gamma} r(x,y,\gamma) [f(x+N^{-1}\gamma_x , y + \gamma_y)],
\end{equation}
for $f \in D(A_N) \subseteq C (E_N^0)$.

In order to gain a more comprehensive understanding of the system and simplify the limit procedure that will follow, we divide the generator into three distinct parts. These parts will specifically describe reactions occurring on the macroscopic, microscopic, and a combination of the two scales, respectively.
The generator of $(X_N, Y_N)$ is then, 
\begin{align}\label{eq:multiple-time: generator_extended}
    A_N f(x,y) = & N \sum_{\gamma=(\gamma_x,\gamma_y)\in \Gamma_1} r (x,y,\gamma)\left[f(x+N^{-1}\gamma_x,y) - f(x,y) \right] + \\
    & N \sum_{\gamma=(\gamma_x,\gamma_y)\in \Gamma_2} r (x,y,\gamma)\left[f(x+N^{-1}\gamma_x,y + \gamma_y) - f(x,y) \right] + \\
    & N \sum_{\gamma=(\gamma_x,\gamma_y)\in \Gamma_3} r (x,y,\gamma)\left[f(x,y + \gamma_y) - f(x,y) \right],
\end{align}
where we write
\begin{align}
    &\Gamma_1 = \left\{ \gamma=(\gamma_x,\gamma_y)\in \bZ^l \times \bZ^m \, : \, \gamma_{y_i}=0 \,  \forall i \in \{1,\dots,m\} \right\} ;\\
    &\Gamma_2 = \left\{ \gamma =(\gamma_x,\gamma_y)\in \bZ^l \times \bZ^m \, : \, \exists i\in\{1,\dots,l\} \,,  j \in \{1,\dots, m\} \, | \, \gamma_{x_i}\neq 0, \gamma_{y_j}\neq 0 \right\} ;\\
    &\Gamma_3 = \left\{ \gamma =(\gamma_x,\gamma_y)\in \bZ^l \times \bZ^m \, : \, \gamma_{x_i}=0 \,  \forall i \in \{1,\dots,l\} \right\}.
\end{align}
The model is subjected to the following assumption.
\begin{assumption}\label{assumption:conservation}
	The molecules that are part of the fast process are subjected to a conservation law. More precisely, there exists a constant $M>0$ such that
	\begin{equation}
		\sum_{i=1}^m Y_{i} = M, \qquad \text{and} \qquad \sum_{i=1}^m \gamma_{y,i} = 0 \qquad \forall \gamma \in \Gamma_2 \cup \Gamma_3.
	\end{equation}
\end{assumption}

Assumption \ref{assumption:conservation} allows us to restrict the set of values of $Y_N$ to the set 
 $$F_M = \{ n \in \mathbb{N}^m \, : \, \sum_{i= 1}^m n_i = M \},$$
 and, hence, we consider for our analysis of $Z_N$ the set
 $$E_N = \left(\frac{1}{N} \mathbb{N}\right)^l \times F_M.$$

To show an example, we describe in the following the model for enzyme kinetics with an inflow of the substrate, also called Michaelis--Menten model, studied in \cite{Po18}.
\begin{example}\label{ex:mm}
Consider four types of molecules. Namely, $S$, $E$, $ES$ and $P$ representing respectively the substrate, enzyme, enzyme--substrate complex and the product. The following four reactions occur.
\begin{enumerate}[(1)]
    \item $\emptyset \xhookrightarrow[]{k_0} S$
    \item $E + S \xhookrightarrow[]{k_1} ES$  \item $ES \xhookrightarrow[]{k_2} E + S$
    \item $ES \xhookrightarrow[]{k_3} P +E$.
\end{enumerate}
Let $X^1, X^2, Y^1, Y^2$ represent the amount of $S, P, E, ES$ respectively. 

In real-world physical scenarios, the quantities of enzyme molecules and enzyme-substrate complexes are typically small when compared to the number of substrate and product molecules. Consequently, it is reasonable to assume that $X^1$ and $X^2$ are an order of magnitude greater then $Y^1$ and $Y^2$. In this way, we lead to the scaled amounts represented by a slow process $X_N = (X^1/ N, X^2/N)$ and a fast process $Y_N=(Y^1, Y^2)$. 

The generator of the two--scales process $Z_N = (X_N,Y_N)$ is as in \eqref{eq:multiple-time: generator_extended}, with the following rates, each describing one of the reactions above.
\begin{enumerate}[(1)]
    \item $r(x,y,(1,0, 0, 0))= k_0 \in \R_+;$
    \item $r(x,y,(-1, 0 , -1, 1)) = k_1 x_1 y_1  \qquad k_1, \in \R_+;$
    \item $r(x,y,(1,0,1,-1)) = k_2 y_2 \qquad k_2 \in \R_+;$
    \item $r(x,y,(0,1,1,-1)) = k_3 y_2 \qquad k_3\in \R_+$.
\end{enumerate}
\end{example}
The above model falls into the class of examples described by \eqref{eq:multiple-time: generator_extended}. Moreover, we will observe in Subsection \ref{subsection:example_Hamiltonian} that our approach proves to be suitable and applicable to a bigger class of scenarios.  

We are interested in the limit behaviour of the two component $E_N$-valued Markov process $(X_N, Y_N)$. In the limit regime, the fast component $Y_N$ converges to equilibrium and the slow component $X_N$ converges to a deterministic limit. To characterize the speed of convergence, we are interested in the large deviation behavior for the slow process. In particular, we want to characterize the function $I:C_E [0,\infty) \to [0,\infty]$ (that will depend on the generator of the fast process), with $C_E [0,\infty)$ the space of all continuous path $x: [0,\infty)\to E$, with $E= [0,\infty)^l$, such that 
\begin{equation}
    \mathbb{P}\left[X_N \approx x \right] \sim e^{-N I(x)}.
\end{equation}
This means that $X_N$ has a limit path $\tilde{x} \in C_{E}$ and this limit is the unique minimizer of the function $I$. Moreover, for any path $x\neq \tilde{x}$ such that $I(x)>0$, the probability that $X_N$ is close to $x$ is exponentially small.
Hence, characterizing the function $I$ means to characterize the limit behaviour of $X_N$. 

In Chapter 8 of \cite{FK06}, it is shown that the function $I$ is characterized by the unique solution of the Hamilton-Jacobi equation
\begin{equation}
    f-\lambda \bfH f = h \qquad (\text{or $\partial_t f - \bfH f = 0$}),
\end{equation}
with $\bfH f = \cH(f,\nabla f)$ and 
$\cH : E \times \R^l \to \R $ of the type
\begin{equation}\label{hamiltonian_example}
    \cH(x,p)=\sup_{\theta \in \Theta} \left[ \int_{F_M} V(y;x,p) \, d\theta(y) + \inf_{\varphi \in C^2(F_M)} \int_{F_M} e^{-\varphi}L_{x,p}e^{\varphi}\,d\theta \right].
\end{equation}
We refer to the Subsection \ref{subsection:example_Hamiltonian} for a detailed explanation on how to construct the Hamiltonian using a limiting procedure built on \eqref{eq:multiple-time: generator_extended}. In this context, $\Theta = \mathcal{P}(F_M)$, that is the probability measures over the set $F_M$, and 
\begin{align}
    V(y;x,p) &= \sum_{\gamma=(\gamma_x,\gamma_y)\in\Gamma_1} r(x,y,\gamma) (e^{\langle p,\gamma_x\rangle}-1) + \sum_{\gamma=(\gamma_x,\gamma_y)\in\Gamma_2} r(x,y,\gamma) (e^{\langle p, \gamma_x\rangle}-1),\\
    L_{x,p}f(x,y) &= \sum_{\gamma=(\gamma_x,\gamma_y)\in\Gamma_2} r(x,y,\gamma) e^{\langle p, \gamma_x\rangle} [ f(x,y+\gamma_y)- f(x,y)]\\ 
        &+ \sum_{\gamma=(\gamma_x,\gamma_y)\in\Gamma_3} r(x,y,\gamma) [f(x,y+\gamma_y) - f(x,y)],
\end{align}
where $r \in C^1(E \times \Gamma_i)$ are non negative smooth functions. 
Hence, in this context the well-posedness of the Hamilton-Jacobi equation plays a central role. 

In Subsection \ref{subsection:example_comparison}, we prove comparison principle for the equations $f - \lambda \bfH f = h$ and $\partial_t f - \bfH f = 0$, with $\bfH f = \cH (x,\nabla f(x))$ and $\cH$ as in \eqref{hamiltonian_example}, and existence of solutions for the stationary equation. In Subsection \ref{subsection:example_Hamiltonian}, we outline how the above Hamiltonian can be obtained from a limit procedure and an eigenvalue problem arising from the multi--scale Markov process described above. For these aims, we need two additional assumptions.
\begin{assumption}\label{assumption:irreducibility}
	The matrix $(R_x)_{y_1,y_2}= \sum_{k\in\{2,3\}} \sum_{\gamma \in \Gamma_k : y_2 = y_1 + \gamma_y} r(x,y_1,\gamma)$ is irreducible for every $x$.
\end{assumption}	

\begin{assumption}\label{assumption:inner_product} For all $z$ and $\gamma$ there exist  continuous functions $ \phi^{z,\gamma}_1,\phi^{z,\gamma}_2: [0,\infty)^l \to \R$ such that  $r(x, z, \gamma) = \phi^{z,\gamma}_1(x)\phi^{z,\gamma}_2(x)$ and such that
\begin{enumerate}
    \item  $\inf \phi_2^{z,\gamma}>0$;
    \item if $\langle \gamma_x , x-y \rangle >0 $ then $\phi_1^{z,\gamma} (x) < \phi_1 ^{z,\gamma}(y)$;
    \item if $\gamma_{x_i} < 0 $ and $x_i = 0$ then $\phi_1(x) = 0$.
\end{enumerate}
    
\end{assumption}
Assumption \ref{assumption:irreducibility} guarantees the existence of an eigenvalue of $V_{x,p} + L_{x,p}$ (i.e. step \ref{item: FengKurtz_eigenvalue} of Subsection \ref{subsection:example_Hamiltonian}). 

Assumption \ref{assumption:inner_product} restricts the possible rates and hence Hamiltonians of the type \eqref{hamiltonian_example} that we are able to treat with our theorems. First of all it removes cases of type $\cH(x,p)=x (e^p -1)$, with $x\in [0,\infty)$, for which it is well known that comparison principle fails (see e.g. Example E in \cite{ShWe05}). Moreover, it is straightforward to show Assumption \ref{assumption:inner_product} in many cases in one dimension, e.g. $\cH(x,p)=x(e^{-p}-1)$, $\cH(x,p)=x e^{\beta x} (e^{-p} - 1)$ in $[0,\infty)$.  It is also straightforward to verify the above assumptions for the two dimensional Example \ref{ex:mm}. 
Regarding higher dimensional cases, the assumption excludes examples in which there is an interaction between two molecules of the slow process. These cases, indeed, produce rates of the type $r_{z,\gamma}(x) = x_i x_j$ with $i,j \in \{1,\dots, l\}$ or such that $\langle \gamma_x, x- y \rangle > 0$ and not equal to $\phi_1(y) - \phi_1(x)$, for which Assumption \ref{assumption:inner_product} does not necessary hold. However, to our knowledge, examples of this type are largely unexplored. 

Cases in higher dimension for which the above assumption holds are e.g. Hamiltonians of the type $\cH(x,p) = x_1 (1+ x_2) (e^{p_1} -1)$.

\subsection{Comparison principle and existence of viscosity solutions}\label{subsection:example_comparison}
\begin{theorem}[Comparison principle]\label{theorem:comparison_example}
    Consider $\cH (x,p)$ as in \eqref{hamiltonian_example}. Suppose Assumption \ref{assumption:irreducibility} and Assumption \ref{assumption:inner_product}. Then comparison principles for $f-\lambda \cH(x, \nabla f(x)) = h$ and $\partial_t f - \cH(x,\nabla f(x)) = 0$ hold.
\end{theorem}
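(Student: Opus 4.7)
The plan is to verify Assumption \ref{assumption:regularity:Lambda-I} for the decomposition
\begin{equation*}
\Lambda(x,p,\theta) = \int_{F_M} V(y;x,p)\,d\theta(y), \qquad
\cI(x,p,\theta) = -\inf_{\varphi \in C^2(F_M)} \int_{F_M} e^{-\varphi} L_{x,p} e^{\varphi}\,d\theta,
\end{equation*}
so that Theorem \ref{theorem:comparison_principle_variational} applies. A major structural simplification here is that by Assumption \ref{assumption:conservation} the set $F_M$ is \emph{finite}, so $\Theta = \cP(F_M)$ is itself a compact simplex. This makes Assumption \ref{assumption:regularity:Lambda-I}\ref{item:assumption:compact-sublevelsets} immediate, and, combined with the fact that $L_{x,p}$ annihilates constants (so $\cI\geq 0$), also supplies \ref{item:assumption:regularity:boundness}, since $\theta\mapsto\Lambda(x,p,\theta)$ is continuous on the compact simplex.

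For \ref{item:assumption:convexity} I identify $\cH(x,p)$ with the principal eigenvalue of the finite matrix $V_{x,p}+L_{x,p}$ via the Donsker--Varadhan variational formula; this eigenvalue is convex in $p$ and, since $V_{x,0}=0$ and $L_{x,0}$ is a genuine irreducible Markov generator by Assumption \ref{assumption:irreducibility}, it vanishes at $p=0$. For \ref{item:assumption:compact_containment} I take a mass-action free-energy type containment function such as a smoothed $\Upsilon(x)=\sum_i x_i\log(1+x_i)$, whose gradient scales like $\log x$ so that the tilted rates $r(x,y,\gamma)e^{\langle\nabla\Upsilon(x),\gamma_x\rangle}$ remain bounded despite polynomial growth of $r$, with condition (3) of Assumption \ref{assumption:inner_product} ensuring that the level sets of $\Upsilon$ remain inside $E=[0,\infty)^l$. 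Assumption \ref{item:assumption:statinary_meas} is obtained from Perron--Frobenius: the left Perron eigenvector of $V_{x,p}+L_{x,p}$ is a continuous selection $\theta^0(x,p)$ along which $(\cI-\Lambda)(x,p,\theta^0)=-\cH(x,p)$, so one may take $g(x,p):=-\cH(x,p)$.

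The $\Gamma$-convergence in \ref{item:assumption:Gamma-convergence} splits into the liminf inequality (which is immediate since $\Lambda$ is jointly continuous and $\cI$ is a supremum of jointly continuous functions) and the construction of a recovery sequence. For the latter, the Donsker--Varadhan supremum on the finite space $F_M$ is attained at the logarithm of the right Perron eigenvector of the tilted generator $L_{x,p}$, which depends continuously on $(x,p)$ by irreducibility; consequently $\theta_n=\theta$ works whenever $\theta$ lies in the relative interior of $\cP(F_M)$, while for boundary $\theta$ a diagonal approximation from the interior suffices.

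The main obstacle, and the step I would carry out last, is the continuity estimate \ref{item:assumption:regularity:continuity_estimate}; this is precisely where Assumption \ref{assumption:inner_product} is indispensable. Along the fundamental variables we have $p^1_{\alpha,\eps}=p^2_{\alpha,\eps}=\alpha(x_{\alpha,\eps}-y_{\alpha,\eps})$, so $\mathrm{sgn}\langle p_{\alpha,\eps},\gamma_x\rangle=\mathrm{sgn}\langle\gamma_x,x_{\alpha,\eps}-y_{\alpha,\eps}\rangle$ for every reaction $\gamma$. The factorization $r(x,z,\gamma)=\phi_1^{z,\gamma}(x)\phi_2^{z,\gamma}(x)$ with $\phi_1^{z,\gamma}$ strictly decreasing in the $\gamma_x$-direction then makes the leading contribution $(\phi_1(x_{\alpha,\eps})-\phi_1(y_{\alpha,\eps}))\cdot(e^{\langle p_{\alpha,\eps},\gamma_x\rangle}-1)$ non-positive, while the cross term $\phi_1(y_{\alpha,\eps})(\phi_2(x_{\alpha,\eps})-\phi_2(y_{\alpha,\eps}))(e^{\langle p_{\alpha,\eps},\gamma_x\rangle}-1)$ is controlled by the continuity of $\phi_2$ and by the sub-/supersolution bounds \eqref{subsolution_bound}--\eqref{supersolution_bound} on $\cH$ that prevent $|p_{\alpha,\eps}|$ from blowing up within the fundamental collection. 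An analogous cancellation, applied to the exponentially tilted rates inside $L_{x,p}$ after inserting a near-maximizer $\varphi_{\alpha,\eps}$ in the Donsker--Varadhan supremum, yields the matching bound on the $\cI$-difference. With all seven parts of Assumption \ref{assumption:regularity:Lambda-I} established, the comparison principles for both $f-\lambda\cH(x,\nabla f)=h$ and $\partial_t f-\cH(x,\nabla f)=0$ follow directly from Theorem \ref{theorem:comparison_principle_variational}.
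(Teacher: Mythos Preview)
Your overall strategy—verify Assumption \ref{assumption:regularity:Lambda-I} for the decomposition $\Lambda(x,p,\theta)=\int V\,d\theta$, $\cI(x,p,\theta)=-\inf_\varphi\int e^{-\varphi}L_{x,p}e^\varphi\,d\theta$ and invoke Theorem \ref{theorem:comparison_principle_variational}—is exactly the paper's approach, and most of your verifications (convexity, boundedness, compactness of $\Theta$, existence of $\theta^0$) agree with it. Two points, however, deserve correction.

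\textbf{The continuity estimate.} Your argument for \ref{item:assumption:regularity:continuity_estimate} has the right leading idea: for each reaction $\gamma$, the sign of $\langle p_{\alpha,\eps},\gamma_x\rangle$ matches that of $\langle\gamma_x,x_{\alpha,\eps}-y_{\alpha,\eps}\rangle$, so Assumption \ref{assumption:inner_product}(2) makes the main term $(\phi_1(x_{\alpha,\eps})-\phi_1(y_{\alpha,\eps}))(e^{\langle p_{\alpha,\eps},\gamma_x\rangle}-1)$ non-positive. But your handling of the cross term is flawed: you assert that the sub-/supersolution bounds ``prevent $|p_{\alpha,\eps}|$ from blowing up''. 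This is not true in general. When $x_{\alpha,\eps},y_{\alpha,\eps}$ approach the boundary $\{x_i=0\}$, a rate $r(y_{\alpha,\eps},z,\gamma)$ can vanish while the corresponding exponential $e^{\langle p_{\alpha,\eps},\gamma_x\rangle}$ diverges, and $\cH(y_{\alpha,\eps},p_{\alpha,\eps})$ still stays bounded. The paper deals with this by a case split: if all rates at $y_{\alpha,\eps}$ stay bounded away from zero, then each $e^{\langle p_{\alpha,\eps},\gamma_x\rangle}$ is bounded and continuity finishes; if some rate tends to zero, one rewrites the cross term as
\[
\frac{\phi_2(x_{\alpha,\eps})-\phi_2(y_{\alpha,\eps})}{\phi_2(y_{\alpha,\eps})}\cdot\bigl[\phi_1(y_{\alpha,\eps})\phi_2(y_{\alpha,\eps})(e^{\langle p_{\alpha,\eps},\gamma_x\rangle}-1)\bigr],
\]
where the second bracket equals $r(y_{\alpha,\eps},z,\gamma)(e^{\langle p_{\alpha,\eps},\gamma_x\rangle}-1)$ and is bounded \emph{as a product} by the supersolution bound \eqref{supersolution_bound}, while the first factor tends to zero by continuity of $\phi_2$ and $\inf\phi_2>0$. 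The same rewriting handles the tilted rates in the $\cI$-part. So the mechanism is boundedness of $r(y)\,e^{\langle p,\gamma_x\rangle}$, not of $p$ itself.

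\textbf{Minor points.} Your containment function $\sum_i x_i\log(1+x_i)$ has gradient of order $\log x$, so $e^{\langle\nabla\Upsilon,\gamma_x\rangle}$ grows polynomially; this works only under extra mass-action structure (rates independent of produced species) that is not part of the stated assumptions. The paper instead takes $\Upsilon(x)=\tfrac12\log(1+|x|^2)$, whose gradient is bounded, so that $e^{\langle\nabla\Upsilon,\gamma_x\rangle}-1=O(|x|^{-1})$ compensates linear growth of the rates. Finally, your recovery sequence for \ref{item:assumption:Gamma-convergence} via Perron eigenvectors is unnecessary: since $F_M$ is finite, for each fixed $\varphi$ the map $(x,p)\mapsto\int e^{-\varphi}L_{x,p}e^\varphi\,d\theta$ is continuous, and taking $\theta_n=\theta$ together with a near-optimal fixed $\varphi$ already gives the $\limsup$ bound, as the paper does.
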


\begin{proof}
To prove the comparison principle we firstly mention that $\cH(x,p)$ is of the form \eqref{eq:results:variational_hamiltonian} with $\Theta = \mathcal{P}(F_M)$ and
 \begin{equation}\label{ex:Lambda}
\Lambda(x,p,\theta)= \int_{F_M} V(y;x,p) \, d\theta(y),    
 \end{equation}
 and 
 \begin{align}\label{ex:I}
 \cI(x,p,\theta)& = - \inf_{u\in C^2(F_M), \inf u>0} \int_{F_M} \frac{ L_{x,p} u } {u}\,d\theta= - \inf_{\varphi \in C^2(F_M)} \int_{F_M} e^{-\varphi} L_{x,p}e^{\varphi}\,d\theta.
 \end{align}
  We can then apply Theorem \ref{theorem:comparison_principle_variational} to show the comparison principle.
  In the following we verify Assumption \ref{assumption:regularity:Lambda-I}.
\begin{enumerate}[(I)]
    \item The function $p\mapsto \cH(x,p)$ is convex. Moreover, note that $V_{x,0}=0$. Hence, 
   $$\cH(x,0)= - \inf_\theta \cI(x,p,\theta) = 0,$$
   being $\cI \geq 0$ (see \cite{DoVa75a}) and there exists a measure $\theta^0_{x,p}$ such that $\cI(x,p,\theta^0_{x,p})=0$, due to Assumption \ref{assumption:irreducibility} (see \cite{Kl14} Theorem 17.51).
    \item $\theta \mapsto \Lambda(x,p,\theta)-\cI(x,p,\theta) $ is bounded for every $p$ and $x$ being a continuous function over a compact set.
    \item $\Upsilon(x)= \frac{1}{2} \log(1+ \sum_{i=1}^l x_i^2)$ is a containment function since the functions $$(r(x,y,\gamma))\left(e^{\frac{x}{x^2+1}\gamma_x}-1\right),$$ 
    $$e^{-\varphi}r(x,y,\gamma)e^{\frac{x}{1+x^2}\gamma_x}e^\varphi$$ and $$e^{-\varphi}r(x,y,\gamma)e^{\varphi}$$ are bounded for every $\gamma\in\Gamma$,  every $\varphi \in C^2(F_M)$ and every $(x,y)\in E\times F_M$.
    \item Let $(x,p)\in E \times\R^l$. The function $V(\cdot,x,p)$ is continuous and hence $(x,p,\theta)\mapsto\Lambda(x,p,\theta)$ is continuous. Moreover, $\cI$ is lower semicontinuous, as the supremum over continuous functions. Then, $\cI - \Lambda$ is lower semicontinuous and the first property of Definition \ref{def:Gamma-convergence} follows. We prove now that if $x_n \to x$ and $p_n \to p$ and for all $\theta \in \Theta$, there are $\theta_n$ such that $\theta_n \to \theta$ and 
    \begin{equation}\label{eq:gammaconv}
    \limsup_n \cI (x_n, p_n, \theta_n) \leq \cI(x,p,\theta).
    \end{equation}
    Then, the $\Gamma$-- convergence of $\cI - \Lambda$ will follow from \eqref{eq:gammaconv} and continuity of $\Lambda$. 

    For any $m \in \mathbb{N}$, there exists $\varphi_m \in C^2(F_M)$ such that 
    \begin{equation}
        \cI(x,p,\theta) \leq \int_{F_M} e^{-\varphi_m} L_{x,p} e^{\varphi_m} \, d\theta + \frac{1}{m}.
    \end{equation}
    Then, taking into account the continuity of $L_{x,p}$ and choosing $\theta_n = \theta$ for every $n$, we get
    \begin{equation}
        \limsup_n \cI(x_n,p_n,\theta_n) \leq \int_{F_M} e^{-\varphi_m} L_{x,p} e^{\varphi_m} \, d\theta + \frac{1}{m}.
    \end{equation}
    By letting $m$ to infinity we obtain \eqref{eq:gammaconv}.
    \item As $\Theta$ is compact, any closed subset of $\Theta$ is compact.
    \item As explained above, there exists a measure $\theta^0_{x,p}$ such that $\cI(x,p,\theta^0_{x,p})=0$. Then, $\cI(x,p,\theta^0_{x,p}) - \Lambda(x,p,\theta^0_{x,p}) \leq - \Lambda(x,p,\theta^0_{x,p})$. Taking $g(x,p)= - \Lambda(x,p,\theta^0_{x,p} )$, $\phi_{g}(x,p)$ is not empty, as $\theta_{x,p}^0 \in \phi_{g}(x,p)$.
    \item Let $(x_{\alpha,\eps},y_{\alpha,\eps},\theta_{\alpha,\eps})$ be a fundamental sequence as in Definition \ref{def:results:continuity_estimate}. Set $p_{\alpha,\eps} = \alpha (x_{\alpha,\eps} - y_{\alpha,\eps})$. We aim to show
    \begin{align}
       & \liminf_{\alpha \rightarrow \infty} (\Lambda - \cI)\left(x_{\alpha,\eps}, p_{\alpha,\eps}, \theta_{\alpha,\eps}\right) - (\Lambda-\cI)\left(y_{\alpha,\varepsilon},p_{\alpha,\eps},\theta_{\alpha,\eps}\right)\leq 0.
    \end{align}
  By the definition of $\Lambda$ and $\cI$ in \eqref{hamiltonian_example}, the difference above is of the type
  \begin{align}\label{eq:fundamentalseq}
  &\int_{F_M} \sum_{\gamma\in\Gamma_1 \cup \Gamma_2} \left(r(x_{\alpha,\eps},z,\gamma)-r(y_{\alpha,\eps},z,\gamma)\right)\left(e^{\langle p_{\alpha,\eps}, \gamma_x \rangle}-1 \right)\, d\theta +\\
  & \inf_{\varphi \in C^2(F_M)} \int_{F_M} e^{-\varphi} \left(\sum_{\gamma\in\Gamma_2} r(x_{\alpha,\eps},z,\gamma)-r(y_{\alpha,\eps},z,\gamma)\right) e^{\langle p_{\alpha,\eps}, \gamma_x \rangle} e^{\varphi}+\\
  & e^{-\varphi}\left(\sum_{\gamma\in \Gamma_3} r(x_{\alpha,\eps},z,\gamma) - r(y_{\alpha,\eps},z,\gamma)\right) e^{\varphi}\, d\theta.
  \end{align}
  Note that if $r(x,y,\gamma)$ is constant in $x$, the difference above is zero. Hence, we only take into account the  parameters $\gamma$ such that $r$ depends on $x$. 
  
  Moreover, by the upper bound \ref{item:def:continuity_estimate:3} in Definition \ref{def:results:continuity_estimate}, we find that there is some $\alpha(\eps)$ such that 
    \begin{equation}\label{eq:boundsupersolution}
        \sup_{\alpha\geq \alpha(\eps)} \Lambda(y_{\alpha,\eps}, p_{\alpha,\eps},\theta_{\alpha,\eps}) - \cI (y_{\alpha,\eps}, p_{\alpha,\eps},\theta_{\alpha,\eps}) <\infty.
    \end{equation}
    If $\lim_{\alpha} r(y_{\alpha,\eps},z,\gamma) > 0 $ for all $\gamma$, we can conclude by the bound \eqref{eq:boundsupersolution} that $e^{\langle p_{\alpha,\eps}, \gamma_x \rangle}$ is bounded. 
  Then, by property \ref{item:def:continuity_estimate:2} of Definition \ref{def:results:continuity_estimate} and continuity of the rates, \eqref{eq:fundamentalseq} converges to $0$ for $\alpha \to \infty$. 

  Consider now all terms $\gamma$ such that $r(y_{\alpha,\eps},z,\gamma) \to 0$ as $\alpha \to \infty$. 
Firstly note that, by Assumption \ref{assumption:inner_product}, \eqref{eq:fundamentalseq} equal to 
\begin{align}\label{eq:fundamentalseq_two}
    &\int_{F_M} \sum_{\gamma\in\Gamma_1 \cup \Gamma_2} \left(\phi^{z,\gamma}_1 (x_{\alpha,\eps})\phi^{z,\gamma}_2(x_{\alpha,\eps}) - \phi^{z,\gamma}_1 (y_{\alpha,\eps})\phi^{z,\gamma}_2(y_{\alpha,\eps})\right) \left(e^{\langle p_{\alpha,\eps}, \gamma_x \rangle}-1 \right)\, d\theta +\\
  & \inf_{\varphi \in C^2(F_M)} \int_{F_M} e^{-\varphi} \left(\sum_{\gamma\in\Gamma_2}  \left(\phi^{z,\gamma}_1 (x_{\alpha,\eps})\phi^{z,\gamma}_2(x_{\alpha,\eps}) - \phi^{z,\gamma}_1 (y_{\alpha,\eps})\phi^{z,\gamma}_2(y_{\alpha,\eps})\right) e^{\langle p_{\alpha,\eps}, \gamma_x \rangle} \right)e^{\varphi}+\\
  & e^{-\varphi}\left(\sum_{\gamma\in \Gamma_3} r(x_{\alpha,\eps},z,\gamma) - r(y_{\alpha,\eps},z,\gamma)\right) e^{\varphi}\, d\theta.
\end{align}

The last line converges to $0$ for $\alpha \to \infty$ by the continuity of the rates. 

If $\langle p_{\alpha,\eps} , \gamma_x \rangle <0$, $e^{\langle p_{\alpha,\eps} , \gamma_x \rangle}$ is bounded and the first two lines also converge to $0$ by continuity of the rates.

Consider the case $\langle p_{\alpha,\eps} , \gamma_x \rangle >0$. Then, by Assumption \ref{assumption:inner_product}, $\phi_1(y_{\alpha,\eps}) > \phi_1(x_{\alpha,\eps}) \geq 0$ and $ \phi_2 (y_{\alpha,\eps}) > 0$. Then, we can write the first two lines of \eqref{eq:fundamentalseq_two} as
\begin{align}
    &\int_{F_M} \sum_{\gamma\in\Gamma_1 \cup \Gamma_2} \underbrace{\left(\frac{\phi^{z,\gamma}_1(x_{\alpha,\eps}) \phi^{z,\gamma}_2 (x_{\alpha,\eps})}{\phi^{z,\gamma}_1(y_{\alpha,\eps}) \phi^{z,\gamma}_2 (y_{\alpha,\eps})} - 1 \right)}_{(1)}\underbrace{\left( \phi^{z,\gamma}_1(y_{\alpha,\eps}) \phi^{z,\gamma}_2 (y_{\alpha,\eps}) \right) \left(e^{\langle p_{\alpha,\eps}, \gamma_x \rangle}-1 \right)}_{(2)}\, d\theta +\\
  & \inf_{\varphi \in C^2(F_M)} \int_{F_M} e^{-\varphi} \left(\sum_{\gamma\in\Gamma_2}  \underbrace{\left(\frac{\phi_1^{z,\gamma}(x_{\alpha,\eps}) \phi^{z,\gamma}_2 (x_{\alpha,\eps})}{\phi_1^{z,\gamma}(y_{\alpha,\eps}) \phi^{z,\gamma}_2 (y_{\alpha,\eps})} - 1 \right)}_{(3)}\underbrace{\left( \phi^{z,\gamma}_1(y_{\alpha,\eps})\phi^{z,\gamma}_2(y_{\alpha,\eps}) \right) e^{\langle p_{\alpha,\eps}, \gamma_x \rangle}}_{(4)} \right)e^{\varphi}
\end{align}
For $\alpha \to \infty$, by Assumption \ref{assumption:inner_product} (2), $(1)$ and $(3)$ are negative and $(2)$ and $(4)$ are positive and bounded by \eqref{eq:boundsupersolution}. Then, for $\alpha$ big the second and third lines of \eqref{eq:fundamentalseq} are bounded above from zero and this concludes the proof.

\end{enumerate}
\end{proof}

\begin{proposition}[Existence of viscosity solutions]
    Consider $\cH(x,p)$ as in \eqref{hamiltonian_example}. Suppose Assumptions \ref{assumption:irreducibility} and \ref{assumption:inner_product}. Then, the function $R(\lambda)h$ defined in \eqref{resolvent} is the unique viscosity solution to $f-\lambda \bfH f = h$.
\end{proposition}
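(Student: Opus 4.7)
The plan is to invoke Theorem~\ref{theorem:existence_of_viscosity_solution} directly: that theorem yields the desired viscosity solution as soon as Assumptions~\ref{assumption:regularity:Lambda-I} and~\ref{assumption:Hamiltonian_vector_field} are verified, and the uniqueness claim in the proposition then follows from the comparison principle already established in Theorem~\ref{theorem:comparison_example}. Assumption~\ref{assumption:regularity:Lambda-I} was in fact verified during the proof of Theorem~\ref{theorem:comparison_example}, so the only new work is to check Assumption~\ref{assumption:Hamiltonian_vector_field}. Since $E = [0,\infty)^l$ is closed and convex but not open, I will check condition (b): $\partial_p \cH(x,p) \cap T_E(x) \neq \varnothing$ for every $x \in E$ and $p \in \bR^l$, where $T_E(x) = \{z \in \bR^l : z_i \geq 0 \text{ whenever } x_i = 0\}$.

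To exhibit an element of the subdifferential I would apply Danskin's envelope formula to the variational representation~\eqref{hamiltonian_example}. First I pick an optimizing measure $\theta^\ast$, which exists because $\mathcal{P}(F_M)$ is compact and the integrand is upper semi-continuous in $\theta$, and an optimizer $\varphi^\ast$ of the inner infimum, which by Assumption~\ref{assumption:irreducibility} and Perron--Frobenius may be taken as the logarithm of the principal positive eigenfunction of $V(\cdot;x,p) + L_{x,p}$. Differentiating the integrand in $p$ at $(\theta^\ast,\varphi^\ast)$ and collapsing the $\Lambda$ and $\cI$ contributions via the algebraic identity $1 + e^{-\varphi^\ast(y)}\bigl(e^{\varphi^\ast(y+\gamma_y)} - e^{\varphi^\ast(y)}\bigr) = e^{\varphi^\ast(y+\gamma_y) - \varphi^\ast(y)}$ is expected to produce the candidate
\begin{equation*}
  \xi(x,p) \;=\; \int_{F_M} \sum_{\gamma \in \Gamma_1 \cup \Gamma_2} r(x,y,\gamma)\, \gamma_x\, e^{\langle p, \gamma_x \rangle}\, e^{\varphi^\ast(y+\gamma_y) - \varphi^\ast(y)}\, d\theta^\ast(y) \;\in\; \partial_p \cH(x,p),
\end{equation*}
in which each scalar coefficient multiplying the vector $\gamma_x$ is non-negative.

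The remaining step is to verify $\xi(x,p) \in T_E(x)$. I will fix any index $i$ with $x_i = 0$ and argue that reactions with $\gamma_{x_i} < 0$ contribute zero: by Assumption~\ref{assumption:inner_product}(3), $\phi^{y,\gamma}_1(x) = 0$ in that case, so the rate $r(x,y,\gamma) = \phi^{y,\gamma}_1(x)\phi^{y,\gamma}_2(x)$ vanishes. Hence only $\gamma$ with $\gamma_{x_i} \geq 0$ contribute to the $i$-th component of $\xi(x,p)$, and since all remaining factors are non-negative we conclude $\xi_i(x,p) \geq 0$. This places $\xi(x,p)$ in $T_E(x)$ and thereby verifies Assumption~\ref{assumption:Hamiltonian_vector_field}(b).

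The main obstacle I anticipate is the rigorous handling of the envelope identity when the inner infimum in~\eqref{ex:I} is a Donsker--Varadhan functional that need not be attained in the class $C^2(F_M)$. The plan is to circumvent this by working with the Perron--Frobenius eigenfunction guaranteed by Assumption~\ref{assumption:irreducibility} (note $F_M$ is finite, so the eigenvalue problem is finite-dimensional and smooth in $p$), which plays the role of $e^{\varphi^\ast}$ and allows one to differentiate the parametrized eigenvalue in place of the infimum. Once this identification is in place the argument reduces to the routine sign check above, exploiting the structural condition built into Assumption~\ref{assumption:inner_product}(3).
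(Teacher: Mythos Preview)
Your proposal is correct and follows the same route as the paper: both reduce to verifying Assumption~\ref{assumption:Hamiltonian_vector_field}(b), exhibit an element of $\partial_p\cH(x,p)$ coming from the inner integrand $\Psi_{x,\varphi,\theta}(p)=\int_{F_M}V_{x,p}+e^{-\varphi}L_{x,p}e^{\varphi}\,d\theta$, and then check $\xi_i\ge 0$ whenever $x_i=0$ using Assumption~\ref{assumption:inner_product}(3). The only tactical difference is that the paper argues the inclusion $\partial_p\Psi_{x,\varphi,\theta}(p)\subseteq\partial_p\cH(x,p)$ by a short chain of inequalities exploiting the $\sup$--$\inf$ structure, whereas you identify $\xi$ as the derivative of the Perron--Frobenius eigenvalue; both land on the same formula and the same coordinatewise sign check.
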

\begin{proof}
    We show that $\cH(x,p)$ in \eqref{hamiltonian_example} verifies Assumption \ref{assumption:Hamiltonian_vector_field}. Then, the result follows from Theorem \ref{theorem:existence_of_viscosity_solution}. We need to show that $\partial_p \cH(x,p) \cap T_E(x) \neq \emptyset$ for all $x\in E$.
We prove this in two steps: 
\begin{enumerate}
    \item We firstly show that for an Hamiltonian of the type \eqref{hamiltonian_example}, $\partial_p \left[\int_{F_M} V_{x,p} + e^{-\varphi} L_{x,p} e^{\varphi} \, d\theta \right] \subseteq \partial_p \cH(x,p)$;
    \item Secondly, we show that $\partial_p \left[\int_{F_M} V_{x,p} + e^{-\varphi} L_{x,p} e^{\varphi} \, d\theta \right] \cap T_E(x) \neq \emptyset $.
\end{enumerate}
\textit{Proof of step 1.} 
We call $\Psi_{x,\varphi,\theta} (p) = \int_{F_M} V_{x,p} + e^{-\varphi} L_{x,p} e^{\varphi} \, d\theta$. 

Firstly, recall that for a general convex function  $p \mapsto \Phi(p)$ we denote  $$\partial_p \Phi (p_0) = \{ \xi \in \R^l \, : \, \Phi(p) \geq \Phi(p_0) + (p-p_0)\cdot \xi  \quad \forall p\in \R^l\}.$$ 
Let $\xi \in \partial_p \Psi_{x,\varphi,\theta} (p)$ and $q \in \R^l$.
We call $\varphi_q$ the optimal map for $\cH(x,q)$ and $\theta_p$ the optimal measure for $\cH(x,p)$. Then we have
\begin{align}
    \cH(x,q) & \geq \int_{F_M} V_{x,q} + e^{-\varphi_q}L_{x,q} e^{\varphi_q} \, d\theta_p\\
    &\geq \int_{F_M} V_{x,p} + e^{-\varphi_q}L_{x,p} e^{\varphi_q} \, d\theta_p + (q-p)\cdot \xi \\
    &\geq \inf_\varphi \int_{F_M} V_{x,p} + e^{-\varphi}L_{x,p} e^{\varphi} \, d\theta_p + (q-p)\cdot \xi \\
    &= \cH(x,p) + (q-p) \cdot \xi
\end{align}
showing that $\xi \in \partial_p \cH(x,p)$ and hence that $\partial_p \Psi_{x,\varphi,\theta}(p)\subseteq \partial_p \cH(x,p)$.

\textit{Proof of step 2.} We claim that
      $\partial_p \Psi_{x,\theta,\varphi}(p) \cap T_E(x)$ is not empty for every $x\in E$. 
      Indeed, note that 
      \begin{equation}
          T_{[0,\infty)^l}(x) = \Pi_{i=1}^l T_i,
      \end{equation}
      with 
      \begin{equation}
      T_i =
      \begin{cases}
          \R \qquad &\text{if $x_i \neq 0$,}\\
           [0,\infty) \qquad &\text{if $x_i = 0$.}
     \end{cases}
    \end{equation}  
      Then, if $x_i \neq 0$, $\partial_{p_i} \Psi _{x_i, \varphi, \theta} (p) \subseteq T_i$ trivially. 

      If $x_i = 0$, by assumption \ref{assumption:inner_product}(3), if $\gamma_{x_i} <0$, $r(x,y,\gamma) = 0$. Hence, we can conclude that $\partial_{p_i} \Psi_{x_i, \varphi, \theta} (p) \geq 0$. And this conclude the proof.
\end{proof}

\subsection{Construction of the Hamiltonian with a Large deviations approach}\label{subsection:example_Hamiltonian}
In this subsection we show how the Hamiltonian in \eqref{hamiltonian_example} can be obtained from the study of the multi-scale Markov process arising in biological systems described above. 
In a large deviations context the aim is to characterize the function $I:C_E [0,\infty) \to [0,\infty]$, with $C_E [0,\infty)$ the space of all continuous path $x: [0,\infty) \to E$, such that 
\begin{equation}
    \mathbb{P}\left[X_N \approx x \right] \sim e^{-N I(x)}.
\end{equation}
In Chapter 8 of \cite{FK06}, it is shown that the function $I$ is characterized by the unique solution of the Hamilton-Jacobi equation
\begin{equation}
    f-\lambda \bfH f = h,
\end{equation}
with $\bfH f(x) = \cH(x,\nabla f(x))$ found in three steps:
\begin{enumerate}
    \item Given the \textit{generator} $A_N$ of the process $(X_N,I_N)$, define the \textit{non linear generator} $H_N f = \frac{1}{N}e^{-Nf}A_N e^{Nf}$ for $f$ such that $e^{Nf}\in D(A_N)$;
    \item Given $f\in C(E)$ and $h \in C(E\times F_M)$ define $f_N(x,y) = f(x) + N^{-1}h(x,y)$ and find $H_{h}$ such that $\lim_{N\to \infty} H_N f_N(x,y) = H_{h}(x,\nabla f(x), y)$;
    \item \label{item: FengKurtz_eigenvalue}For every $x$, find $h_x$ such that $H_{h_x}$ does not depend on $y$. Calling $p= \nabla f(x)$, this is equivalent to solve an eigenvalue problem : for all $x\in E$ and $p\in \R^l$, there exists $\cH(x,p)$ and $\bar{h}$ such that $H (x,p,y) \bar{h}(x,y) = \cH (x,p) \bar{h}(x,y)$.
\end{enumerate}

In this subsection, we show that in the analysis of the limit behaviour of the process $(X_N, Y_N)$, the three steps above give the Hamiltonian \eqref{hamiltonian_example} studied in the previous subsection.

In \cite{Po18}, the author studies the limit behaviour of a similar example following the method developed by Feng and Kurtz described in the steps above. However, in contrast with what it has been done in the above cited work, with our approach it is not necessary to find and calculate an explicit expression of the eigenvalue $\cH$ found in step \ref{item: FengKurtz_eigenvalue}. Therefore, our method presents an effective solution that can be applied to similar scenarios in which the eigenvalue problem can not be easily solved. 

We have the following result.
\begin{proposition}[Markov processes on multiple time-scale]\label{prop:multiple-time-Markov-processes}
    Consider the Markov process $(X_N,Y_N)$ having the operator \eqref{eq:generator_rescaled} as generator. Suppose Assumptions \ref{assumption:irreducibility} and \ref{assumption:inner_product}. Then, the following hold:
    \begin{enumerate}
        \item Let $f\in C(E)$ and $h\in C(E\times F_M)$. Define $f_N (x,y)= f(x) + N^{-1} h(x,y)$ and $H_N f =\frac{1}{N} e^{-Nf}A_N e^{Nf} $ provide $e^{Nf} \in D(A_N)$. Then,
        \begin{equation}
            \lim_N H_N f_N(x,y) = V(y; x, \nabla_xf(x))+ e^{-h(x,y)}L_{x,\nabla_xf(x)}e^{h(x,y)},
        \end{equation}
        with $$V(y;x,p) = \sum_{\gamma=(\gamma_x,\gamma_y)\in\Gamma_1} r(x,y,\gamma) (e^{\langle p, \gamma_x \rangle}-1) + \sum_{\gamma=(\gamma_x,\gamma_y)\in\Gamma_2} r(x,y,\gamma) (e^{\langle p , \gamma_x\rangle}-1),$$ and 
        \begin{align}
        L_{x,p}f(x,y) = &\sum_{\gamma=(\gamma_x,\gamma_y)\in\Gamma_2} r(x,y,\gamma) e^{\langle p, \gamma_x\rangle} [ f(x,y+\gamma_y)- f(x,y)]\\ 
        &+ \sum_{\gamma=(\gamma_x,\gamma_y)\in\Gamma_3} r(x,y,\gamma) [f(x,y+\gamma_y) - f(x,y)].
        \end{align}
        \item There exists a unique constant $\cH(x,p)$ and a unique function $g(x,y)$ solving the eigenvalue problem $(V(y;x,p) + L_{x,p})g(x,y)=\cH(x,p)g(x,y)$.
        \item Given the map $\bfH f = \cH(x,\nabla f(x))$, the Hamilton--Jacobi equation $f- \lambda \bfH f= h$ verifies the comparison principle.
    \end{enumerate}
\end{proposition}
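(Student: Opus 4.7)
The three items are essentially independent so I would tackle them one at a time. For item (1), my approach is a direct Taylor expansion. Writing $e^{Nf_N(x,y)} = e^{Nf(x)}e^{h(x,y)}$ and grouping the three sums in $A_N$ by $\Gamma_1,\Gamma_2,\Gamma_3$, each term of $\frac{1}{N}e^{-Nf_N}A_Ne^{Nf_N}$ becomes
\begin{equation*}
r(x,y,\gamma)\bigl[e^{N(f(x+N^{-1}\gamma_x)-f(x))\,+\,h(x+N^{-1}\gamma_x,y+\gamma_y)-h(x,y)}-1\bigr].
\end{equation*}
Using $f\in C^1$ and continuity of $h$, $N(f(x+N^{-1}\gamma_x)-f(x))\to\langle\nabla f(x),\gamma_x\rangle$ and $h(x+N^{-1}\gamma_x,\cdot)\to h(x,\cdot)$, so each limit can be read off. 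Summing over $\Gamma_1$ produces the first sum of $V(y;x,p)$ (since $\gamma_y=0$); summing over $\Gamma_3$ ($\gamma_x=0$) produces exactly $e^{-h}L_{x,p}e^{h}$ restricted to $\Gamma_3$; and summing over $\Gamma_2$ gives $r\bigl[e^{\langle p,\gamma_x\rangle}e^{h(x,y+\gamma_y)-h(x,y)}-1\bigr]$, which is algebraically the $\Gamma_2$-part of $V(y;x,p)$ plus the $\Gamma_2$-part of $e^{-h}L_{x,p}e^{h}$. So item (1) is just a careful bookkeeping.

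For item (2), the eigenvalue problem lives on the finite set $F_M$ thanks to the conservation law in Assumption~\ref{assumption:conservation}. The operator $V(\cdot;x,p)+L_{x,p}$ is a finite matrix whose off-diagonal entries are the weighted jump rates in $\Gamma_2\cup\Gamma_3$, all non-negative; Assumption~\ref{assumption:irreducibility} says the underlying jump graph is irreducible. I would therefore invoke the Perron–Frobenius theorem (applied to $V+L+cI$ for $c$ large enough to make the matrix entrywise non-negative) to obtain a simple, real, dominant eigenvalue $\cH(x,p)$ with a strictly positive eigenvector $g(x,y)$ unique up to scalar multiple. Continuity of $\cH(x,p)$ follows from continuity of the rates and the simplicity of the eigenvalue. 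To match this $\cH(x,p)$ with the variational expression \eqref{hamiltonian_example} used throughout Section~\ref{section:verification-for-examples-of-Hamiltonians}, I would apply the Donsker–Varadhan variational formula for the principal eigenvalue of $V+L$, namely
\begin{equation*}
\cH(x,p)=\sup_{\theta\in\cP(F_M)}\Bigl[\int_{F_M}V(y;x,p)\,d\theta(y)-\cI_{\mathrm{DV}}(x,p,\theta)\Bigr],
\end{equation*}
with $\cI_{\mathrm{DV}}$ exactly the functional defined in \eqref{ex:I}. This gives the representation \eqref{hamiltonian_example}.

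Item (3) then reduces to citing Theorem~\ref{theorem:comparison_example}: once $\cH$ is identified with \eqref{hamiltonian_example}, Assumptions~\ref{assumption:irreducibility} and~\ref{assumption:inner_product} are already in force, so the comparison principle follows directly.

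The main obstacle in my view is not item (1) (a Taylor expansion) nor item (3) (an application of the previous theorem), but item (2): strictly speaking one must check that the matrix is irreducible in the Perron–Frobenius sense for every $x$ and $p$ (not only the generator $L_{x,p}$ at $p=0$), and then argue that the variational representation one obtains from Donsker–Varadhan coincides with the infimum over $\varphi\in C^2(F_M)$ appearing in \eqref{ex:I}. The passage from $\inf_{u>0}\int(Lu/u)\,d\theta$ to $\inf_{\varphi}\int e^{-\varphi}Le^{\varphi}\,d\theta$ is the substitution $u=e^{\varphi}$, and since $F_M$ is finite this substitution is unproblematic; nevertheless this is the step where I would be most careful to check signs and conventions.
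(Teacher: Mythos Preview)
Your proposal is correct and follows essentially the same route as the paper's proof sketch: item (1) by direct expansion of $H_N f_N$ and reading off the limit, item (2) by Perron--Frobenius on the finite set $F_M$ together with the Donsker--Varadhan variational representation (the paper simply cites \cite{DoVa75a} for this), and item (3) by invoking Theorem~\ref{theorem:comparison_example}. If anything, your write-up is more detailed than the paper's, which leaves the shift-by-$cI$ trick and the identification $u=e^{\varphi}$ implicit; the concern you flag about irreducibility for all $p$ is legitimate but is handled by noting that the $e^{\langle p,\gamma_x\rangle}$ factors are strictly positive and hence do not destroy irreducibility of the underlying jump graph.
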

\begin{proof}[Proof sketch.]
Recalling the generator $A_N$ in \eqref{eq:multiple-time: generator_extended}, note that the exponential generator $H_N$ acting on the test functions $f_N$ are
\begin{align}
    H_Nf_N(x,y)=& \sum_{\gamma \in \Gamma_1} r(x,y,\gamma) \left[ e^{N\left(f\left(x+\frac{1}{N}\gamma_x\right) - f(x)\right)+h\left(x+\frac{1}{N}\gamma_x,y\right)- h(x,y)}-1\right]\\
    &+ \sum_{\gamma\in\Gamma_2} r(x,y,\gamma) \left[e^{N\left(f\left(x+\frac{1}{N}\gamma_x\right)- f(x)\right)+h\left(x+\frac{1}{N}\gamma_x,y+\gamma_y\right)-h(x,y)}-1\right]\\
    &+\sum_{\gamma\in\Gamma_3} r (x,y,\gamma) \left[e^{h(x,y+\gamma_y)-h(x,y)}-1\right].
\end{align}
As a consequence, its limit is
\begin{align}
    \lim_N H_N f_N (x,y) = &\sum_{\gamma\in\Gamma_1} r(x,y,\gamma) [e^{\langle \nabla f(x), \gamma_x\rangle} - 1] + \sum_{\gamma\in\Gamma_2} r(x,y,\gamma) [e^{\langle \nabla f(x), \gamma_x\rangle} e^{h(x,y+\gamma_y)- h(x,y)}-1]\\
    & \sum_{\gamma\in\Gamma_3} r(x,y,\gamma) [ e^ {h(x,y+\gamma_y)- h(x,y)}-1] 
\end{align}
and the first claim is proven.

The second point follows from Assumption \ref{assumption:irreducibility} and Perron -- Frobenius Theorem. 

The third point follows from the fact that, being the eigenvalue of $V_{x,p} + L_{x,p}$, $\cH(x,p)$ is the Hamiltonian \eqref{hamiltonian_example} and from Theorem \ref{theorem:comparison_example} (see \cite{DoVa75a} for more details about the representation of the eigenvalue).
\end{proof}
\printbibliography
\end{document}